\DeclareFontFamily{U}{mathx}{}
\DeclareFontShape{U}{mathx}{m}{n}{<-> mathx10}{}
\DeclareSymbolFont{mathx}{U}{mathx}{m}{n}
\DeclareMathAccent{\widehat}{0}{mathx}{"70}
\DeclareMathAccent{\widecheck}{0}{mathx}{"71}
\DeclareMathOperator{\Mod}{Mod}
\DeclareMathOperator{\Aut}{Aut}
\DeclareMathOperator{\Hom}{Hom}
\DeclareMathOperator{\sys}{sys}
\DeclareMathOperator{\diam}{diam}
\DeclareMathOperator{\Int}{\iota}
\DeclareMathOperator{\Isom}{Isom}
\DeclareMathOperator{\Bers}{Bers}
\DeclareMathOperator{\Stab}{Stab}
\DeclareMathOperator{\Kob}{Kob}
\DeclareMathOperator{\MO}{\textbf{MO}}
\NewDocumentCommand\vvec{mg}{\overrightarrow{#1}}
\titleformat{\subsection}[runin]
  {\normalfont\bfseries}
  {\thesubsection}
  {0.5em}
  {}
  [.]
\theoremstyle{definition}
\newtheorem*{defx*}{Definition}
\newtheorem{definition}{Definition}[section]
\newtheorem{example}[definition]{Example}
\theoremstyle{plain}
\newtheorem{thmx}{Theorem}
\newtheorem{theorem}[definition]{Theorem}
\newtheorem{lemma}[definition]{Lemma}
\newtheorem{corollary}[definition]{Corollary}
\newtheorem{criterion}[definition]{Criterion}
\newtheorem{proposition}[definition]{Proposition}
\theoremstyle{remark}
\newtheorem{remark}[definition]{Remark}
\newcommand{\C}{\mathbb{C}}
\newcommand{\R}{\mathbb{R}}
\newcommand{\Z}{\mathbb{Z}}
\newcommand{\Hyperbolic}{\mathbb{H}}
\newcommand{\Teich}{\mathcal{T}}
\newcommand{\RNum}[1]{\lowercase\expandafter{\romannumeral #1\relax}}
\newcommand{\URNum}[1]{\uppercase\expandafter{\romannumeral #1\relax}}
\DeclareMathOperator{\arcsinh}{arcsinh}
\title{Holomorphic Curves in Moduli Spaces\\Are Quasi-Isometrically Immersed}
\author{
  \Large{Yibo Zhang}
  \footnote{
    This work is supported by the French National Research Agency in the framework of the \guillemotleft France 2030 \guillemotright program (ANR-15-IDEX-0002) and by the LabEx PERSYVAL-Lab (ANR-11-LABX-0025-01).
  }\\
\small{\emph{Institut Fourier, UMR 5582, Laboratoire de Mathématiques}}\\
\small{\emph{Université Grenoble Alpes, CS 40700, 38058 Grenoble cedex 9, France}}\\
\small{\emph{email:}} \tt{yibo.zhang@univ-grenoble-alpes.fr}
}
\date{}
\begin{document}


\maketitle

\begin{abstract}

A \emph{holomorphic curve} in moduli spaces is the image of 
a non-constant holomorphic map 
from a cusped hyperbolic surface $B$ of type $(g,n)$ to the moduli space $\mathcal{M}_h$ of closed Riemann surfaces of genus $h$.
We explore the relationship between the holomorphicity and the Teichm\"uller distance.
Our result shows that,
when all peripheral monodromies are of infinite order,
the holomorphic map is a quasi-isometric immersion,
with parameters depending only on $g$, $n$, $h$ and the systole of $B$.
Moreover, under an additional condition on the peripheral monodromies,
the lifting embeds a fundamental domain of the hyperbolic surface $B$ into the Teichm\"uller space as a quasi-isometric embedding.
\end{abstract}


\section{Introduction}

Let $\Teich_h$ be the Teichm\"uller space parameterizing complex structures on a closed, oriented smooth surface $\Sigma_h$ of genus $h\ge 2$, up to isotopy.
This space carries a complex structure induced by an embedding $\Teich_h\hookrightarrow \C^{3h-3}$, due to Bers \cite{Bers1970} and Maskit \cite{Maskit1970}.
The Teichm\"uller distance on $\Teich_h$ will be denoted by $d_{\Teich}$ in the sequel (see Subsection \ref{subsection::Teichmuller} for the definition).

The mapping class group $\Mod_h$ consists of all orientation-preserving diffeomorphisms of $\Sigma_h$ up to isotopy.
This group acts properly discontinuously on $\Teich_h$ and the quotient space is the moduli space $\mathcal{M}_h$.
Each mapping class in $\Mod_h$ is a holomorphic automorphism of $\Teich_h$
and an isometry for the Teichm\"uller distance.

A \emph{holomorphic disc} in $\Teich_h$ is the image of a holomorphic map $\widetilde{F}:\Hyperbolic^2\rightarrow \Teich_h$ from the upper half plane $\Hyperbolic^2\subset \C$ to the Teichm\"uller space.
The hyperbolic plane is endowed with the usual complex structure making it biholomorphic to the open unit disc.
Passing to the quotient we obtain a map $\mathring{F}:\Hyperbolic^2\rightarrow\mathcal{M}_h$ to the moduli space.
Let $\Gamma\le \Stab(\mathring{F})\coloneqq \{\phi\in \Aut(\Hyperbolic^2) \mid \mathring{F} \circ \phi=\mathring{F} \}$ be a lattice and set $B\coloneqq \Gamma \backslash \Hyperbolic^2$.
Suppose that $B$ is an oriented surface of genus $g$ with $n$ cusps (without boundary).
The image of the quotient map $F:B\rightarrow\mathcal{M}_h$ is called a \emph{holomorphic curve} of type $(g,n)$ in $\mathcal{M}_h$.

Farb and Masur \cite{Farb-Masur-TeichmullerGeom1,Farb-Masur-TeichmullerGeom2} established a connection between the coarse geometric structure and the analytic structure of the moduli space. In this paper, we investigate the holomorphicity of the immersed curve $F(B) \subset \mathcal{M}_h$ in relation to the Teichm\"uller distance.

We denote the hyperbolic distance on $B$ by $d_B$. 
The \emph{systole} of $B$, denoted by $\sys(B)$, is the length of the shortest essential closed curve, where \emph{essential} means non-contractible and non-peripheral.
A \emph{cusp region} of $B$, usually denoted by $U$, is a neighbourhood of a cusp bounded by a horocycle of length $2$.

As in \cite[p.212]{ImayoshiShiga1988} and \cite[p.176]{moishezon1977complex}, a holomorphic map $F:B\rightarrow\mathcal{M}_h$ induces a group homomorphism $F_*:\pi_1(B,t) \rightarrow \Mod_h$ which is called a \emph{monodromy homomorphism} of $F$ (see Subsection \ref{subsection::monodromy} for the definition).
The image $F_*([\gamma])$ along a based loop $\gamma\subset B$ that goes once or several times around a cusp, clockwise or counterclockwise, is called a \emph{peripheral monodromy} of the cusp.

Both $\Hyperbolic^2$ and $\Teich_h$ are complex manifolds and Kobayashi hyperbolic, i.e., 
they are endowed with the intrinsic \emph{Kobayashi norms}.
The distance induced by the Kobayashi norms is called the \emph{Kobayashi distance}.
The Kobayashi norm on $\Hyperbolic^2$ coincides with the norm of the Poincar\'e Riemannian metric or half the hyperbolic metric.
The Kobayashi distance on $\Hyperbolic^2$ coincides with half the hyperbolic distance, i.e.,
$d_{\Hyperbolic^2,\Kob} = \frac{1}{2}d_{\Hyperbolic^2}$ (see, e.g., \cite[Proposition 2.3.4]{Abate}).
On the other hand, the Kobayashi norm $\Kob_{\Teich}$ on $\Teich_h$ is a Finsler metric and the Kobayashi distance on $\Teich_h$ coincides with the Teichm\"uller distance, i.e., $d_{\Teich,\Kob} = d_{\Teich}$ (see \cite[Theorem 3]{Royden1971}).

Consider the distance $d_{\widetilde{F}}$ on $\Hyperbolic^2$ induced by the pullback norm $\widetilde{F}^* \Kob_{\Teich}$.
The quotient metric of $d_{\widetilde{F}}$ on $B$ is denoted by $d_F$,
which measures the distance between $b_1$ and $b_2$
as the infimum length of paths on the image surface $F(B)$ connecting $F(b_1)$ and $F(b_2)$.
The quotient Teichm\"uller distance on $\mathcal{M}_h$ is denoted by $d_{\mathcal{M}}$.
Here, the distance $d_\mathcal{M}(F(b_1),F(b_2))$ measures the shortest path in $\mathcal{M}_h$ joining $F(b_1)$ and $F(b_2)$, which, in general, does not lie on the image surface $F(B)$.
The holomorphic map $F:B\rightarrow\mathcal{M}_h$
is an \emph{isometric immersion}
if the Finsler norm on the left equals the pullback norm or, equivalently, the equality $(1/2)d_B = d_F$ holds.

To analyse the rigidity of
the maps \( F: B \to \mathcal{M}_h \)
and
\( \widetilde{F}: \mathbb{H}^2 \to \mathcal{T}_h \),
we first recall the classical definition of a \emph{quasi-isometric embedding}.
Furthermore, we introduce the notion of a \emph{quasi-isometric immersion},
which is a quasi-version of the relation $(1/2)d_B = d_F$,
which provides a framework for examining the rigidity of \( F: B \to \mathcal{M}_h \).

\begin{definition}
Let $(X_1,d_1)$ and $(X_2,d_2)$ be metric spaces.
Given $\lambda\ge 1$ and $\epsilon\ge 0$,
a map $f:X_1\rightarrow X_2$ is called a $(\lambda,\epsilon)$-\emph{quasi-isometric embedding} if
\begin{equation*}
    d_1(x,y)/\lambda-\epsilon
    \le
    d_2(f(x),f(y))
    \le
    \lambda d_1(x,y)+\epsilon
\end{equation*}
for all $x,y\in X_1$.
In particular, when $f$ is a $(1,0)$-quasi-isometric embedding, we say that $f$ is an \emph{isometric embedding}.
\end{definition}

\begin{definition}
Given $\lambda\ge 1$ and $\epsilon\ge 0$,
a holomorphic map $F:B\rightarrow\mathcal{M}_h$ is called a $(\lambda,\epsilon)$-\emph{quasi-isometric immersion} if
\begin{equation*}
(1/2)d_B(b_1,b_2)/\lambda-\epsilon
\le
d_F(b_1,b_2)
\le
(\lambda/2) d_B(b_1,b_2)+\epsilon
\end{equation*}
for all $b_1,b_2\in B$.
In this case, we say that $F(B)$ is \emph{quasi-isometrically immersed}.
In particular, when $F$ is a $(1,0)$-quasi-isometric immersion, we say that $F$ is an \emph{isometric immersion} and $F(B)$ is \emph{isometrically immersed}.
\end{definition}

\begin{remark}
Just as a quasi-isometric embedding needs not be an embedding, a quasi-isometric immersion needs not be an immersion in the usual sense.
However, an isometric immersion
is indeed an immersion.
\end{remark}

A holomorphic map $f:X_1\rightarrow X_2$ between complex manifolds is distance-decreasing for the intrinsic Kobayashi distances, namely $d_{X_1,\Kob}(x,y)\ge d_{X_2,\Kob}(f(x),f(y))$ (see, e.g., \cite[Proposition 2.3.1]{Abate}).
In particular, the holomorphic map $F:B \rightarrow \mathcal{M}_h$ and its lift $\widetilde{F}:\Hyperbolic^2 \rightarrow \Teich_h$ satisfy the following inequalities:
\begin{align*}
\frac{1}{2} d_B\Big( b_1,b_2 \Big)
\ge
d_{F}\Big( b_1,b_2 \Big)
\ge
d_{\mathcal{M}}\Big( F(b_1),F(b_2) \Big),\\
\frac{1}{2} d_{\Hyperbolic^2}\Big( \widetilde{b_1},\widetilde{b_2} \Big)
\ge
d_{\widetilde{F}}\Big( \widetilde{b_1},\widetilde{b_2} \Big)
\ge
d_{\Teich}\Big( \widetilde{F}(\widetilde{b_1}),\widetilde{F}(\widetilde{b_2}) \Big).
\end{align*}
Consequently, each peripheral monodromy must be either reducible or of finite order (see Corollary \ref{corollary::peripheral monodromy}).
In particular, if a peripheral monodromy $\phi$ is of infinite order, then a power $\phi^{\mu}$ is a multi-twist.

\begin{remark}
If a holomorphic map $F:(B,(1/2) d_B) \rightarrow (\mathcal{M}_h, d_\mathcal{M})$ is a $(\lambda,\epsilon)$-quasi-isometric embedding, then it must also be a $(\lambda,\epsilon)$-quasi-isometric immersion.
However, the converse does not hold in general.
\end{remark}

An isometrically immersed curve $F(B)\subset \mathcal{M}_h$ is known as a \emph{Teichm\"uller curve}.
The first non-trivial cases were discovered by Veech (\cite{Veech1989}) and all Teichm\"uller curves in $\mathcal{M}_2$ and $\mathcal{M}_3$ are almost well-understood (\cite{McMullen2005genus2,McMullen2006genus2} and \cite[][Theorem 5.5]{Lanneau-Nguyen2014,McMullen2023}).
In $\mathcal{M}_h$ with $h\ge 5$, Teichm\"uller curves are elusive and the only known primitive case was given in \cite{Bouw-Moller2010}.

\subsection*{Quasi-isometric rigidity for holomorphic curves}

Every isometric immersion from $B$ to $\mathcal{M}_h$ is either holomorphic or anti-holomorphic (see \cite{Antonakoudis2017}).
However, a holomorphic map needs not be an isometric immersion.
The following theorem shows that a weaker statement still holds:

\begin{thmx}
\label{thmx::quasi-isometry}
Let \((g, n)\), \(h\) and \(\epsilon\) be given such that \(2g - 2 + n > 0\), \(h \geq 2\) and \(\epsilon > 0\).
There exists a constant \(K = K(g, n, h, \epsilon)\), depending only on \(g\), \(n\), \(h\) and \(\epsilon\), with the following property:
Let \(B\) be an oriented hyperbolic surface of type \((g, n)\) with systole \(\sys(B) \geq \epsilon\) and cusp regions \(U_1, \dots, U_n \subset B\).
Let \(F: B \rightarrow \mathcal{M}_h\) be a non-constant holomorphic map with monodromy homomorphism \(F_* \in \Hom(\pi_1(B, t), \Mod_h)\).
Then:

\begin{enumerate}
\item \textbf{Quasi-isometric embedding on cusp regions:} For each \(i = 1, \dots, n\), the restriction  
\[
   \left.F\right|_{U_i} : (U_i, (1/2)d_B) \rightarrow (\mathcal{M}_h, d_{\mathcal{M}})
\]  
is a quasi-isometric embedding if and only if the corresponding peripheral monodromy has infinite order.
In this case, the restriction $\left.F\right|_{U_i}$ is a  \((1, K)\)-quasi-isometric embedding.

\item \textbf{Quasi-isometric immersion globally:} The map \(F\) is
a quasi-isometric immersion if and only if
all peripheral monodromies are of infinite order.
In this case, the map $F$ is a \((1, K)\)-quasi-isometric immersion.
\end{enumerate}
\end{thmx}

The condition that peripheral monodromies must be of infinite order in Theorem \ref{thmx::quasi-isometry} is ``obviously necessary''.
This necessity follows from the theory of the Deligne-Mumford compactification
of $\mathcal{M}_h$, 
particularly from stable reduction theory.
For further details, we refer to Proposition \ref{proposition::torsion-peripheral}.

Example \ref{example::non-quasi-cusp} shows
the case where the peripheral monodromy at the $i$-th cusp has finite order, yet $F(U_i)$ is located in the thick part of $\mathcal{M}_h$.
A sample holomorphic curve demonstrating this behaviour is depicted in Figure \ref{figure::holomorphic-curve}.

Moreover, Theorem \ref{thmx::quasi-isometry} is optimal:
there exist holomorphic curves in $\mathcal{M}_h$ that are quasi-isometrically but not isometrically immersed, as shown in Example \ref{example::quasi-example}.

\subsection*{Quasi-isometric rigidity for fundamental domains}

If the holomorphic map $F:B\rightarrow \mathcal{M}_h$ is an isometric immersion,
then the lift $\widetilde{F}:\Hyperbolic^2\rightarrow\Teich_h$ is
a complex geodesic for the intrinsic Kobayashi norms.
Teichm\"uller's uniqueness theorem (see, e.g., \cite[Theorems 11.8 and 11.9]{FarbMargalit2011}) shows that any two points of the Teichm\"uller space $\Teich_h$ are joined by a unique real geodesic.
Therefore, the lift $\widetilde{F}: (\Hyperbolic^2, (1/2)d_{\Hyperbolic^2}) \rightarrow (\Teich_h,d_{\Teich})$ is an isometric embedding.

If the holomorphic map $F:B\rightarrow \mathcal{M}_h$ is a quasi-isometric immersion,
in general, the lift $\widetilde{F}: (\Hyperbolic^2, (1/2)d_{\Hyperbolic^2}) \rightarrow (\Teich_h,d_{\Teich})$
fails to be a quasi-isometric embedding.

We now aim to obtain a hyperbolic polygon $D\subset \Hyperbolic^2$, i.e. a fundamental domain of $B$ bounded by geodesic segments, such that $\left.\widetilde{F}\right|_{D}: (D,(1/2)d_{\Hyperbolic^2}) \rightarrow (\Teich_h,d_{\Teich})$ is a quasi-isometric embedding.
To simplify the statement, we consider only the case $(g,n)=(0,n)$
and leave the statement for the general case in Subsection \ref{subsection::Theorem B}.

Given a holomorphic map $F:B\rightarrow\mathcal{M}_h$ and a monodromy homomorphism $F_*:\pi_1(B,t)\rightarrow \Mod_h$,
as $(g,n)=(0,n)$, one can describe the monodromy homomorphism $F_*:\pi_1(B,t)\rightarrow \Mod_h$ as an $n$-tuple in $\Mod_h$ as follows.
Choose homotopy classes of loops $\gamma_1, \ldots, \gamma_n \subset B$ based at $t$ such that $\gamma_i$ goes around the $i$-th cusp exactly once clockwise and $\gamma_n\cdots\gamma_1$ is homotopically trivial, for $i=1,\ldots,n$.
The $n$-tuple $(\phi_1,\ldots,\phi_n)\coloneqq (F_*(\gamma_1),\ldots,F_*(\gamma_n))$ is called a \emph{global monodromy} of $F$.
Note that $\phi_1\cdots \phi_n=1$.

The following condition on the monodromy is sufficient:

\begin{thmx}
\label{thmx::quasi-isometry-fundamental-domain-n=0}
Given $n\ge 3$, $h\ge 2$ and $\epsilon>0$,
there exists a constant $K=K(0,n,h,\epsilon)$, depending only on $n,h$ and $\epsilon$, with the following property:
Let $B$ be an oriented hyperbolic surface of type $(0,n)$ with $\sys(B)\ge \epsilon$.
Let $F:B\rightarrow\mathcal{M}_h$ be a non-constant holomorphic map.
Suppose that $(\phi_1,\ldots,\phi_n)$ is a global monodromy of $F$ satisfying:
\begin{itemize}[-]
\item each $\phi_i$ is of infinite order
and thus $\phi_i^{\mu_i} = T_{\boldsymbol{a}_i}$ is a multi-twist for some $\mu_i\ge 1$,

\item $\boldsymbol{a}_1,\ldots,\boldsymbol{a}_n$ are pairwise intersecting.
\end{itemize}
Then, there exists a fundamental polygon $D$ of $B$ such that
$\left.\widetilde{F}\right|_{D}: (D,(1/2)d_{\Hyperbolic^2}) \rightarrow (\Teich_h,d_{\Teich})$ is a $(2,K+\diam(D_{\text{cp}}))$-quasi-isometric embedding,
where $D_{\text{cp}}$ denotes the compact part of $D$ obtained by removing the lift of all cusp regions.
\end{thmx}

The second hypothesis of Theorem \ref{thmx::quasi-isometry-fundamental-domain-n=0} is a mild condition.
In particular, a holomorphic curve of type $(0,n), n\ge 3$ in $\mathcal{M}_2$ with each peripheral monodromy the Dehn twist along a non-separating closed curve (i.e. the image of the classifying map of a genus-$2$ Lefschetz fibration without reducible fibres) must have a quasi-isometrically embedded fundamental polygon (see Subsection \ref{subsection::genus2Lef}).

\subsection*{Outline}

Section \ref{section::Preliminary} introduces the preliminaries for studying the Teichmüller space and the monodromy of a holomorphic map
$F:B\rightarrow \mathcal{M}_h$.
Subsection \ref{subsection::eppA} establishes a strong monodromy condition, characterizing Teichmüller curves and proving the optimality of Theorem \ref{thmx::quasi-isometry}.

Subsection \ref{subsection::irreducibility} uses the irreducibility of holomorphic fibrations to prove Theorem \ref{theorem::Ct_belongs_to_compactK}, which states that every non-constant holomorphic curve intersects a thick part of the moduli space.
In Subsection \ref{subsection::cusp}, we analyse holomorphic hyperbolic cusp regions in $\mathcal{M}_h$.
A key observation is that an infinite-order mapping class, acting as a multi-twist along small geodesics, perturbs a marked hyperbolic surface only slightly in $\Teich_h$.
This reveals a ``force'' from the cusp, partially establishing the quasi-isometric embedding in Theorem \ref{thmx::quasi-isometry} - (\RNum{1}).
Theorem \ref{theorem::Ct_belongs_to_compactK} shows how the thick part of the moduli space exerts a ``force'' on holomorphic curves and cusp regions, completing the proof of Theorem \ref{thmx::quasi-isometry} in Subsection \ref{subsection::Theorem A}. Subsection \ref{subsection::Theorem B} proves Theorem \ref{thmx::quasi-isometry-fundamental-domain-n=0} and discusses quasi-isometrically embedded fundamental domains in the general case.

Finally, Section \ref{section::example} provides examples and an application to genus-$2$ Lefschetz fibrations.

\subsection*{Acknowledgements}
This work is part of my PhD thesis.
I would like to thank my advisors, Louis Funar and Greg McShane, for their helpful discussions and guidance. I would also like to thank Stepan Orevkov for useful suggestions.
I am deeply grateful to the referees for their thoughtful feedback and constructive comments, which helped refine and strengthen the paper.
\section{Preliminary}
\label{section::Preliminary}

We fix non-negative integers $g,n$ and $h$ with $2g-2+n>0$ and $h\ge 2$ once and for all.

\subsection{Teichm\"uller space}
\label{subsection::Teichmuller}

Let $\Sigma_{g,n}$ be an oriented smooth surface of genus $g$ with $n$ punctures without boundary.
The \emph{geometric intersection number} of two closed curves $\gamma_1$ and $\gamma_2$, denoted by $\Int(\gamma_1,\gamma_2)$, is the minimum cardinality of $\nu_1\cap \nu_2$ for all closed curves $\nu_1$, $\nu_2$ such that $\gamma_i$ is homotopic to $\nu_i$, for $i=1,2$.

We say that a set of closed curves $\{\gamma_1,\ldots,\gamma_k\}$ \emph{fills up} the surface $\Sigma_{g,n}$ if, for any non-contractible non-peripherial closed curve $\nu$, $\Int(\nu,\gamma_i)\ge 1$ for some $i$.
A set of disjoint simple closed curves on $\Sigma_{g,n}$ is called a multi-curve.
By convention, we define the product $\gamma_1\cdot \gamma_2$ of two oriented paths as their concatenation and the inverse $\gamma^{-1}$ of an oriented path is the same path with the opposite orientation.

The Teichm\"uller space $\Teich_{g,n}$ consists of all marked Riemann surfaces of type $(g,n)$, i.e. equivalent pairs $(X,f_X)$ where
$X$ is a Riemann surface and 
$f_X:\Sigma_{g,n}\rightarrow X$ is an orientation preserving diffeomorphism.
Two pairs $(X,f_X)$, $(Y,f_Y)$ are equivalent if $f_Y\circ f_X^{-1}:X\rightarrow Y$ is isotopic to a biholomorphism.
Based on the uniformization of Riemann surfaces, each equivalent class is represented by a marked hyperbolic surface.
Then a mapping class $[\phi]\in\Mod_{g,n}$ acts on $\Teich_{g,n}$ by
$[\phi]\cdot [(X,f_X)]=[(X,f_X\circ \phi^{-1})]$.
The mapping class group acts properly discontinuously on the Teichm\"uller space and the quotient space is called the moduli space, denoted by $\mathcal{M}_{g,n}$.

For any two points $[(X,f_X)]$, $[(Y,f_Y)]\in\Teich_{g,n}$, we define the \emph{Teichm\"uller distance} $d_{\Teich}$ by
\begin{equation}
\label{equation::Teichmuller distance}
    d_{\Teich}([(X,f_X)],[(Y,f_Y)])=\frac{1}{2} \log \inf_{\phi}\{K(\phi)\}.
\end{equation}
Here, the infimum is taken over all quasiconformal homeomorphisms $\phi:X\rightarrow Y$ homotopic to $f_Y\circ f_X^{-1}$, i.e. all quasiconformal homeomorphisms respecting the markings.
Moreover, $K(\phi)\ge 1$ denotes dilatation of $\phi$.

The \emph{geodesic length function}
$L_{\gamma}([(X,f_X)])$
assigns to each closed curve $\gamma\subset \Sigma_{g,n}$ the length of the unique geodesic homotopic to $f_X(\gamma)$ on the hyperbolic representative $X$ of $[(X,f_X)] \in \Teich_{g,n}$.
The length of a closed curve $\gamma$ on a hyperbolic surface $X$ is denoted by $l_X(\gamma)$.
The next lemma is due to Wolpert.

\begin{lemma}[\cite{Wolpert1979}]
\label{lemma::wolpert}
Consider two points in $\Teich_{g,n}$ that are represented by the marked hyperbolic surfaces $(X,f_X)$ and $(Y,f_Y)$.
Set $K=\exp{\big( 2d_{\Teich}([(X,f_X)],[(Y,f_Y)]) \big) }$.
Then, the geodesic length function is distorted by a factor of at most $K$, i.e.
\begin{equation*}
    \frac{1}{K}L_{\gamma}([(X,f_X)])\le L_{\gamma}([(Y,f_Y)])\le K L_{\gamma}([(X,f_X)])
\end{equation*}
for any closed curve $\gamma\subset\Sigma_{g,n}$.
\end{lemma}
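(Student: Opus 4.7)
The plan is to extract from (\ref{equation::Teichmuller distance}) a quasiconformal diffeomorphism $\phi:X\to Y$ homotopic to $f_Y\circ f_X^{-1}$ whose dilatation $K(\phi)$ is as close to $K$ as we like --- or, invoking Teichmüller's existence theorem, realising $K$ exactly. Homotopic maps between surfaces send free homotopy classes to free homotopy classes, so $\phi$ takes the free homotopy class of $f_X(\gamma)$ on $X$ to that of $f_Y(\gamma)$ on $Y$. It therefore suffices to prove the following: whenever $\psi:X\to Y$ is a $K$-quasiconformal homeomorphism between two finite-area hyperbolic surfaces and $\alpha\subset X$ is an essential closed curve, one has $l_Y(\psi(\alpha))\le K\,l_X(\alpha)$. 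The lower bound then follows by applying the same claim to $\psi^{-1}$ (itself $K$-quasiconformal), and the case of a contractible or peripheral $\gamma$ is trivial since both lengths vanish.

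The main tool is the annular cover. If $\alpha$ corresponds to a hyperbolic element $g\in\pi_1(X)$ of translation length $l_X(\alpha)$, a direct computation in the upper half-plane model identifies the cyclic cover $\widetilde X_{\alpha}=\langle g\rangle\backslash\Hyperbolic^2$ with a flat annulus of modulus $\pi/l_X(\alpha)$; similarly $\widetilde Y_{\psi(\alpha)}$ has modulus $\pi/l_Y(\psi(\alpha))$. Since $K$-quasiconformality is a local property, $\psi$ lifts to a $K$-quasiconformal homeomorphism $\widetilde\psi:\widetilde X_{\alpha}\to\widetilde Y_{\psi(\alpha)}$ between these two annuli.

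The proof is then completed by Grötzsch's theorem, which asserts that a $K$-quasiconformal map between annuli distorts the conformal modulus by a factor of at most $K$. This yields $\pi/l_Y(\psi(\alpha))\le K\cdot\pi/l_X(\alpha)$, i.e. $l_Y(\psi(\alpha))\le K\,l_X(\alpha)$, as required. The only subtlety is checking that the annular-cover formula persists on cusped surfaces of type $(g,n)$: but since $\alpha$ is taken non-peripheral and the upper-half-plane description of $\widetilde X_\alpha$ uses only that $g$ is hyperbolic, the cusps of $X$ cause no trouble, and no serious obstacle remains.
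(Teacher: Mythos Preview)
The paper does not prove this lemma at all; it is quoted with a citation to Wolpert \cite{Wolpert1979} and used as a black box throughout. Your argument is essentially Wolpert's original one: pass to the annular cover associated to the hyperbolic element, compute its conformal modulus as $\pi/\ell$, and invoke the quasi-invariance of modulus under $K$-quasiconformal maps. So the approach is correct and standard.

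One small slip worth fixing: from $\pi/l_Y(\psi(\alpha))\le K\cdot\pi/l_X(\alpha)$ one deduces $l_Y(\psi(\alpha))\ge l_X(\alpha)/K$, which is the \emph{lower} bound, not the upper bound you then write. The upper bound $l_Y(\psi(\alpha))\le K\,l_X(\alpha)$ comes from the other half of the Gr\"otzsch inequality, namely $\pi/l_Y(\psi(\alpha))\ge (1/K)\cdot\pi/l_X(\alpha)$. Since Gr\"otzsch gives both directions this is harmless, but the displayed implication as written is backwards.
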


Recall the classification of mapping classes due to Bers (\cite{Bers1978}).
Let $\varphi\in\Mod_h$ be a mapping class.
The \emph{translation distance} of $\varphi$ is defined by
\begin{equation*}
\tau(\varphi)=
\inf_{[(X,f_X)]\in\Teich_h}d_{\Teich}([(X,f_X)],\varphi\cdot [(X,f_X)]).
\end{equation*}
Then $\tau(\varphi)=0$ and the infimum is attained if and only if $\varphi$ is periodic.
Also $\tau(\varphi)$ is positive and the infimum is attained if and only if $\varphi$ is pseudo-Anosov.
Eventually, $\tau(\varphi)$ is not attained if and only if $\varphi$ is reducible and of infinite order.
Furthermore, if $\tau(\varphi)=0$ and $\tau(\varphi)$ is not attained, then
then there exists $\mu \in \Z_{\ge 1}$ bounded above by a constant determined by $h$ such that $\phi^{\mu}$ is a multi-twist.
\subsection{Monodromy homomorphisms}
\label{subsection::monodromy}

Suppose that $B=\Gamma \backslash \Hyperbolic^2$ is a hyperbolic surface of type $(g,n)$ with some lattice $\Gamma\le \Aut(\Hyperbolic^2)$.
Consider a holomorphic map $F:B\rightarrow\mathcal{M}_h$ and the lift $\widetilde{F}:\Hyperbolic^2\rightarrow \Teich_h$.
We introduce the induced \emph{monodromy homomorphisms}.

Firstly, the map $\widetilde{F}:\Hyperbolic^2\rightarrow\Teich_h$ induces a group homomorphism $F_{\Gamma}:\Gamma\rightarrow \Mod_h$ such that
$\widetilde{F}\circ \phi = F_{\Gamma}(\phi)\circ \widetilde{F}$,
for every $\phi\in\Gamma$.
When $F(t)$ has non-identity automorphisms (i.e. $F(t)$ is symmetric) for some $t\in B$, the homomorphism $F_{\Gamma}$ is not necessarily unique.
There are at most $(2g+n)^{84(h-1)}$ many possibilities of such a homomorphism $F_{\Gamma}$.

Secondly, fixing a base point $t\in B$ and lifting it to some $\widetilde{t}\in\Hyperbolic^2$, we obtain a standard group isomorphism $\rho_{t,\widetilde{t}}:\pi_1(B,t)\rightarrow \Gamma$ as follows.
A loop $\gamma\subset B$ based at $t$ is lifted to the path in $\Hyperbolic^2$ joining $\widetilde{t}$ and $\rho_{t,\widetilde{t}}([\gamma]) \cdot \widetilde{t}$.

Eventually,
$F_*\coloneqq F_{\Gamma}\circ \rho_{t,\widetilde{t}} \in \Hom(\pi_1(B,t), \Mod_h)$
is called a \emph{monodromy homomorphism} of $F$.
A different choice of $\widetilde{t}$ may change the monodromy homomorphism by a conjugacy.
The monodromy homomorphism enables us to reformulate the most important property of the holomorphic map $F$, say being distance-decreasing for the intrinsic Kobayashi distances, as follows.

\begin{proposition}
\label{proposition::distance-decreasing-1}
Let $\gamma\subset B$ be a loop based at $t$.
Then $(1/2) l_B(\gamma) \ge d_{\Teich}\Big( \widetilde{F}(\widetilde{t}), F_{*}([\gamma])\cdot \widetilde{F}(\widetilde{t})  \Big)$.
\end{proposition}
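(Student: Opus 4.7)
The plan is to exploit the equivariance of $\widetilde{F}$ under the monodromy action together with the fact that holomorphic maps are distance-decreasing for the intrinsic Kobayashi distance. Write $\phi = \rho_{t,\widetilde{t}}([\gamma]) \in \Gamma$, so that by the defining property of $F_{\Gamma}$,
\begin{equation*}
\widetilde{F}(\phi \cdot \widetilde{t}) \;=\; F_{\Gamma}(\phi)\cdot \widetilde{F}(\widetilde{t}) \;=\; F_*([\gamma])\cdot \widetilde{F}(\widetilde{t}).
\end{equation*}
The task thus reduces to estimating $d_{\Teich}\big(\widetilde{F}(\widetilde{t}),\widetilde{F}(\phi\cdot\widetilde{t})\big)$.

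First, applying the general distance-decreasing property recalled just after the definition of quasi-isometric immersion to the holomorphic map $\widetilde{F}:\Hyperbolic^2\to\Teich_h$ yields
\begin{equation*}
d_{\Hyperbolic^2,\Kob}(\widetilde{t},\phi\cdot\widetilde{t})\;\ge\;d_{\Teich,\Kob}\big(\widetilde{F}(\widetilde{t}),\widetilde{F}(\phi\cdot\widetilde{t})\big).
\end{equation*}
Substituting the two identifications recalled in the introduction, namely $d_{\Hyperbolic^2,\Kob}=(1/2)d_{\Hyperbolic^2}$ and $d_{\Teich,\Kob}=d_{\Teich}$ (the latter being Royden's theorem), I obtain
\begin{equation*}
(1/2)\,d_{\Hyperbolic^2}(\widetilde{t},\phi\cdot\widetilde{t})\;\ge\;d_{\Teich}\big(\widetilde{F}(\widetilde{t}),F_*([\gamma])\cdot\widetilde{F}(\widetilde{t})\big).
\end{equation*}

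Second, I bound $d_{\Hyperbolic^2}(\widetilde{t},\phi\cdot\widetilde{t})\le l_B(\gamma)$. By construction of $\rho_{t,\widetilde{t}}$, the loop $\gamma$ lifts through the covering $\Hyperbolic^2\to B$ to a path joining $\widetilde{t}$ to $\phi\cdot\widetilde{t}$; since the covering is a local isometry for the hyperbolic metrics, this lift has the same length as $\gamma$, so the hyperbolic distance between its endpoints is at most that length. Taking the infimum over representatives of $[\gamma]$ (or simply choosing the geodesic representative, whose lift is a, not necessarily geodesic, path with the same two endpoints) gives the bound with $l_B(\gamma)$ understood as the length of the geodesic in the free homotopy class. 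Chaining this with the previous inequality yields the proposition.

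I do not expect any genuine obstacle: the statement is essentially a bookkeeping reformulation of the Kobayashi distance-decreasing property that keeps track of the two specific lifts in $\Teich_h$ differing by the monodromy (the ``slight improvement'' over the naive bound $d_{\mathcal{M}}(F(t),F(t))=0$ from working downstairs). The only point warranting care is the interpretation of $l_B(\gamma)$; the argument works for the length of any representative, and in particular for the geodesic length, because the lift of a length-minimizing representative provides a path in $\Hyperbolic^2$ between $\widetilde{t}$ and $\phi\cdot\widetilde{t}$ of that length.
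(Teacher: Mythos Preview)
Your proof is correct and follows essentially the same route as the paper: equivariance of $\widetilde{F}$ gives $\widetilde{F}(\phi\cdot\widetilde{t})=F_*([\gamma])\cdot\widetilde{F}(\widetilde{t})$, then the Kobayashi distance-decreasing property (with $d_{\Hyperbolic^2,\Kob}=\tfrac12 d_{\Hyperbolic^2}$ and $d_{\Teich,\Kob}=d_{\Teich}$) finishes the job; the paper simply writes the chain in one line and takes $l_B(\gamma)=d_{\Hyperbolic^2}(\widetilde{t},\phi\cdot\widetilde{t})$ as an equality (interpreting $l_B(\gamma)$ as the based geodesic length).

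One caveat on your final parenthetical: the bound does \emph{not} survive if $l_B(\gamma)$ is read as the length of the geodesic in the \emph{free} homotopy class, since that geodesic need not pass through $t$ and its length (the translation length of $\phi$) is in general strictly smaller than $d_{\Hyperbolic^2}(\widetilde{t},\phi\cdot\widetilde{t})$. The correct reading---and the one the paper uses---is that $l_B(\gamma)$ is the length of the specific based loop $\gamma$ (or, equivalently for the sharpest form, the minimum over loops homotopic to $\gamma$ rel $t$).
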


\begin{proposition}
\label{proposition::distance-decreasing-2}
Let $\gamma\subset B$ be a loop based at $t$
and $\gamma'\subset B$ be a free loop homotopic to $\gamma$.
Then $(1/2) l_B(\gamma') \ge \tau( F_{*}([\gamma]) )$.
\end{proposition}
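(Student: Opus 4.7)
The plan is to reduce the inequality to the case where $\gamma'$ is the closed geodesic representative of the free homotopy class, then translate the left-hand side to a translation length in $\Hyperbolic^2$ and push it through $\widetilde{F}$ using the distance-decreasing property of the Kobayashi pseudo-norm. The key bookkeeping is checking that the resulting element of $\Mod_h$ is conjugate to $F_*([\gamma])$, so that the infimum defining $\tau$ sees the correct mapping class.

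First I would observe that $l_B(\gamma') \ge l_B(\gamma'_0)$, where $\gamma'_0$ denotes the unique closed geodesic in the free homotopy class of $\gamma$ (or use $l_B(\gamma')=+\infty$ trivially if the class is peripheral). It therefore suffices to prove the bound when $\gamma'$ is that geodesic. Lift $\gamma'$ to its axis $\widetilde{\gamma'}\subset\Hyperbolic^2$, which is the translation axis of some hyperbolic element $\phi\in\Gamma$, and pick any $\widetilde{t'}\in\widetilde{\gamma'}$. By construction $l_B(\gamma') = d_{\Hyperbolic^2}(\widetilde{t'}, \phi\cdot \widetilde{t'})$.

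Next, applying the distance-decreasing property of the Kobayashi pseudo-norm (exactly as in the proof of Proposition \ref{proposition::distance-decreasing-1}, but with base point $\widetilde{t'}$ in place of $\widetilde{t}$, and using the equivariance $\widetilde{F}\circ\phi = F_\Gamma(\phi)\circ\widetilde{F}$), I obtain
\begin{equation*}
\tfrac{1}{2}\,l_B(\gamma')
= \tfrac{1}{2}\,d_{\Hyperbolic^2}(\widetilde{t'}, \phi\cdot\widetilde{t'})
\ge d_{\Teich}\bigl(\widetilde{F}(\widetilde{t'}),\, F_\Gamma(\phi)\cdot \widetilde{F}(\widetilde{t'})\bigr)
\ge \tau\bigl(F_\Gamma(\phi)\bigr),
\end{equation*}
the final inequality being the very definition of the translation distance.

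It remains to identify $\tau(F_\Gamma(\phi))$ with $\tau(F_*([\gamma]))$. Because $\gamma'$ and $\gamma$ are freely homotopic, the element $\rho_{t,\widetilde{t}}([\gamma])\in\Gamma$ is conjugate in $\Gamma$ to $\phi$. Applying the homomorphism $F_\Gamma$, the mapping class $F_*([\gamma]) = F_\Gamma(\rho_{t,\widetilde{t}}([\gamma]))$ is conjugate in $\Mod_h$ to $F_\Gamma(\phi)$. Since the translation distance $\tau$ is manifestly invariant under conjugation in $\Mod_h$, we conclude $\tau(F_\Gamma(\phi)) = \tau(F_*([\gamma]))$, which finishes the proof. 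No real obstacle is expected; the only subtlety is the base-point/conjugation bookkeeping, which is the standard one relating based loops to free homotopy classes.
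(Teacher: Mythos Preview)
Your argument is essentially correct and close in spirit to the paper's, but there is one genuine slip and one unnecessary detour worth flagging.

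The slip is the parenthetical ``or use $l_B(\gamma')=+\infty$ trivially if the class is peripheral''. The length $l_B(\gamma')$ is the length of the specific loop $\gamma'$, which is always finite; nothing becomes infinite in the peripheral case. What actually happens there is that the free homotopy class contains loops of arbitrarily small length, so the same inequality applied to those forces $\tau(F_*([\gamma]))=0$, and the statement holds trivially. As written your peripheral case is not handled.

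The detour is the reduction to the closed geodesic $\gamma'_0$. The paper avoids this entirely: it works with the given $\gamma'$ directly, chooses a basepoint $t'=\gamma'(0)$ on it, and uses the free homotopy $H$ between $\gamma$ and $\gamma'$ to lift $t'$ to $\widetilde{t'}$ along the track $H(\cdot,0)$. With this compatible choice of lift one gets the exact equality $F'_*([\gamma'])=F_*([\gamma])$ rather than mere conjugacy, and then Proposition~\ref{proposition::distance-decreasing-1} applied at $t'$ gives $(1/2)l_B(\gamma')\ge d_{\Teich}(\widetilde{F}(\widetilde{t'}),F_*([\gamma])\cdot\widetilde{F}(\widetilde{t'}))\ge\tau(F_*([\gamma]))$ in one stroke. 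This handles peripheral and non-peripheral classes uniformly and makes the geodesic reduction and the conjugacy-invariance step unnecessary. Your route via the axis and conjugacy invariance of $\tau$ is perfectly valid in the hyperbolic case; it just costs you the extra case split.
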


As a consequence, we have the following.

\begin{corollary}
\label{corollary::peripheral monodromy}
Each peripheral monodromy $\phi$ satisfies $\tau(\phi)=0$.    
\end{corollary}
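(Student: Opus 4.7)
The plan is to apply Proposition \ref{proposition::distance-decreasing-2} to a sequence of free loop representatives of the peripheral class whose hyperbolic lengths tend to zero. By definition, a peripheral monodromy has the form $\phi = F_*([\gamma])$ for some based loop $\gamma \subset B$ that goes around a cusp region, say some $U_i$, a nonzero number of times. The key geometric input is that, in a cusp of a hyperbolic surface, the horocycles near the cusp have hyperbolic length tending to $0$ as one proceeds deeper into the cusp; concretely, in the standard upper half-plane model of a cusp the horocycle $\{\Image(z) = y\}$ (modulo the parabolic translation) has length $1/y$.

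Given this, I would argue as follows. Fix a peripheral based loop $\gamma$ around $U_i$ (traversed a given number of times, clockwise or counterclockwise). For each sufficiently large $y > 0$, let $\gamma'_y \subset B$ be the free loop obtained by running along the horocycle of length $1/y$ the appropriate number of times in the appropriate direction inside the cusp region. Then $\gamma'_y$ is freely homotopic to $\gamma$ (both represent the same conjugacy class in $\pi_1(B)$ determined by the cusp), and $l_B(\gamma'_y) \to 0$ as $y \to \infty$. Applying Proposition \ref{proposition::distance-decreasing-2} to each $\gamma'_y$ yields
\begin{equation*}
\tau(\phi) = \tau(F_*([\gamma])) \le \tfrac{1}{2} l_B(\gamma'_y) \xrightarrow{y\to\infty} 0,
\end{equation*}
and since $\tau(\phi) \ge 0$ always, this forces $\tau(\phi) = 0$.

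There is essentially no obstacle here: the proof reduces to the standard fact that a parabolic element of $\Gamma \le \Aut(\Hyperbolic^2)$ has translation length zero on $\Hyperbolic^2$ (equivalently, the horocycles shrink inside the cusp), combined with the distance-decreasing property packaged in Proposition \ref{proposition::distance-decreasing-2}. The only mild care needed is to make sure the free homotopy between $\gamma$ and the horocyclic representative $\gamma'_y$ is inside $B$ (which is automatic because one can take a ray joining the basepoint $t$ to the cusp and push $\gamma$ along it).
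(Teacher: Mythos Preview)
Your proof is correct and follows essentially the same approach as the paper: the paper's proof is the single line ``The corollary follows from Proposition \ref{proposition::distance-decreasing-2},'' and what you have written is precisely the unpacking of that sentence, using horocyclic representatives of arbitrarily small length in the cusp.
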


The necessity for peripheral monodromies having infinite order in Theorem \ref{thmx::quasi-isometry} is the following.

\begin{proposition}
\label{proposition::torsion-peripheral}
Let $F:B\rightarrow \mathcal{M}_h$ be a holomorphic map.
If some peripheral monodromy of $F$ has finite order, then the restriction of $F$ to the corresponding cusp region is neither a quasi-isometric embedding nor a quasi-isometric immersion.
\end{proposition}

\begin{proof}
Suppose that the peripheral monodromy around some cusp point $\star$ of $B$ has finite order.
Let $U \subset B$ be the cusp neighbourhood of $\star$ bihomorphic to the punctured disc $\mathbb{D}^*$.
By \cite[Theorem 2]{Imayoshi1981}
(cf. \cite[Theorem A (2)]{Imayoshi-2009}),
the map $\left.F\right|_U$ extends to a holomorphic map $F_{\overline{U}}:\overline{U} \rightarrow \mathcal{M}_h$, where $\overline{U} \coloneqq U \sqcup \{\star\}$ is biholomorphic to the disc $\mathbb{D}$.

The pullback norm $\left( F_{\overline{U}} \right)^* \Kob_{\mathcal{M}_h}$ induces a distance $d_{F_{\overline{U}}}$ on $\overline{U}$ with
$d_F(b_1,b_2) = d_{F_{\overline{U}}}(b_1,b_2)$ for all $b_1, b_2 \in U$.
The compactness of $\overline{U}$ implies that
$d_F(b_1,b_2) \le \sup_{b_1,b_2 \in \overline{U}} d_{F_{\overline{U}}}(b_1,b_2) \eqqcolon \diam_{F_{\overline{U}}}(\overline{U})$.
Consider points $b_n \in U$ lying on the horocycle of length $1/n$.
We get that 
$d_B(b_1,b_n) \rightarrow \infty$
while $d_F(b_1,b_n) \le \diam_{F_{\overline{U}}}(\overline{U})$.
This shows that the restriction $\left.F\right|_U$ 
fails to be a quasi-isometric immersion and, consequently, cannot be a quasi-isometric embedding.
\end{proof}

\subsection{Essentially purely pseudo-Anosov monodromy}
\label{subsection::eppA}

We introduce the following hypothesis on the monodromy homomorphism (see also \cite{Reid2006}).

\begin{definition}
\label{definition::eppA}
Let $B$ be an oriented hyperbolic surface of type $(g,n)$ and $F:B\rightarrow \mathcal{M}_h$ be a holomorphic map.
We say that a monodromy homomorphism $F_*:\pi_1(B,t)\rightarrow \Mod_h$ of $F$ is \emph{essentially purely pseudo-Anosov} if for each non-trivial non-peripheral class $[\gamma]\in \pi_1(B,t)$ the image $F_*([\gamma])$ is pseudo-Anosov.
\end{definition}

Isometrically immersed holomorphic curves are specific examples whose monodromy homomorphisms are essentially purely pseudo-Anosov.
In fact, as mentioned in \cite{EskinMirz2011}, every closed geodesic in  $\mathcal{M}_h$ is the unique loop of minimal length in its homotopy class.
Hence, we have the following.

\begin{theorem}
\label{theorem::Teichmuller curve is eppA}
Each monodromy homomorphism of a Teichm\"uller curve is essentially purely pseudo-Anosov.
\end{theorem}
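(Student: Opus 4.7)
The plan is to exploit the isometric embedding property of the lift $\widetilde{F}$ that holds for a Teichm\"uller curve, combined with Bers' classification of mapping classes via translation distance recalled in Subsection \ref{subsection::Teichmuller}. Let $[\gamma]\in\pi_1(B,t)$ be a non-trivial, non-peripheral class. Since $B=\Gamma\backslash\Hyperbolic^2$ is hyperbolic, the deck transformation $\rho_{t,\widetilde{t}}([\gamma])\in\Gamma$ is a hyperbolic isometry of $\Hyperbolic^2$, with axis $A\subset\Hyperbolic^2$ and translation length $l=l_B(\gamma')>0$, where $\gamma'$ is the unique closed geodesic freely homotopic to $\gamma$.

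Next I would invoke the discussion preceding Theorem \ref{thmx::quasi-isometry-fundamental-domain}: because $F$ is an isometric immersion, Teichm\"uller's uniqueness theorem upgrades the lift to an isometric embedding $\widetilde{F}\colon(\Hyperbolic^2,\tfrac{1}{2} d_{\Hyperbolic^2})\to(\Teich_h,d_{\Teich})$. In particular $\widetilde{F}(A)$ is a Teichm\"uller geodesic in $\Teich_h$. The monodromy equivariance $\widetilde{F}\circ\rho_{t,\widetilde{t}}([\gamma])=F_*([\gamma])\circ\widetilde{F}$ then shows that $F_*([\gamma])$ preserves $\widetilde{F}(A)$ setwise and acts on it by translation of distance $\tfrac{1}{2} l>0$. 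Consequently $\tau(F_*([\gamma]))\le\tfrac{1}{2} l$, and for every $\widetilde{t}\in A$ one has $d_{\Teich}\bigl(\widetilde{F}(\widetilde{t}),\,F_*([\gamma])\cdot\widetilde{F}(\widetilde{t})\bigr)=\tfrac{1}{2} l$.

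It then remains to locate $F_*([\gamma])$ among the three Bers types. The periodic case is excluded immediately: if $F_*([\gamma])^N=\mathrm{id}$ then this power would act trivially on $\widetilde{F}(A)$, contradicting that it translates $\widetilde{F}(A)$ by $N\cdot\tfrac{1}{2} l>0$. To rule out the reducible-of-infinite-order case, I would use the classical fact that a Teichm\"uller geodesic invariant under a non-periodic mapping class must coincide with its pseudo-Anosov axis: indeed, the geodesic arises from a pair of transverse measured foliations by Teichm\"uller's uniqueness theorem, and an element translating it by a positive amount $c$ must rescale these foliations by $e^{c}$ and $e^{-c}$, which is the defining property of a pseudo-Anosov. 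Hence $F_*([\gamma])$ is pseudo-Anosov.

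I expect the main obstacle to be this last identification, i.e.\ converting the geometric invariance of $\widetilde{F}(A)$ into the pseudo-Anosov conclusion while remaining self-contained. A cleaner route would be to verify directly that $\tau(F_*([\gamma]))=\tfrac{1}{2} l$ is attained on $\widetilde{F}(A)$ (the two endpoints of the geodesic are fixed on Thurston's boundary, forcing the infimum to be realised along it), and then invoke the Bers criterion from Subsection \ref{subsection::Teichmuller}, according to which attainment of a positive $\tau$ characterises pseudo-Anosov classes.
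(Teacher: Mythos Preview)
Your approach is correct and follows the same overall strategy as the paper: use the isometric embedding $\widetilde{F}$ to see that $F_*([\gamma])$ translates a Teichm\"uller geodesic by $\tfrac{1}{2}l>0$, and then conclude pseudo-Anosov via Bers' classification. The difference lies in the final step. You propose either (a) reading off the pseudo-Anosov property from the fact that a mapping class translating a Teichm\"uller geodesic rescales its defining foliations, or (b) invoking Thurston's boundary to argue the infimum $\tau$ is realised along the axis. The paper instead gives a completely elementary contradiction argument: if some $\widetilde{q}\in\Teich_h$ had $d_{\Teich}(\widetilde{q},\phi\cdot\widetilde{q})<\tfrac{1}{2}l$, then for large $N$ the triangle inequality applied to $d_{\Teich}(\widetilde{F}(\widetilde{t'}),\phi^N\cdot\widetilde{F}(\widetilde{t'}))=\tfrac{N}{2}l$ via the detour through $\widetilde{q}$ yields $\tfrac{N}{2}l\le 2d_{\Teich}(\widetilde{F}(\widetilde{t'}),\widetilde{q})+N\,d_{\Teich}(\widetilde{q},\phi\cdot\widetilde{q})<\tfrac{N}{2}l$. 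This shows directly that $\tau(\phi)=\tfrac{1}{2}l$ is attained, hence $\phi$ is pseudo-Anosov, without needing quadratic differentials, measured foliations, or the Thurston compactification. Your routes are valid but import more machinery; the paper's argument stays entirely within the metric setup of Subsection~\ref{subsection::Teichmuller}.
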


For the monodromy of a holomorphic map,
the property of being essentially purely pseudo-Anosov is stronger than the hypotheses required in Theorem \ref{thmx::quasi-isometry} and Theorem \ref{thmx::quasi-isometry-fundamental-domain-n=0}.
Before providing a detailed explanation, we first introduce the following definitions for mapping classes and peripheral monodromies.
These concepts will be used in Theorem \ref{theorem::quasi-isometry-fundamental-domain}, which serves as the general version of Theorem \ref{thmx::quasi-isometry-fundamental-domain-n=0}.

\begin{figure}[htb]
\centering
\begin{minipage}{.48\textwidth}
  \centering
  \def\svgwidth{.9\textwidth}
\begingroup%
  \makeatletter%
  \providecommand\color[2][]{%
    \errmessage{(Inkscape) Color is used for the text in Inkscape, but the package 'color.sty' is not loaded}%
    \renewcommand\color[2][]{}%
  }%
  \providecommand\transparent[1]{%
    \errmessage{(Inkscape) Transparency is used (non-zero) for the text in Inkscape, but the package 'transparent.sty' is not loaded}%
    \renewcommand\transparent[1]{}%
  }%
  \providecommand\rotatebox[2]{#2}%
  \newcommand*\fsize{\dimexpr\f@size pt\relax}%
  \newcommand*\lineheight[1]{\fontsize{\fsize}{#1\fsize}\selectfont}%
  \ifx\svgwidth\undefined%
    \setlength{\unitlength}{566.92913386bp}%
    \ifx\svgscale\undefined%
      \relax%
    \else%
      \setlength{\unitlength}{\unitlength * \real{\svgscale}}%
    \fi%
  \else%
    \setlength{\unitlength}{\svgwidth}%
  \fi%
  \global\let\svgwidth\undefined%
  \global\let\svgscale\undefined%
  \makeatother%
  \begin{picture}(1,1.2)%
    \lineheight{1}%
    \setlength\tabcolsep{0pt}%
    \put(0,0){\includegraphics[width=\unitlength,page=1]{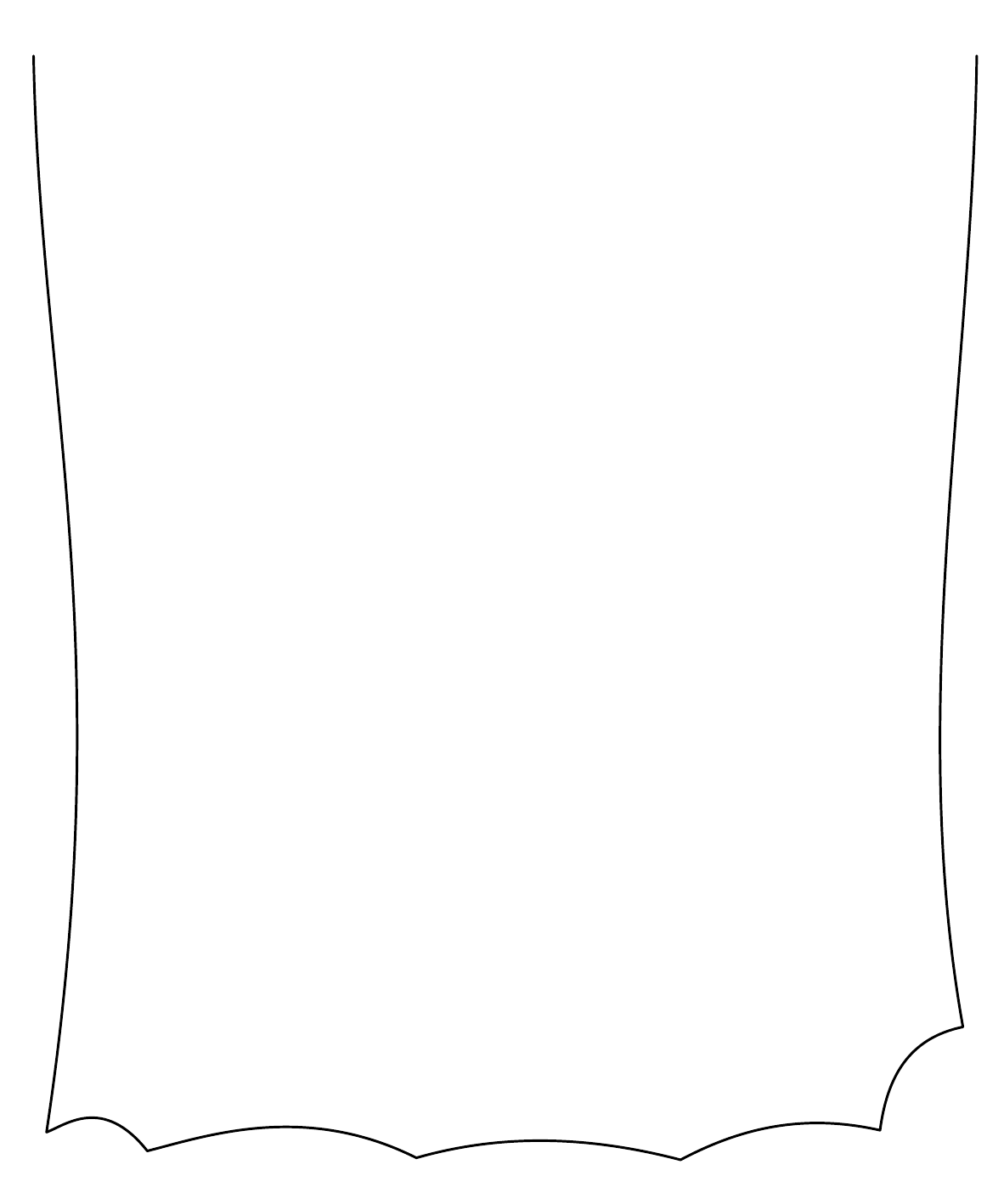}}%
    \put(0.07116224,0.10313347){\color[rgb]{0,0,0}\transparent{0.99999964}\makebox(0,0)[lt]{\lineheight{1.25}\smash{\begin{tabular}[t]{l}$\mathcal{M}_h$\end{tabular}}}}%
    \put(0,0){\includegraphics[width=\unitlength,page=2]{figures/holomorphic-curve.pdf}}%
  \end{picture}%
\endgroup%

  \caption{A cartoon of a holomorphic curve in $\mathcal{M}_h$. Each cusp neighbourhood of the holomorphic curve almost keeps the hyperbolic structure along the Teichm\"uller metric, unless the peripheral monodromy is of finite order.}
  \label{figure::holomorphic-curve}
\end{minipage}\hfill
\begin{minipage}{.48\textwidth}
  \centering
  \def\svgwidth{.9\textwidth}
  \input{pdf_tex/kappa.pdf_tex}
  \caption{Different geodesic segments joining the boundaries of two given cusp regions $U_i$ and $U_j$, each providing a pair of peripheral monodromies $(\phi_i,\phi_j)$.}
  \label{figure::kappa}
\end{minipage}
\end{figure}

\begin{definition}[\emph{disjointed} mapping classes]
Let $\phi_1$ and $\phi_2\in \Mod_h$ be reducible mapping classes.
Suppose that there exist positive integers $\mu_1$, $\mu_2$ and multi-curves
$\boldsymbol{\alpha_1}=\{{\alpha_{1,1},\ldots,\alpha_{1,m_1}}\}$, $\boldsymbol{\alpha_2}=\{{\alpha_{2,1},\ldots,\alpha_{2,m_2}}\}$ 
such that $\phi_1^{\mu_1}$, $\phi_2^{\mu_2}$ are multi-twists along $\boldsymbol{\alpha_1}$, $\boldsymbol{\alpha_2}$,
\begin{equation*}
\phi_1^{\mu_1}=
T_{\alpha_{1,1}}^{r_{1,1}}\circ
\cdots \circ 
T_{\alpha_{1,m_1}}^{r_{1,m_1}},
~~~~~
\phi_2^{\mu_2}=
T_{\alpha_{2,1}}^{r_{2,1}}\circ
\cdots \circ 
T_{\alpha_{2,m_1}}^{r_{2,m_1}}
\end{equation*}
with $r_{i,j}\in\Z\setminus\{0\}$, for $i=1,2$ and $j=1,\ldots,m_i$.
We say that $\phi_1$ and $\phi_2$ are \emph{disjointed} if
pairs of curves in $\boldsymbol{\alpha_1}$ and $\boldsymbol{\alpha_2}$ are disjoint or coincide.
\end{definition}

Consider a holomorphic map $F:B\rightarrow \mathcal{M}_h$ and the monodromy $F_{*}: \pi_1(B,t)\rightarrow \Mod_h$.

\begin{definition}[\emph{disjointed} peripheral monodromies]
Let $U_i$, $U_j$ be cusp regions of the hyperbolic surface $B$, $i\neq j$, endowed with a geodesic segment $\kappa$ joining $\partial U_i$ to $\partial U_j$.
Set $\{t_0\}=\partial U_i \cap \kappa$.
and take an arbitrary path $\gamma$ joining $t$ to $t_0$ (see Figure \ref{figure::kappa}).
The loop along $\gamma\cup \partial U_i$ based at $t$ that goes once around $U_i$ clockwise is denoted by $\gamma_i$ and its monodromy is denoted by $\phi_i$.
The loop along $\gamma\cup \kappa\cup \partial U_j$ based at $t$ that goes once around $U_j$ clockwise is denoted by $\gamma_j$ and its monodromy is denoted by $\phi_j$.
We say that peripheral monodromies of $U_i$ and $U_j$ are \emph{disjointed} along $\kappa$
if $\phi_i$ and $\phi_j$ are reducible and disjointed.
\end{definition}

A pair of disjointed mapping classes after a simultaneous conjugacy is again disjointed.
Therefore, a pair of peripheral monodromies being disjointed along $\kappa$ is independent of the choice of the path $\gamma$.
However, a different choice of $\kappa$ changes the peripheral monodromies by a non-simultaneous conjugacy.

\begin{proposition}
\label{proposition::faithful monod repr+B bounded systole}
Let $B=\Gamma \backslash \Hyperbolic^2$ be an oriented hyperbolic surface of type $(g,n)$.
Let $F:B\rightarrow \mathcal{M}_h$ be a holomorphic map with an essentially purely pseudo-Anosov monodromy homomorphism $F_* \in \Hom(\pi_1(B,t),Mod_h)$. Then
\begin{enumerate}[(i)]
    \item $F$ is non-constant;
    
    \item the monodromy homomorphism is injective;

    \item each peripheral monodromy is of infinite order;

    \item peripheral monodromies of any two cusps are not disjointed along any geodesic segment $\kappa$ between the boundaries of their cusp regions;
    
    \item $\sys(B)\ge 2\log{2}/(12h-12)$.
\end{enumerate}
\end{proposition}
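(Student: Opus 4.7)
I would handle the five assertions in the order (i), (v), (iii), (ii), (iv), since the last three share a common mechanism: negating any of them forces a non-trivial non-peripheral element of $\pi_1(B,t)$ into $\ker F_*$, contradicting the essentially purely pseudo-Anosov hypothesis.

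For (i), if $F$ were constant then $\widetilde{F}$ would map the connected space $\Hyperbolic^2$ into a fiber of $\Teich_h\to\mathcal{M}_h$, which is a $\Mod_h$-orbit and hence discrete; so $\widetilde{F}$ would itself be constant and $F_*$ would factor through the (finite, by Hurwitz) stabilizer of its common value, whereas any pseudo-Anosov image supplied by the hypothesis has infinite order. For (v), let $[\gamma]\in\pi_1(B,t)$ be represented by the systolic geodesic of $B$: it is essential, hence non-peripheral, so $F_*([\gamma])$ is pseudo-Anosov. Penner's lower bound $\tau(\psi)\ge \log 2/(12h-12)$ for any pseudo-Anosov $\psi\in\Mod_h$, combined with Proposition \ref{proposition::distance-decreasing-2}, then gives $\sys(B)=l_B(\gamma)\ge 2\tau(F_*([\gamma]))\ge 2\log 2/(12h-12)$.

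For (iii), assume some peripheral monodromy $\phi=F_*([\gamma_0])$ has finite order $m$, and pick any non-peripheral $[\beta]\in\pi_1(B,t)$ (such a class exists since $2g-2+n>0$). Then $F_*([[\beta],[\gamma_0]^m])=[F_*([\beta]),\phi^m]=1$, so the commutator lies in $\ker F_*$. It is non-trivial because $\pi_1(B,t)$ is torsion-free and its peripheral cyclic subgroups are malnormal and self-centralizing, hence if $[\beta]$ centralised the non-trivial peripheral element $[\gamma_0]^m$ then $[\beta]$ would itself be peripheral; and it is non-peripheral, which follows from homology whenever some peripheral generator has non-zero image in $H_1(B;\Z)$ (automatic as soon as $n\ge 2$), and from a cyclically-reduced-word argument in the free group $F_{2g}$ when $n=1$, using that $c=\prod_{i=1}^{g}[a_i,b_i]$ is not a proper power. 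This contradicts the essentially purely pseudo-Anosov hypothesis. Statement (ii) is then immediate: a non-trivial $\delta\in\ker F_*$ cannot be non-peripheral (its image would have to be pseudo-Anosov, not $1$), and if $\delta=\eta^k$ with $\eta$ a peripheral generator then $F_*(\eta)$ would have finite order, contradicting (iii). For (iv), if $\phi_i$ and $\phi_j$ were disjointed along $\kappa$ then the multi-twists $\phi_i^{\mu_i}$ and $\phi_j^{\mu_j}$ commute (Dehn twists along disjoint or coinciding curves commute), so $[[\gamma_i]^{\mu_i},[\gamma_j]^{\mu_j}]\in\ker F_*$; the condition $i\ne j$ forces $n\ge 2$, so the homology argument shows this commutator is non-trivial and non-peripheral, again contradicting the hypothesis.

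The principal obstacle I anticipate is the non-peripherality step in (iii) for the once-punctured case $n=1$, where the peripheral class already lies in $[\pi_1(B,t),\pi_1(B,t)]$ and the homology argument is powerless; one must instead invoke the combinatorial structure of $F_{2g}$ (cyclic reduction plus the fact that $c$ is not a proper power) or the equivalent Fuchsian-group statement that the product of two parabolic elements with distinct fixed points in $\pi_1(B)$ is hyperbolic.
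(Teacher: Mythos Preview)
Your proof is correct and follows the same overall strategy as the paper: produce a non-trivial non-peripheral element of $\pi_1(B,t)$ whose image under $F_*$ fails to be pseudo-Anosov. There are two minor differences worth noting. First, for (iv) the paper uses the \emph{product} $\gamma_i^{2\mu_i}\gamma_j^{2\mu_j}$, whose image is a multi-twist (hence reducible), rather than your commutator $[\gamma_i^{\mu_i},\gamma_j^{\mu_j}]$, whose image is the identity; your choice is slightly cleaner since it lands directly in the kernel, whereas the paper must note that the exponent $2$ is genuinely needed (for $(g,n)=(0,3)$ the element $\gamma_i^{\mu_i}\gamma_j^{\mu_j}$ can be peripheral). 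Second, the paper reverses your logical order for (ii) and (iii): it proves injectivity first---via the same commutator mechanism, with the specific choices $[c_j^r,a_1]$ when $g\ge 1$ and $[c_j^r,c_{j'}]$ when $n\ge 2$---and then deduces infinite order of peripheral monodromies from injectivity. On the $n=1$ non-peripherality issue you flag as the principal obstacle: the paper simply asserts that $[c_j^r,a_1]$ is non-peripheral without further comment, so your treatment is, if anything, more scrupulous on this point.
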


\begin{proof}
For (\RNum{1}), we notice that there exists at least one non-trivial non-peripheral element in $\pi_1(B,t)$.
For (\RNum{2}), it suffices to show that each peripheral element cannot be represented by the identity.
Take the group presentation
\begin{equation*}
    \pi_1(B,t)=\langle a_1,b_1,\ldots,a_g,b_g,c_1,\ldots,c_n \mid \prod_i [a_i,b_i] \prod_j c_j = 1 \rangle.
\end{equation*}
with loops $a_1,\ldots,a_g$, $b_1,\ldots,b_g$,$c_1,\ldots,c_n$ at $t$.
Consider a positive power of a peripheral generating loop, say $c_j^r$ with $j=1,\ldots,n$.
When $g\ge 1$ and $n\ge 1$, then $[c_j^r,a_1]\neq 1$ is non-peripheral.
When $n\ge 2$, take $j'\neq j$ and then $[c_j^r,c_{j'}]\neq 1$ is non-peripheral.
In both cases, we get $F_*(c_j^r)\neq 1$.
For (\RNum{3}), a peripheral monodromy $\phi$ must be of infinite order due to the injectivity of the monodromy homomorphism. 

For (\RNum{4}),
let $U_1$, $U_2$ be two distinct cusp regions of $B$ linked by a geodesic segment $\kappa$.
Set ${t_0}=\partial U_1\cap \kappa$ and take an arbitrary path $\gamma$ joining $t$ to $t_0$.
The loop along $\gamma \cup \partial U_1$ based at $t$ that goes once around $U_1$ clockwise is denoted by $\gamma_1$ and its monodromy is denoted by $\phi_1$.
The loop along $\gamma \cup \kappa \cup \partial U_2$ based at $t$ that goes once around $U_2$ clockwise is denoted by $\gamma_2$ and its monodromy is denoted by $\phi_2$.
By Corollary \ref{corollary::peripheral monodromy}, some power $\phi_1^{\mu_1}$ is the multi-twist along a multi-curve $\boldsymbol{\alpha_1}$ and some power $\phi_2^{\mu_2}$ is the multi-twist along a multi-curve $\boldsymbol{\alpha_2}$.
Assume that $\boldsymbol{\alpha}_1\cup\boldsymbol{\alpha}_2$ is a set of disjoint simple closed curves.
We notice that $\gamma_1^{2\mu_1}\cdot \gamma_2^{2\mu_2}$ is non-peripheral of which the monodromy is reducible, which is a contradiction.
The parameters $2$ in $\gamma_1^{2\mu_1}\cdot \gamma_2^{2\mu_2}$ cannot be replaced with $1$.
Indeed, when $(g,n)=(0,3)$ and $\mu_1=\mu_2=1$, $\gamma_1\cdot \gamma_2$ is peripheral but
$\gamma_1^2\cdot \gamma_2^2$ not.

For (\RNum{5}), consider an essential closed geodesic $\gamma\subset B$.
Then there exists a loop $\gamma'$ based at $t$ homotopic to $\gamma$, which is non-trivial and non-peripheral.
By Proposition \ref{proposition::distance-decreasing-2}, we have $(1/2) l_B(\gamma)\ge \tau(F_*([\gamma']))$.
Penner proved in \cite[p.444]{Penner1991} the inequality $\tau(\phi) \ge \log{2}/(12h-12)$, for any pseudo-Anosov mapping class $\phi\in \Mod_h$ (see also \cite[Theorem 14.10]{FarbMargalit2011}).
Thus, $\sys(B)\ge 2\log{2}/(12h-12)$.
\end{proof}

\begin{remark}
One can apply Theorem \ref{thmx::quasi-isometry} - (\RNum{2}) and Theorem \ref{thmx::quasi-isometry-fundamental-domain-n=0} (and Theorem \ref{theorem::quasi-isometry-fundamental-domain} for the general case) to a holomorphic map $F:B\rightarrow \mathcal{M}_h$ with an essentially purely pseudo-Anosov monodromy homomorphism.
Then $F$ is a quasi-isometric immersion with parameters depending only on $(g,n)$ and $h$.
In addition, the lift $\widetilde{F}:\Hyperbolic^2\rightarrow \Teich_h$ restricted to any convex fundamental polygon $D$ with exactly $n$ ideal points is a quasi-isometric embedding.
\end{remark}

\subsection{Mumford’s compactness}
\label{subsection::Mumford}

Let $\epsilon>0$ be an arbitrary real number.
For the Teichm\"uller metric, the moduli space $\mathcal{M}_{g,n}$ has an infinite diameter. In \cite{Mumford1971}, however, Mumford introduced that the $\epsilon$-thick part of the moduli space that consists of hyperbolic surfaces $X$ with $\sys(X)\ge \epsilon$ is compact.

Let $\Teich_{g,n}^{\ge \epsilon}$ be the set of equivalent classes of
marked hyperbolic surfaces whose systole is bounded below by $\epsilon$.
The action of $\Mod_{g,n}$ on $\Teich_{g,n}$ preserves the systole and therefore we take the quotient space of the thick part, denoted by $\mathcal{M}_{g,n}^{\ge \epsilon}$.
We have the following compactness result due to Mumford:

\begin{theorem}[Mumford]
The $\epsilon$-thick part $\mathcal{M}_{g,n}^{\ge \epsilon}$ of the moduli space $\mathcal{M}_{g,n}$ is a compact subset.
\end{theorem}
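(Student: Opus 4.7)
The plan is to show $\mathcal{M}_{g,n}^{\ge \epsilon}$ is the continuous image of a compact subset of Teichm\"uller space, expressed in Fenchel--Nielsen coordinates. The two ingredients are Bers' short pants decomposition theorem (to bound lengths from above) and the Dehn twist action along the pants curves (to bound the twist parameters).

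I would first fix a reference pants decomposition $P_0 = \{\beta_1, \ldots, \beta_{3g-3+n}\}$ on $\Sigma_{g,n}$, which yields global Fenchel--Nielsen coordinates identifying $\Teich_{g,n}$ with $\R_{>0}^{3g-3+n} \times \R^{3g-3+n}$ via $[(X,f_X)] \mapsto (\ell_i, \tau_i)_{i=1}^{3g-3+n}$ with $\ell_i = L_{\beta_i}([(X,f_X)])$. By Bers' theorem there exists a constant $L = L(g,n)$ such that every hyperbolic surface of type $(g,n)$ admits a pants decomposition whose curves all have length at most $L$. Since any two pants decompositions of $\Sigma_{g,n}$ are related by an element of $\Mod_{g,n}$, after precomposing the marking with an appropriate mapping class I may assume that the Bers decomposition of $X$ coincides with $f_X(P_0)$. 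Then for a marked hyperbolic surface representing a class in $\mathcal{M}_{g,n}^{\ge \epsilon}$ with this normalization, each coordinate satisfies $\epsilon \le \ell_i \le L$.

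Next, the Dehn twist $T_{\beta_i} \in \Mod_{g,n}$ acts on Fenchel--Nielsen coordinates by $\tau_i \mapsto \tau_i + \ell_i$ while preserving the other coordinates, so composing the marking with a suitable product of powers of $T_{\beta_1}, \ldots, T_{\beta_{3g-3+n}}$ further brings each $\tau_i$ into $[0, \ell_i] \subset [0, L]$. Consequently, every class in $\mathcal{M}_{g,n}^{\ge \epsilon}$ admits a representative in the compact box
\begin{equation*}
K \coloneqq [\epsilon, L]^{3g-3+n} \times [0, L]^{3g-3+n} \subset \R_{>0}^{3g-3+n} \times \R^{3g-3+n}.
\end{equation*}
The image of $K$ under the continuous projection $\Teich_{g,n} \rightarrow \mathcal{M}_{g,n}$ is a compact subset of $\mathcal{M}_{g,n}$ containing $\mathcal{M}_{g,n}^{\ge \epsilon}$. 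Since the systole, as an infimum of continuous geodesic length functions, is upper semi-continuous on $\mathcal{M}_{g,n}$, the set $\mathcal{M}_{g,n}^{\ge \epsilon} = \{\sys \ge \epsilon\}$ is closed in this compact image, hence compact.

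The genuine geometric content lies in Bers' theorem; once that is granted, the remainder is a formal use of Fenchel--Nielsen coordinates and the Dehn twist action. The main obstacle is thus the proof of the uniform upper bound for the lengths in some pants decomposition, classically established by a careful induction based on the collar lemma.
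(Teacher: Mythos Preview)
The paper does not prove this theorem; it merely states it and cites Mumford's original 1971 paper. So there is no paper proof to compare against.

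Your argument is correct and is the standard modern proof (essentially the one in Farb--Margalit, Theorem~12.6): use Bers' constant to bound the length parameters from above, the systole hypothesis to bound them from below, and Dehn twists along the pants curves to normalize the twist parameters into a bounded interval; then observe that $\mathcal{M}_{g,n}^{\ge\epsilon}$ is a closed subset of the continuous image of the resulting compact Fenchel--Nielsen box. The only remark I would add is that the systole is in fact continuous on $\mathcal{M}_{g,n}$ (locally it is a minimum over finitely many length functions), though upper semicontinuity, which you correctly obtain as an infimum of continuous functions, is all that is needed for closedness of $\{\sys\ge\epsilon\}$.
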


Form now on, we fix a base point $[(X_0,f_{X_0})]$ in $\Teich_{g,n}$ that is represented by a marked complete hyperbolic surface $(X_0,f_{X_0})$.
We fix a base point $s\in\Sigma_{g,n}$ and fix the oriented loops
\begin{equation*}
\gamma_{1,1},\gamma_{1,2},\ldots,
\gamma_{g,1},\gamma_{g,2},
\gamma'_1,\ldots,\gamma'_n
\subset \Sigma_{g.n}
\end{equation*}
at $s$ satisfying the following conditions:
(\RNum{1}) $\Int(\gamma_{i,1},\gamma_{i,2})=1$, for each $i$, but the intersection number of any other two distinct loops is $0$,
(\RNum{2}) $\gamma'_j$ goes round the $j$-th puncture exactly once clockwise, for each $j$, and
(\RNum{3}) the fundamental group $\pi_1(\Sigma_{g,n},s)$ is generated by classes of these loops with the relation
$\prod_{i=1}^g [[\gamma_{i,1}],[\gamma_{i,2}]] \prod_{j=1}^n [\gamma'_j]=1$.
The loops $\gamma_{i,j}$, $\gamma'_k$ are called the \emph{standard loops} of $\Sigma_{g,n}$.

In contrast, a collection of loops 
$\widehat{\gamma_{i,1}}$, $\widehat{\gamma_{i,2}}$ $\widehat{\gamma'_j} \subset \Sigma_{g,n}$,
for $i=1,\ldots, g$, $j=1,\ldots,n$, based at some point $\widehat{s} \in\Sigma_{g,n}$ satisfying conditions (\RNum{1}), (\RNum{2}) and (\RNum{3}) implies an orientation preserving diffeomorphism $\phi:\Sigma_{g,n}\rightarrow\Sigma_{g,n}$ such that $\phi(s)=\widehat{s}$ and each $\phi(\gamma_{i,1})$ (resp. $\phi(\gamma_{i,2})$, $\phi(\gamma_j)$) is homotopic to $\widehat{\gamma_{i,1}}$ (resp. $\widehat{\gamma_{i,2}}$, $\widehat{\gamma'_j}$) relative to $\widehat{s}$.

Set $t_0=f_{X_0}(s)\in X_0$.
The set $\{[f_{X_0}(\gamma)]\mid \text{$\gamma$ is a standard loop}\}$ forms a generating set of $\pi_1(X_0,t_0)$ such that the length of each $f_{X_0}(\gamma)$ is determined by $(g,n)$.
In other words, there exists a constant $N(g,n)$ that depends only on $(g,n)$ such that
\begin{equation*}
    l_{X_0}(f_{X_0}(\gamma)) \le N(g,n)
\end{equation*}
for each standard loop $\gamma$ of $\Sigma_{g,n}$.
We further notice the following lemma.

\begin{lemma}
    Suppose that $(Y,f_Y)$ is a marked hyperbolic surface and $\psi:X_0\rightarrow Y$ is a $K$-quasiconformal diffeomorphism. Then there exists an orientation preserving diffeomorphism $f'_Y:\Sigma_{g,n}\rightarrow Y$ such that,
    for each standard loop $\gamma$,
    the image $f'_Y(\gamma)\subset Y$ is homotopic to a loop of length bounded above by $N'(g,n,K)$ relative to $f'_Y(s)$,
    where $N'(g,n,K)$ depends only on $g$, $n$ and $K$.
\end{lemma}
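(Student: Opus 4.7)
The strategy is to produce $f'_Y$ by finding, on $Y$ itself, a base point $y_0$ and a standard loop system of uniformly bounded length, and then invoking the characterization of diffeomorphisms by standard loop systems recalled just before the lemma.

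First, $\psi$ gives a $K$-quasiconformal equivalence of the marked surfaces $(X_0, f_{X_0})$ and $(Y, \psi\circ f_{X_0})$ respecting the markings, so by \eqref{equation::Teichmuller distance} their Teichm\"uller distance is at most $(1/2)\log K$. By Wolpert's lemma, the geodesic length of any closed curve on $Y$ is within a multiplicative factor $K$ of its length on $X_0$. Since $X_0$ is fixed, this bounds $\sys(Y)$ below by $\sys(X_0)/K =: \epsilon(g, n, K) > 0$.

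Second, I claim that there exists $N'(g,n,K)$ such that every hyperbolic surface $Y$ of type $(g,n)$ with $\sys(Y)\ge \epsilon(g,n,K)$ admits a base point $y_0\in Y$ and loops $\widehat{\gamma_{i,j}}, \widehat{\gamma'_k}$ based at $y_0$ satisfying the standard loop conditions i, ii, iii with all lengths at most $N'(g,n,K)$. To see this, on any individual surface one first produces a short Bers pants decomposition and then completes it to a standard loop system using short connecting arcs in the thick part of $Y$. A uniform bound on the minimal maximum length of such a system then follows from upper semicontinuity of this quantity on the compact space $\mathcal{M}_{g,n}^{\ge\epsilon}$ (Mumford's compactness).

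Finally, by the characterization recalled just before the lemma (applied on $Y$, which is diffeomorphic to $\Sigma_{g,n}$), the data $(y_0, \{\widehat{\gamma_{i,j}}, \widehat{\gamma'_k}\})$ determines an orientation-preserving diffeomorphism $f'_Y: \Sigma_{g,n}\to Y$ with $f'_Y(s) = y_0$ sending each standard loop $\gamma$ to something homotopic (relative to $y_0$) to the corresponding $\widehat{\gamma}$, which is the required $f'_Y$. The main obstacle is producing the short loops $\widehat{\gamma_{i,j}}, \widehat{\gamma'_k}$ satisfying the specific intersection-number and relation conditions i, ii, iii (not just any short generating set) uniformly over all thick surfaces; while the uniform bound is immediate from compactness once existence on each individual surface is established, threading short connecting arcs consistently through a Bers pants decomposition and through a common base point requires some care.
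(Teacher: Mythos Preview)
Your proof is correct in outline but takes a route that differs from (and in a sense inverts) the paper's development. The paper does not use Mumford's compactness here. Instead it writes down an explicit filling collection of free loops $\delta_0,\ldots,\delta_{2g+n-1+[n=0]}$ on $\Sigma_{g,n}$, each a short word in the standard loops; by Wolpert's lemma the geodesics on $Y$ homotopic to $\psi\circ f_{X_0}(\delta_i)$ have total length at most $4(2g+n)e^{2K}N(g,n)$. Since this geodesic graph $\Delta_Y$ fills $Y$, any point $t_Y\in\Delta_Y$ serves as base point, and a standard loop system through $t_Y$ can be read off from $\Delta_Y$ with the explicit bound $N'=16(2g+n)e^{2K}N(g,n)$.

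Your argument instead bounds $\sys(Y)$ from below via Wolpert and then appeals to Mumford's compactness of $\mathcal{M}_{g,n}^{\ge\epsilon}$. Note that the claim in your Step~2 is precisely the content of the paper's Theorem~\ref{theorem::question 2_for_b=0}, which the paper derives \emph{from} the present lemma (Mumford gives a bounded-diameter lift in $\Teich_{g,n}$, hence a quasiconformal map $X_0\to Y$ with controlled dilatation, and then the lemma applies). So you have effectively merged the lemma and Theorem~\ref{theorem::question 2_for_b=0} into a single compactness argument. The trade-off: the paper's filling-curve construction is self-contained and yields explicit constants, while your route is softer and non-constructive; also, your upper-semicontinuity claim for the minimal-length functional on $\mathcal{M}_{g,n}^{\ge\epsilon}$, though true, deserves a word of justification (e.g., lift to Teichm\"uller space and use that nearby points are related by quasiconformal maps of dilatation close to~$1$, which are uniformly bi-Lipschitz on the thick part).
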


\begin{proof}
One can construct $2g+n+[n=0]$ free loops on $\Sigma_{g,n}$, where $[n=0]=1$ if $n=0$ and $[n=0]=0$ otherwise.
These loops, denoted by $\delta_0,\ldots,\delta_{2g+n-1+[n=0]}$, form a set of closed curves that fill up the surface $\Sigma_{g,n}$ and
each $\delta_i$ is the concatenation of at most four standard loops.

Let $\Delta_Y$ be the union of geodesics homotopic to every $\psi\circ f_{X_0}(\delta_i)$. By Wolpert's Lemma \ref{lemma::wolpert},
\begin{equation*}
    l_Y(\Delta_Y)=\sum_{i=0}^{2g+n-1+[n=0]} L_{\delta_i}([(Y,f_Y)])\le e^{2K}\sum_{i=0}^{2g+n-1+[n=0]} L_{\delta_i}([(X_0,f_{X_0})])
    \le 4(2g+n)e^{2K}\cdot N(g,n).
\end{equation*}
Let $t_Y\in\Delta_Y$ be an arbitrary point.
Therefore, there exists loops $\widehat{\gamma_{i,1}},\widehat{\gamma_{i,2}}$ and $\widehat{{\gamma'}_j}$ at $t_Y$ on $Y$, for $i=1,\ldots,g$ and $j=1,\ldots,n$, that satisfy conditions (\RNum{1}), (\RNum{2}) and (\RNum{3}), whose lengths are bounded above by
$4\cdot 4(2g+n)e^{2K}N(g,n)
\eqqcolon N'$.
We conclude that there exists a diffeomorphism $f'_Y$ sending $s\in\Sigma_{g,n}$ to $t_Y$ such that, for each standard loop $\gamma$, the homotopy class of $f'_Y(\gamma)$ relative to $t_Y$ is represented by a loop of length bounded above by $N'$.
\end{proof}

We aim at addressing the following question:
given a hyperbolic surface $X$ of type $(g,n)$, does there exist an orientation preserving diffeomorphism $f'_X:\Sigma_{g,n}\rightarrow X$ whose image of each standard loop is homotopic to a loop of short length relative to $f'_X(s)$?




Consider a non-trivial mapping class $[\phi]$ represented by a diffeomorphism $\phi:\Sigma_{g,n}\rightarrow\Sigma_{g,n}$.
Then, the marked hyperbolic surfaces $(X,f_X)$ and $(X,f_X\circ\phi^{-1})$ are the same hyperbolic surface with different markings.
Therefore, they possess desired diffeomorphisms for this question simultaneously.
Using Mumford's compactness of $\mathcal{M}_{g,n}^{\ge\epsilon}$, we get the following theorem.

\begin{theorem}
\label{theorem::question 2_for_b=0}
There exists a constant $N''=N''(g,n,\epsilon)$ that depends only on $(g,n)$, $\epsilon$ and satisfies the following:
Given an arbitrary hyperbolic surface $Y$ of type $(g,n)$ with $\sys(Y)\ge \epsilon$,
there exists an orientation preserving diffeomorphism $f'_Y:\Sigma_{g,n}\rightarrow Y$ such that for each standard loop $\gamma\subset\Sigma_{g,n}$,
the image $f'_Y(\gamma)$ is homotopic to a loop whose length is bounded above by $N''$ relative to $f'_Y(s)$.
\end{theorem}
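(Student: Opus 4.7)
The plan is to combine Mumford's compactness with the preceding lemma. The preceding lemma already reduces the problem to finding, for any $Y$ with $\sys(Y)\ge \epsilon$, a marking $f_Y:\Sigma_{g,n}\to Y$ such that the Teichm\"uller distance $d_{\Teich}([(X_0,f_{X_0})],[(Y,f_Y)])$ is bounded above by a constant depending only on $g$, $n$, $\epsilon$.

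First I would observe that $\sys$ descends to a continuous function on $\mathcal{M}_{g,n}$, so the $\epsilon$-thick part $\mathcal{M}_{g,n}^{\ge \epsilon}$ is a closed (and by Mumford's theorem compact) subset of the moduli space. Since the function $q\mapsto d_{\mathcal{M}}([(X_0,f_{X_0})],q)$ on $\mathcal{M}_{g,n}$ is continuous (the projection $\Teich_{g,n}\to \mathcal{M}_{g,n}$ is continuous and $d_{\mathcal{M}}$ is defined as the quotient pseudo-distance), its restriction to the compact set $\mathcal{M}_{g,n}^{\ge\epsilon}$ attains a finite maximum, which I will denote by $D_0=D_0(g,n,\epsilon)$.

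Next, given any hyperbolic surface $Y$ of type $(g,n)$ with $\sys(Y)\ge \epsilon$, its image in $\mathcal{M}_{g,n}^{\ge \epsilon}$ satisfies $d_{\mathcal{M}}([(X_0,f_{X_0})],[Y])\le D_0$. By definition of $d_{\mathcal{M}}$ as an infimum over lifts, and using that $\Mod_{g,n}$ acts properly discontinuously on $\Teich_{g,n}$ so that $\Mod_{g,n}$-orbits are closed, there exists a marking $f_Y:\Sigma_{g,n}\to Y$ such that
\begin{equation*}
d_{\Teich}\bigl([(X_0,f_{X_0})],[(Y,f_Y)]\bigr)\le D_0+1.
\end{equation*}
Unfolding the definition of the Teichm\"uller distance, this produces a quasiconformal diffeomorphism $\psi:X_0\to Y$ homotopic to $f_Y\circ f_{X_0}^{-1}$ with dilatation $K(\psi)\le \exp(2(D_0+1))\eqqcolon K_0(g,n,\epsilon)$.

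Finally I would feed the triple $(Y,f_Y,\psi)$ into the previous lemma, which yields an orientation preserving diffeomorphism $f'_Y:\Sigma_{g,n}\to Y$ such that for every standard loop $\gamma$ the image $f'_Y(\gamma)$ is homotopic relative to $f'_Y(s)$ to a loop of length at most $N'(g,n,K_0)$. Setting $N''(g,n,\epsilon):=N'(g,n,K_0(g,n,\epsilon))$ gives the desired uniform constant. The only non-cosmetic step is the existence of the bound $D_0$, which is precisely where Mumford's compactness is used; everything else is a bookkeeping rephrasing of the preceding lemma.
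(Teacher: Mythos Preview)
Your proof is correct and follows essentially the same strategy as the paper: use Mumford's compactness to bound the Teichm\"uller distance from the base point $[(X_0,f_{X_0})]$ to a suitably marked $[(Y,f_Y)]$, extract a quasiconformal map of controlled dilatation, and feed this into the preceding lemma to produce $f'_Y$ and the constant $N''=N'(g,n,K_0)$. The paper phrases the compactness step as lifting $\mathcal{M}_{g,n}^{\ge\epsilon}$ to a bounded region $U\subset\Teich_{g,n}$ containing the base point, whereas you bound the moduli distance function directly and then pick a marking; these are equivalent reformulations of the same idea. One small remark: the appeal to closed orbits is unnecessary, since the definition of $d_{\mathcal{M}}$ as an infimum already guarantees a marking with $d_{\Teich}\le D_0+1$.
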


\begin{proof}
By Mumford's compactness, the moduli space $\mathcal{M}_{g,n}^{\ge\epsilon}$ is compact and
therefore there exists a lift $U\subset \Teich_{g,n}$ of $\mathcal{M}_{g,n}^{\ge\epsilon}$ containing $[X_0,f_{X_0}]$ whose diameter for the Teichm\"uller metric is bounded by a constant $D=D(g,n,\epsilon)>0$ that depends only on $(g,n)$ and $\epsilon$.
Suppose that $f_Y$ is an arbitrary marking of $Y$ so that $[(Y,f_Y)]\in\Teich_{g,n}^{\ge\epsilon}$.
Then, there exists an orientation preserving diffeomorphism $\phi:\Sigma_{g,n}\rightarrow\Sigma_{g,n}$ with $[(Y,f_Y\circ\phi^{-1})]\in U$.
Moreover, there exists a $K$-quasiconformal diffeomorphism $\psi:X_0\rightarrow Y$ homotopic to $(f_Y\circ\phi^{-1}) \circ f_{X_0}^{-1}$, where $e^{2K}\le D$.
Let $f'_Y:\Sigma_{g,n}\rightarrow Y$ be the diffeomorphism as in the previous lemma and
take $N''=N'(g,n,K)$.
Therefore, for each standard loop $\gamma$ on $\Sigma_{g,n}$, the image $f'_Y(\gamma)$ is homotopic to a loop of length bounded above by $N''$ relative to $f'_Y(s)$.
\end{proof}

\section{Quasi-isometric rigidity}

Suppose that
$B$ is a hyperbolic surface of type $(g,n)$ and
$F:B\rightarrow \mathcal{M}_h$ is a holomorphic map.
In this section, we introduce the rigidity result, which claims that the holomorphic curve $F(B)$ is very similar to a Teichm\"uller curve.

\subsection{Irreducibility of holomorphic curves}
\label{subsection::irreducibility}

We say a representation $\rho: \pi_1(B,t) \rightarrow \Mod_h$ is \emph{reducible} if the image subgroup $\rho(\pi_1(B,t))$ preserves a set of homotopy classes
$\{ [\alpha_1],\ldots,[\alpha_{h'}] \}$
represented by disjoint simple closed curves on $\Sigma_h$, with $h' \ge 1$.

The following proposition is known as the \emph{irreducibility} of holomorphic curves in the moduli space.
It first appeared in \cite{ImayoshiShiga1988} as part of the proof of the Parshin-Arakelov finiteness.
Shiga later formally formulated this result in \cite{Shiga1997}.
For the case of $n=0$, the irreducibility is a consequence of \cite[Theorem 5.7]{Daskalopoulos-Wentworth2007} and
is also proved using the maximum principle for subharmonic functions \cite[Theorem 3.1]{McMullen2000}).

\begin{proposition}[Theorem 1 in \cite{Shiga1997}]
\label{proposition::no_disjoint_simple_loops_preserved}
Let $B$ be an oriented hyperbolic surface of type $(g,n)$ and 
$F:B\rightarrow \mathcal{M}_h$ be a non-constant holomorphic map.
Then, every monodromy homomorphism
$F_*:\pi_1(B,t)\rightarrow \Mod_h$
of $F$ is irreducible.
\end{proposition}

By this irreducibility, we have the following.

\begin{theorem}
\label{theorem::Ct_belongs_to_compactK}
There exist a constant $N''=N''(g,n,\epsilon)$ depending only on $(g,n)$, $\epsilon$ and a compact subset $\mathcal{K}'=\mathcal{K}'(g,n,h,\epsilon)\subset\mathcal{M}_h$ depending only on $(g,n),h,\epsilon$, that satisfy the following:
Let $B$ be an oriented hyperbolic surface of type $(g,n)$ with $\sys(B)\ge \epsilon$
and let $F:B\rightarrow \mathcal{M}_h$ be a non-constant holomorphic map.
Then, there exists an orientation preserving diffeomorphism $f'_B:\Sigma_{g,n}\rightarrow B$ such that
\begin{enumerate}[(i)]
\item $f'_B$ maps the base point $s\in\Sigma_{g,n}$ to $t'_B\in B$, with $F(t'_B)\in \mathcal{K}'$;
\item for each standard loop $\gamma\subset\Sigma_{g,n}$, the image $f'_B(\gamma)\subset B$ is homotopic to a loop relative to $t'_B$ with length bounded above by $N''$.
\end{enumerate}
\end{theorem}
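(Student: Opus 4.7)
The plan is to split the proof into two parts: (a) a non-emptiness statement that produces the compact set $\mathcal{K}'$, namely the existence of $\epsilon' = \epsilon'(g,n,h,\epsilon) > 0$ with $F(B) \cap \mathcal{M}_h^{\geq \epsilon'} \neq \emptyset$ for every non-constant holomorphic map $F : B \to \mathcal{M}_h$ with $\sys(B) \geq \epsilon$; and (b) an application of Theorem \ref{theorem::question 2_for_b=0} to produce the diffeomorphism $f'_B$ with the specified basepoint. We then set $\mathcal{K}' = \mathcal{M}_h^{\geq \epsilon'}$, compact by Mumford's theorem.

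To prove (a), I would take $\epsilon'$ strictly smaller than the Margulis constant of genus-$h$ hyperbolic surfaces, so that simple closed curves of length $<\epsilon'$ at a point of the $\epsilon'$-thin part of $\Teich_h$ form a non-empty multi-curve. Assuming for contradiction that $F(B) \subset \mathcal{M}_h^{<\epsilon'}$, at every $\widetilde{t} \in \Hyperbolic^2$ one obtains a non-empty multi-curve $\boldsymbol{\alpha}(\widetilde{t})$ on $\Sigma_h$ transforming equivariantly as $\boldsymbol{\alpha}(\phi\widetilde{t}) = F_*(\phi)\boldsymbol{\alpha}(\widetilde{t})$. The goal is to extract a monodromy-invariant multi-curve, contradicting Proposition \ref{proposition::no_disjoint_simple_loops_preserved}. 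I would mimic the proof of that proposition: since $\widetilde{F}(\Hyperbolic^2) \subset \Teich(\Gamma_0)$ is bounded in $B_2(\mathcal{L},\Gamma_0)$ by Lemma \ref{lemma::Bers-Maskit slice}(i), Fatou's theorem provides, for almost every $\zeta \in \partial\Hyperbolic^2$, a non-tangential boundary limit $q \in \partial\Teich(\Gamma_0)$; the argument of Proposition \ref{proposition::no_disjoint_simple_loops_preserved} forces $\Gamma_q$ to be totally degenerate and free of accidental parabolics for almost every $\zeta$. By Lemma \ref{lemma::abikoff}, the length $L_\alpha(\widetilde{F}(\widetilde{t}_n))$ tends to $+\infty$ along such sequences for any fixed simple closed curve $\alpha$. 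Since each $\boldsymbol{\alpha}(\widetilde{t}_n)$ has at most $3h-3$ components, a pigeonhole over the countable set of simple closed curves on $\Sigma_h$ yields a single $\alpha$ belonging to $\boldsymbol{\alpha}(\widetilde{t}_n)$ for infinitely many $n$, which has length $<\epsilon'$ there and yet diverges to $+\infty$, a contradiction.

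For (b), having obtained non-emptiness, I would pick $t'_B \in F^{-1}(\mathcal{K}')$ in the $\epsilon/2$-thick part of $B$. Such a choice is possible: points deep in a cusp $U_i$ with peripheral monodromy $\phi$ of infinite order map via $F$ into the thin part of $\mathcal{M}_h$, since by Corollary \ref{corollary::peripheral monodromy} a power $\phi^{\mu}$ is a multi-twist along curves whose short translation at $F(\text{deep})$ forces them to be pinched there; cusps whose peripheral monodromy is of finite order correspond to removable singularities over which $F$ extends, again restricting the preimages to a bounded region. Hence $F^{-1}(\mathcal{K}')$ meets $B_{\text{thick}}$. Then Theorem \ref{theorem::question 2_for_b=0} applied to $B$ yields a diffeomorphism $g : \Sigma_{g,n} \to B$ with $g(s)$ lying in the filling geodesic system $\Delta_B \subset B_{\text{thick}}$ and each standard loop mapping to a class with a representative of length $\leq N''(g,n,\epsilon)$. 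Since $B_{\text{thick}}$ has diameter bounded by a constant depending only on $(g,n,\epsilon)$, post-composing $g$ with an ambient diffeomorphism of $B$ supported in a neighborhood of a bounded-length path from $g(s)$ to $t'_B$ yields the required $f'_B$, increasing the minimal representatives of the standard loops by at most an additive constant absorbed into $N''$.

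The main obstacle is the extraction of the monodromy-invariant multi-curve in part (a): the multi-curve $\boldsymbol{\alpha}(\widetilde{t})$ jumps combinatorially under wall-crossings in $\Hyperbolic^2$, so the pigeonhole argument must be interwoven carefully with the analytic boundary behaviour of $\widetilde{F}$ in the Bers-Maskit slice, paralleling and strengthening the reasoning behind Proposition \ref{proposition::no_disjoint_simple_loops_preserved}.
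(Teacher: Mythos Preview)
Your pigeonhole step in part (a) does not go through, and you already sense this in your final paragraph. Given a sequence of multi-curves $\boldsymbol{\alpha}(\widetilde{t}_n)$, each of cardinality at most $3h-3$ but drawn from the \emph{infinite} set of isotopy classes, nothing forces any fixed curve to recur infinitely often; indeed Abikoff's lemma tells you precisely the opposite, that every fixed $\alpha$ eventually leaves $\boldsymbol{\alpha}(\widetilde{t}_n)$. So the short multi-curve genuinely drifts, and the boundary argument yields no contradiction. There is no evident way to ``interweave'' this with the analytic boundary behaviour to rescue it, because Proposition~\ref{proposition::no_disjoint_simple_loops_preserved} needs a \emph{globally} monodromy-invariant multi-curve, not a pointwise short one.

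The paper avoids this entirely by reversing the order of your two steps. It first applies Theorem~\ref{theorem::question 2_for_b=0} to obtain $f'_B$ and $t'_B=f'_B(s)$ with every standard generating loop represented by a loop of length $\le N''$; \emph{only then} does it bound $\sys(F(t'_B))$. The point is that by Proposition~\ref{proposition::distance-decreasing-1} each monodromy generator moves $\widetilde{F}(\widetilde{t'_B})$ by at most $N''/2$ in $\Teich_h$, hence by Wolpert distorts geodesic lengths by a bounded factor. Now work at the single point $\widetilde{F}(\widetilde{t'_B})$: if its systole were below $\tau_h$ divided by a suitable power of this distortion factor, then among the at most $3h-3$ geodesics of length $<\tau_h$ (Lemma~\ref{lemma::3h-3_simple_loops}) one finds, by a \emph{finite} pigeonhole on length scales, a non-empty stratum of ``shortest several'' curves that every generator of the monodromy must preserve. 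This is exactly the invariant multi-curve forbidden by Proposition~\ref{proposition::no_disjoint_simple_loops_preserved}. No boundary limits, no basepoint relocation, and no part (b) are needed: the $t'_B$ handed to you by Theorem~\ref{theorem::question 2_for_b=0} already lands in $\mathcal{K}'$.
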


Theorem \ref{theorem::Ct_belongs_to_compactK} is a continuation of Theorem \ref{theorem::question 2_for_b=0},
for which
the constant $N''(g,n,\epsilon)$ and the orientation preserving diffeomorphism $f'_B$ are inherited.
We start with a well-known lemma on closed hyperbolic surface, which will imply that $\sys(F(t'_B))$ is bounded.
The proof of Theorem \ref{theorem::Ct_belongs_to_compactK} proceeds similarly to the proof of \cite[Theorem 3.1]{McMullen2000}.

\begin{lemma}[Corollary 13.7 in \cite{FarbMargalit2011}]
\label{lemma::3h-3_simple_loops}
    There exists a constant $\tau_h>0$ depending only on $h$ such that on any closed oriented hyperbolic surface $X$ of genus $h$, if $\{\alpha_1,\ldots,\alpha_{h'}\}$ is the set of closed geodesics of length smaller than $\tau_h$, then $\Int(\alpha_i,\alpha_j)=0$ and $h'\le 3h-3$.
\end{lemma}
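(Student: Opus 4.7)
The plan is to split the statement into a geometric part (disjointness of sufficiently short closed geodesics) and a topological part (the bound $h'\le 3h-3$). The first relies on the collar lemma together with the Margulis lemma in dimension two; the second is the standard pants-decomposition count. A suitable choice is $\tau_h = 2\arcsinh(1)$, which in fact does not depend on $h$ at all.

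For the geometric part, I would first argue that any closed geodesic $\gamma\subset X$ with $l_X(\gamma)<2\arcsinh(1)$ is simple. This is a standard consequence of the Margulis lemma: if $\gamma$ had a self-intersection at a point $p$, then two distinct loops at $p$ of length $\le l_X(\gamma)$ would represent non-commuting elements of $\pi_1(X,p)$, whereas the Margulis lemma forces the subgroup generated by elements of translation length $<2\arcsinh(1)$ to be cyclic. Next, given two distinct simple closed geodesics $\alpha_i,\alpha_j$ in the collection, the collar lemma provides an embedded annular neighborhood $C_i$ of $\alpha_i$ of width $w_i=\arcsinh\bigl(1/\sinh(l_X(\alpha_i)/2)\bigr)$, and the bound $l_X(\alpha_i)<2\arcsinh(1)$ gives $w_i>\arcsinh(1)$. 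If $\alpha_j$ intersected $\alpha_i$ transversally at some $p$, then the subarc of $\alpha_j$ between its two intersections with $\partial C_i$ around $p$ would be a geodesic arc joining the two boundary circles of the collar through $\alpha_i$, hence of length at least $2w_i>2\arcsinh(1)$. This would contradict $l_X(\alpha_j)<2\arcsinh(1)$, so $\Int(\alpha_i,\alpha_j)=0$.

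For the topological part, the $\alpha_i$ are now pairwise disjoint simple closed geodesics, and distinct closed geodesics on a hyperbolic surface are never freely homotopic, so $\{\alpha_1,\ldots,\alpha_{h'}\}$ forms a multicurve of pairwise non-isotopic essential simple closed curves on $X$. On a closed oriented surface of genus $h\ge 2$, any such multicurve extends to a pants decomposition, which contains exactly $3h-3$ curves; therefore $h'\le 3h-3$. The only delicate step is the collar-based length estimate in the previous paragraph — one must verify that a geodesic entering and exiting the annular collar $C_i$ through a transverse intersection with $\alpha_i$ has length at least $2w_i$, which follows from the fact that the distance from $\partial C_i$ to $\alpha_i$ along any path is at least $w_i$. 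Once this is granted, everything else reduces to standard facts about hyperbolic surfaces and their multicurves.
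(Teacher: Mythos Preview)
The paper does not provide its own proof of this lemma; it simply cites Corollary~13.7 of Farb--Margalit. Your argument is exactly the standard one behind that result and is correct: the collar lemma yields disjointness of short simple closed geodesics, and the pants-decomposition count gives $h'\le 3h-3$.

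The only place needing a little more care is your invocation of the Margulis lemma with the specific constant $2\arcsinh(1)$ for the simplicity step. What the Margulis lemma gives you is that loops at $p$ of length below the two-dimensional Margulis constant generate a cyclic subgroup; you are tacitly asserting that this constant is at least $2\arcsinh(1)$, which is not obvious. Since the lemma only claims the \emph{existence} of some universal $\tau_h$, nothing is lost by taking $\tau_h$ to be the minimum of $2\arcsinh(1)$ (which suffices for your collar-lemma disjointness argument) and the Margulis constant (which suffices for simplicity). With that adjustment your proof is complete, and indeed shows---as you note---that $\tau_h$ can be chosen independently of $h$.
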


\begin{proof}[Proof of Theorem \ref{theorem::Ct_belongs_to_compactK}]
We aim to show that $\sys(F(t'_B))\ge \tau_h/N''^{3h-3}$ and therefore take $\mathcal{K}'=\mathcal{M}_h^{\ge \tau_h/N''^{3h-3}}$, which is a compact subset by Mumford’s compactness.
Suppose that $F(t'_B)$ has a hyperbolic representative $(X,f_X)$ and assume that $\sys(X)<\tau_h/N''^{3h-3}$.
By Lemma \ref{lemma::3h-3_simple_loops},
let $\alpha_1,\ldots,\alpha_{h'}$ be closed geodesics on $X$ of length smaller than $\tau_h$, with $h'\le 3h-3$.

Without loss of generality, assume $l_X(\alpha_1) \le \cdots \le l_X(\alpha_{h'})$.
Since $h' \le 3h-3$ while $\sys(X) = l_X(\alpha_1)$, there must exist a significant length gap, i.e., $N'' \cdot l_X(\alpha_i) < l_X(\alpha_{i+1})$ for some index $i$.
By Wolpert's Lemma \ref{lemma::wolpert},
the preimages $f_X^{-1}(\alpha_1), \ldots, f_X^{-1}(\alpha_i)$
form a set of homotopy classes on $\Sigma_h$
that is preserved by $F_*(g)\in\Mod_h$ for each $g\in \pi_1(B,t'_B)$,
which contradicts Proposition \ref{proposition::no_disjoint_simple_loops_preserved}.
\end{proof}

\begin{remark}
Consequently, we have the uniform boundedness for Parshin-Arakelov finiteness as follows.
Fix $s\in \Sigma_{g,n}$.
Given a holomorphic map $F:B\rightarrow \mathcal{M}_h$ and a monodromy homomorphism $F_*\in \Hom(\pi_1(B,t),\Mod_h)$,
there exists an orientation preserving diffeomorphism $f_B:\Sigma_{g,n}\rightarrow B$ such that $f_B(s)=t$.
Therefore, $f_B$ pulls $F_*$ back to $(f_B)^*(F_*)=F_*\circ (f_B)_* \in \Hom(\pi_1(\Sigma_{g,n},s),\Mod_h)$.
A different choice of $f_B$, however, does not change the corresponding class in
\begin{equation}
\label{equation::double_quotient_space}
    M_{g,n,h}\coloneqq \Mod_{g,n} \backslash \Hom(\pi_1(\Sigma_{g,n},s), \Mod_h) / \Mod_h.
\end{equation}
Thus to a holomorphic map $F:B\rightarrow\mathcal{M}_h$ one associates the class
$\MO(F)\coloneqq [F_*\circ (f_B)_*] \in M_{g,n,h}$.
Theorem \ref{theorem::Ct_belongs_to_compactK} implies that, given $\epsilon > 0$, the subset
\begin{equation*}
\left\{
\MO(F)
\middle|
\begin{aligned}
&B
\text{ is an oriented hyperbolic surface of type $(g,n)$ such that $\sys(B)\ge \epsilon$}
\\
&
F:B\rightarrow \mathcal{M}_h
\text{ is a non-constant holomorphic map}
\end{aligned}
\right\}
\subset M_{g,n,h}
\end{equation*}
is finite, where the finiteness
depends only on $g$, $n$, $h$ and $\epsilon$ (cf. \cite{Caporaso}).
\end{remark}

\subsection{From cusp region to end of moduli space}
\label{subsection::cusp}

The moduli space $\mathcal{M}_h$ has only one end, meaning that for any compact set, there is
exactly one unbounded component of the complement.
In this subsection, we consider a hyperbolic cusp region $U$, i.e. the neighbourhood of a cusp bounded by a horocycle of length $2$.
Then we investigate a non-constant map $F:U\rightarrow \mathcal{M}_h$ that is distance-decreasing for $1/2$ of the hyperbolic distance $d_{U}$ on $U$ and the Teichm\"uller distance $d_{\mathcal{M}}$ on $\mathcal{M}_h$.

The Dehn twist along a closed curve $\alpha\subset\Sigma_h$, denoted by $\tau_{\alpha}$, is a diffeomorphism of $\Sigma_h$ and represents a mapping class $T_{\alpha}\in\Mod_h$.
Let $\boldsymbol{\alpha}=\{\alpha_1,\ldots,\alpha_m\}$ be a multi-curve, then a multi-twist along $\boldsymbol{\alpha}$ is a product of the form
$T=T_{\alpha_1}^{r_1}\circ\cdots\circ T_{\alpha_m}^{r_m}\in\Mod_h$
with each $r_i\in \Z \setminus\{0\}$.
In particular, a power of positive or negative Dehn twist is a multi-twist in our discussion.

\begin{lemma}
\label{lemma::two points partitioned by two geodesics}
Let $\widetilde{\gamma_1},\widetilde{\gamma_2}\subset\Hyperbolic^2$ be disjoint geodesics
and $k\ge 3$ be an integer.
Suppose that $\phi$ is a hyperbolic isometry along $\widetilde{\gamma_1}$ whose translation length is equal to $l$
such that $\widetilde{\gamma_2}\cap \phi(\widetilde{\gamma_2})=\emptyset$.
Let $\widetilde{p_1},\widetilde{p_2}\in\Hyperbolic^2$ be arbitrary points separated by both $\widetilde{\gamma_2}$ and $\phi^k(\widetilde{\gamma_2})$.
Then $d_{\Hyperbolic^2}(\widetilde{p_1},\widetilde{p_2})\ge l$.
\end{lemma}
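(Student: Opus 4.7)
My plan is to reduce the claim to a separation estimate between the two geodesics $\widetilde{\gamma_2}$ and $\phi^k(\widetilde{\gamma_2})$, and then to control that separation by projecting orthogonally onto the axis $\widetilde{\gamma_1}$.

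First, since both $\widetilde{\gamma_2}$ and $\phi^k(\widetilde{\gamma_2})$ partition $\widetilde{p_1}$ from $\widetilde{p_2}$, the unique geodesic segment joining these points must cross each of them, so
\[
d_{\Hyperbolic^2}(\widetilde{p_1},\widetilde{p_2}) \;\geq\; d_{\Hyperbolic^2}\bigl(\widetilde{\gamma_2},\phi^k(\widetilde{\gamma_2})\bigr).
\]
It therefore suffices to bound the right-hand side below by $l$.

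The key tool is the nearest-point projection $P\colon \Hyperbolic^2\to\widetilde{\gamma_1}$. It is $1$-Lipschitz, and because $\phi$ is an isometry preserving $\widetilde{\gamma_1}$ setwise, $P$ intertwines $\phi$ with its restriction to $\widetilde{\gamma_1}$; in particular $P(\phi^k(\widetilde{\gamma_2}))=\phi^k(P(\widetilde{\gamma_2}))$. Hence
\[
d_{\Hyperbolic^2}\bigl(\widetilde{\gamma_2},\phi^k(\widetilde{\gamma_2})\bigr) \;\geq\; d_{\Hyperbolic^2}\bigl(P(\widetilde{\gamma_2}),\phi^k(P(\widetilde{\gamma_2}))\bigr),
\]
and the right-hand side is computed inside $\widetilde{\gamma_1}$, on which $\phi$ acts as translation by $l$.

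To evaluate it, I normalize via an isometry to the upper half-plane so that $\widetilde{\gamma_1}$ is the imaginary axis and $\phi(z)=e^{l}z$. In the ultraparallel configuration, $\widetilde{\gamma_2}$ is a Euclidean semicircle with endpoints $0<a<b$ on the positive real axis; then $P(\widetilde{\gamma_2})=\{\,iy: y\in[a,b]\,\}$ is a segment of hyperbolic length $L_0=\log(b/a)$. The disjointness $\widetilde{\gamma_2}\cap\phi(\widetilde{\gamma_2})=\emptyset$ forces the intervals $[a,b]$ and $[e^{l}a,e^{l}b]$ to be non-interleaved, whence $e^{l}a\geq b$ and therefore $L_0\leq l$. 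The image $\phi^k(P(\widetilde{\gamma_2}))$ is the segment with endpoints $ie^{kl}a, ie^{kl}b$, so the two segments are at hyperbolic distance $kl-L_0\geq (k-1)l\geq 2l$ whenever $k\geq 3$. Concatenating the inequalities yields $d_{\Hyperbolic^2}(\widetilde{p_1},\widetilde{p_2})\geq 2l\geq l$.

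The main obstacle is the configuration in which $\widetilde{\gamma_2}$ is asymptotic to $\widetilde{\gamma_1}$, i.e.\ shares one ideal endpoint with it. In the normalized model $\widetilde{\gamma_2}$ then becomes a vertical line, its projection onto $\widetilde{\gamma_1}$ is an unbounded ray, and the naive estimate $d(P(\widetilde{\gamma_2}),\phi^k(P(\widetilde{\gamma_2})))$ collapses to $0$. I would expect either the ambient convention of ``disjoint'' in force here to exclude sharing an ideal endpoint, or the intended applications (where $\phi$ arises as a multi-twist along a closed geodesic and $\widetilde{\gamma_2}$ as the lift of a transverse geodesic) to rule out this case, so that the argument above handles the situation that genuinely appears and the factor $k\geq 3$ provides the slack needed.
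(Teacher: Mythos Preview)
Your argument is correct in the ultraparallel case and in fact yields the stronger bound $d_{\Hyperbolic^2}(\widetilde{p_1},\widetilde{p_2})\geq (k-1)l$. The paper takes a different but related route: it drops the common perpendicular segments $\beta_1,\beta_2$ from $\widetilde{\gamma_1}$ to $\phi(\widetilde{\gamma_2})$ and $\phi^2(\widetilde{\gamma_2})$, argues via a decomposition of $\Hyperbolic^2$ into regions that the geodesic $[\widetilde{p_1},\widetilde{p_2}]$ must cross both $\beta_1$ and $\beta_2$, and then uses that $\beta_2=\phi(\beta_1)$ with both perpendicular to $\widetilde{\gamma_1}$, so $d(\beta_1,\beta_2)=l$. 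Your projection argument is more direct: it avoids the topological crossing argument entirely and exploits the full translation length $kl$ rather than only one period. What the paper's version buys is that it never needs to compute $P(\widetilde{\gamma_2})$ explicitly.

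On the asymptotic edge case: your diagnosis is accurate, and the paper's proof has exactly the same implicit restriction, since a common perpendicular between $\widetilde{\gamma_1}$ and $\phi^i(\widetilde{\gamma_2})$ exists only when they are ultraparallel. In the sole application (Lemma~\ref{lemma::multi-twist_sum>=l_X(alpha_i)}), both $\widetilde{\gamma_1}$ and $\widetilde{\gamma_2}$ are lifts of closed geodesics on the closed hyperbolic surface $X$ of genus $h\ge 2$; the deck group has no parabolics, so distinct axes never share an ideal endpoint and the asymptotic configuration cannot occur. So your caveat is the right one, and no further work is needed.
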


\begin{proof}
Take the (unique) geodesic segment $\beta_i$ perpendicular to both $\widetilde{\gamma_1}$ and $\phi^i(\widetilde{\gamma_2})$, for $i=1,2$.
The hyperbolic plane is separated by 
$\widetilde{\gamma_1}$, $\widetilde{\gamma_2}$, $\phi(\widetilde{\gamma_2})$, $\phi^2(\widetilde{\gamma_2})$, $\phi^k(\widetilde{\gamma_2})$
and $\beta_1$, $\beta_2$ into $8$ pieces.
Therefore, the geodesic segment joining $\widetilde{p_1}$ to $\widetilde{p_2}$ goes cross $\beta_1$ and $\beta_2$.
Hence the distance between $\widetilde{p_1}$ and $\widetilde{p_2}$ is at least the distance between $\beta_1$ and $\beta_2$, which is equal to $l$.
\end{proof}

\begin{lemma}[Lemma 4.2 in \cite{Ivanov1992}, cf. Expos\'e 6, Section \URNum{7} in \cite{FLP}]
\label{lemma::ivanov}
Let
$\tau=\tau_{\alpha_1}^{r_1}\circ\cdots\circ \tau_{\alpha_m}^{r_m}$ be a multi-twist diffeomorphism along $\boldsymbol{\alpha}=\{\alpha_1,\ldots,\alpha_m\}$.
Then for all closed curves $\gamma_1,\gamma_2\subset \Sigma_h$, we have
\begin{equation*}
\Int(\tau(\gamma_1),\gamma_2)\ge 
\sum_{i=1}^m (|r_i|-2) \Int(\gamma_1,\alpha_i) \Int(\gamma_2,\alpha_i) - \Int(\gamma_1,\gamma_2).
\end{equation*}
\end{lemma}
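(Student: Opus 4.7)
The plan is to lift the problem to the universal cover $\Hyperbolic^2 \to \Sigma_h$ and use Lemma \ref{lemma::two points partitioned by two geodesics} as the main geometric ingredient, converting each twist $\tau_{\alpha_i}^{r_i}$ into a lower bound for the number of forced intersections. First I would fix a hyperbolic structure on $\Sigma_h$, put $\gamma_1$, $\gamma_2$ and the multi-curve $\boldsymbol{\alpha}$ in pairwise minimal position, and realize each $\tau_{\alpha_i}$ by a diffeomorphism supported in a small annular neighbourhood $A_i$ of $\alpha_i$. Since the $\alpha_i$ are disjoint, the $A_i$ can be taken disjoint and $\tau$ factors as the product of the individual twists, each supported on its own annulus.

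Next I would lift the picture. Each $\alpha_i$ has countably many geodesic lifts; on the annular lift around any such $\widetilde{\alpha_i}$, the diffeomorphism realizing $\tau_{\alpha_i}^{r_i}$ is isotopic, relative to the boundary of the annulus, to the hyperbolic isometry $\phi_i$ of translation length $|r_i|\,l(\alpha_i)$ along $\widetilde{\alpha_i}$, and is the identity outside the annulus. For each pair of intersection points $p\in\gamma_1\cap\alpha_i$ and $q\in\gamma_2\cap\alpha_i$, I would choose lifts $\widetilde{\gamma_1}$, $\widetilde{\gamma_2}$ and a common $\widetilde{\alpha_i}$ so that both lifted arcs cross $\widetilde{\alpha_i}$ over $p$ and $q$ respectively. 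The subarc of the lift of $\tau(\gamma_1)$ inside the annulus then exits on the far side of $\widetilde{\alpha_i}$ translated by $\phi_i^{r_i}$ compared with its entry point.

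The core count proceeds as follows. Once the endpoints of this twisted subarc are translated by $\phi_i^{r_i}$ along $\widetilde{\alpha_i}$, each of the $|r_i|$ translates $\phi_i^{k}(\widetilde{\gamma_2})$, for $k$ ranging over an appropriate window, separates those two endpoints. Applying Lemma \ref{lemma::two points partitioned by two geodesics} with $\widetilde{\alpha_i}$ in the role of $\widetilde{\gamma_1}$ there, each such translate is forced to meet the twisted subarc at a distinct point, contributing a distinct intersection of $\tau(\gamma_1)$ with $\gamma_2$ downstairs on $\Sigma_h$. Losing up to two of these translates to boundary effects at the two sides of $A_i$ gives a lower bound of $|r_i|-2$ intersections per pair $(p,q)$, and summing over $i$ and over all such pairs yields the main term $\sum_{i}(|r_i|-2)\,\Int(\gamma_1,\alpha_i)\,\Int(\gamma_2,\alpha_i)$ of forced intersections inside $\bigsqcup_i A_i$.

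The correction $-\Int(\gamma_1,\gamma_2)$ then accounts for intersections of $\gamma_1$ and $\gamma_2$ lying outside the annuli, which survive under $\tau$ but may in principle be cancelled against some of the forced intersections when one reduces $\tau(\gamma_1)\cup\gamma_2$ to minimal position on $\Sigma_h$. The main obstacle will be exactly this bookkeeping: verifying that intersections produced by different pairs $(p,q)$ or different components $\alpha_i$ are genuinely distinct in $\Sigma_h$, and that the loss of $2$ per component absorbs all overlaps between the twisted image and $\partial A_i$. Once this combinatorial accounting is under control, Lemma \ref{lemma::two points partitioned by two geodesics} supplies the geometric heavy lifting and the stated inequality follows.
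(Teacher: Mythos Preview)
The paper does not prove this lemma; it is quoted verbatim from Ivanov \cite{Ivanov1992} (with a cross-reference to FLP) and used as a black box. So there is no in-paper proof to compare against.

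That said, your outline is essentially the standard argument, but your invocation of Lemma~\ref{lemma::two points partitioned by two geodesics} is misplaced. That lemma is a \emph{distance} estimate: it says two points separated by both $\widetilde{\gamma_2}$ and $\phi^k(\widetilde{\gamma_2})$ are at least $l$ apart in $\Hyperbolic^2$. It says nothing about intersection numbers, and its conclusion cannot be read as ``each translate is forced to meet the twisted subarc.'' What you actually need at that step is the elementary topological fact that any arc in $\Hyperbolic^2$ joining two points on opposite sides of a geodesic must cross that geodesic --- this is Jordan separation (or, downstairs, the bigon criterion), not Lemma~\ref{lemma::two points partitioned by two geodesics}. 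In the paper that lemma serves a genuinely different purpose, namely bounding geodesic lengths in the proof of Lemma~\ref{lemma::multi-twist_sum>=l_X(alpha_i)}, and is irrelevant to counting intersections.

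With that correction --- replacing the appeal to Lemma~\ref{lemma::two points partitioned by two geodesics} by the plain separation argument --- your sketch matches the classical proof in Ivanov/FLP, including the bookkeeping concern you flag at the end, which is handled there via the bigon criterion for minimal position.
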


In particular, for any multi-twist diffeomorphism $\tau$ along $\boldsymbol{\alpha}$ and closed curve $\gamma$ intersecting $\boldsymbol{\alpha}$ at least once, we have $\Int(\tau^3(\gamma),\gamma)\ge
\sum_{i=1}^m (|3r_i|-2) \Int(\gamma,\alpha_i)^2 \ge 1$.
The following lemma is inspired by this observation.

\begin{lemma}
\label{lemma::multi-twist_sum>=l_X(alpha_i)}
Let $T=T_{\alpha_1}^{r_1}\circ\cdots\circ T_{\alpha_m}^{r_m}\in\Mod_h$ be a multi-twist
along $\boldsymbol{\alpha}=\{\alpha_1,\ldots,\alpha_m\}$.
Let $\gamma\subset\Sigma_h$ be a simple closed curve such that
$\Int(\boldsymbol{\alpha},\gamma)\coloneqq \sum_i |r_i| \Int(\alpha_i,\gamma)\ge 1$.
Then, for any $[(X,f_X)]\in\Teich_h$, we have
\begin{equation*}
L_{\gamma}([(X,f_X)])+
L_{\gamma}(T^4\cdot [(X,f_X)])\ge
\frac{1}{3} L_{\alpha_i}([(X,f_X)])
\end{equation*}
for every $i\in \{1,\ldots,m\}$ satisfying $\Int(\alpha_i,\gamma)>0$.
\end{lemma}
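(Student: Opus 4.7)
By definition of the $\Mod_h$-action on $\Teich_h$, $L_\gamma(T^4\cdot[(X,f_X)]) = L_{\tau^{-4}\gamma}([(X,f_X)])$ where $\tau^{-4}=\tau_{\alpha_1}^{-4r_1}\circ\cdots\circ\tau_{\alpha_m}^{-4r_m}$ (the $\alpha_j$ being pairwise disjoint, these Dehn twists commute). The inequality to prove is thus
\begin{equation*}
L_\gamma(X) + L_{\tau^{-4}\gamma}(X) \;\ge\; \tfrac{1}{3}\, L_{\alpha_i}(X),
\end{equation*}
where $L_\gamma(X)$ abbreviates $L_\gamma([(X,f_X)])$. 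I focus on the main case $\Int(\alpha_i,\gamma)\ge 1$, in which the twist around $\alpha_i$ directly contributes to $L_{\tau^{-4}\gamma}$; such an index $i$ exists by the hypothesis $\Int(\boldsymbol{\alpha},\gamma)\ge 1$.

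Writing $X=\Gamma\backslash\Hyperbolic^2$, let $\widetilde{\alpha_i}\subset\Hyperbolic^2$ be a lift of the geodesic representative $\alpha_i^*$, with associated covering translation $\phi_i\in\Gamma$ of translation length $L_{\alpha_i}(X)$. Choose a lift $\widetilde{\gamma^*}$ of the geodesic representative $\gamma^*$ that crosses $\widetilde{\alpha_i}$, and a lift $\widetilde{(\tau^{-4}\gamma)^*}$ of the geodesic representative of $\tau^{-4}\gamma$ chosen coherently via the covering action. The geometric effect of the factor $\tau_{\alpha_i}^{-4r_i}$ in $\tau^{-4}$ is to insert a winding $\phi_i^{\pm 4r_i}$ around $\widetilde{\alpha_i}$ at each crossing with $\gamma$: at the level of $\pi_1(X,p_0)$ based at a crossing $p_0\in\gamma^*\cap\alpha_i^*$, the class of $\tau^{-4}\gamma$ contains a factor $[\alpha_i]^{-4r_i}$. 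Consequently, the concatenation of suitable subarcs of $\widetilde{\gamma^*}$ and $\widetilde{(\tau^{-4}\gamma)^*}$ produces a piecewise geodesic path in $\Hyperbolic^2$ of total length at most $L_\gamma(X) + L_{\tau^{-4}\gamma}(X)$, connecting two points $\widetilde{p_1},\widetilde{p_2}$ which are partitioned by an auxiliary geodesic $\widetilde{\gamma_2}$ (disjoint from $\widetilde{\alpha_i}$) and its translate $\phi_i^{4|r_i|}(\widetilde{\gamma_2})$.

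Since $4|r_i|\ge 4\ge 3$, applying Lemma~\ref{lemma::two points partitioned by two geodesics} with $\widetilde{\gamma_1}=\widetilde{\alpha_i}$, $\phi=\phi_i$, and $k=4|r_i|$ yields $d_{\Hyperbolic^2}(\widetilde{p_1},\widetilde{p_2}) \ge L_{\alpha_i}(X)$, whence $L_\gamma(X)+L_{\tau^{-4}\gamma}(X) \ge L_{\alpha_i}(X) \ge \tfrac{1}{3} L_{\alpha_i}(X)$. The weaker constant $1/3$ in the statement leaves slack to accommodate the choice of subarcs and the general configuration where $\gamma$ crosses $\alpha_i$ multiple times, or where several factors $\tau_{\alpha_j}^{-4r_j}$ interact nontrivially. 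The main technical obstacle I anticipate is the construction of the auxiliary geodesic $\widetilde{\gamma_2}$: since $\widetilde{\gamma^*}$ itself crosses $\widetilde{\alpha_i}$, one cannot take $\widetilde{\gamma_2}=\widetilde{\gamma^*}$ directly, and must instead pick a translate or a different lift of $\gamma^*$ on one side of $\widetilde{\alpha_i}$, sufficiently far to guarantee $\widetilde{\gamma_2}\cap\phi_i(\widetilde{\gamma_2})=\emptyset$, all while verifying that the partition of $\widetilde{p_1},\widetilde{p_2}$ by $\widetilde{\gamma_2}$ and $\phi_i^{4|r_i|}(\widetilde{\gamma_2})$ is indeed the intended one.
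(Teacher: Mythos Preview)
Your outline is on the right track and matches the paper's strategy in spirit: reduce to lengths of $\gamma$ and $\tau^{-4}\gamma$ on $X$, lift to $\Hyperbolic^2$, and invoke Lemma~\ref{lemma::two points partitioned by two geodesics}. But two concrete pieces are missing, and your stronger claim $L_\gamma+L_{\tau^{-4}\gamma}\ge L_{\alpha_i}$ is not justified.

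First, the separating geodesic. You propose taking $\widetilde{\gamma_2}$ to be some translate of a lift of $\gamma^*$, and you correctly flag the difficulty that such a lift crosses $\widetilde{\alpha_i}$. The paper sidesteps this entirely: it lifts $\gamma^*$ to the horizontal geodesic $\widetilde{\gamma_1}$, then walks along $\widetilde{\gamma_1}$ recording the successive lifts of curves in $\boldsymbol{\alpha}$ it crosses. The \emph{second} such lift on each side, call them $\widetilde{\alpha_2}$ and $\widetilde{\alpha_{-2}}$, are the separating geodesics fed into Lemma~\ref{lemma::two points partitioned by two geodesics}. These are automatically disjoint from $\widetilde{\alpha_1}$ (the first lift, a copy of $\alpha_i$), and their $\phi_1$-translates are disjoint from one another, so the hypotheses of the lemma are met without further work.

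Second, the factor $1/3$ is not slack but essential to the argument. The paper takes the intersection point $\widetilde{p}=\widetilde{\gamma_1}\cap\widetilde{\gamma_2}$ (where $\widetilde{\gamma_2}$ is now a lift of $(\tau^{-4}\gamma)^*$) and considers $\psi_1^3(\widetilde{p})$ and $\psi_2^3(\widetilde{p})$, the images under \emph{three} periods of translation along each lifted geodesic. Only after three periods can one guarantee that these two points are separated by both $\widetilde{\alpha_2}$ and $\phi_1^{4r_u}(\widetilde{\alpha_2})$; a single period does not suffice, because $\widetilde{p}$ sits between $\widetilde{\alpha_1}$ and $\widetilde{\alpha_{-1}}$, and one must travel past further $\alpha$-lifts to clear the separating geodesics. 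The triangle inequality then gives
\[
3\bigl(L_\gamma(X)+L_{\tau^{-4}\gamma}(X)\bigr)=d_{\Hyperbolic^2}(\widetilde{p},\psi_1^3\widetilde{p})+d_{\Hyperbolic^2}(\widetilde{p},\psi_2^3\widetilde{p})\ge d_{\Hyperbolic^2}(\psi_1^3\widetilde{p},\psi_2^3\widetilde{p})\ge L_{\alpha_i}(X).
\]
Finally, the paper splits into two cases according to whether the twists at the $\alpha$-curves adjacent to $\widetilde{p}$ on either side have the same or opposite signs; in the opposite-sign case the natural lift of $(\tau^{-4}\gamma)^*$ misses $\widetilde{\gamma_1}$ and must be replaced by a $\phi_1^{-1}$-translate. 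Your sketch does not address this dichotomy.
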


\begin{proof}
Without loss of generality, we assume that $(X,f_X)$ is a hyperbolic representative of the given $[(X,f_X)]$.
Suppose that $T$ is represented by a multi-twist diffeomorphism $\tau$.
We have
\begin{equation*}
L_{\gamma}(T^4\cdot [(X,f_X)])=L_{\tau^{-4}(\gamma)}([(X,f_X)])
\end{equation*}
since $T\cdot [(X,f_X)]=[(X,f_X\circ \tau^{-1})]$.

Consider a universal covering $\iota:\Hyperbolic^2\rightarrow X$
such that the horizontal geodesic $\widetilde{\gamma_1}\subset\Hyperbolic^2$ is a lift of the (unique) geodesic homotopic to $f_X(\gamma)\subset X$ (see Figure \ref{figure::The lifts of gamma and T^4(gamma)}).
We suppose that $\widetilde{\gamma_1}$ is oriented toward the left.
Proceeding from $0\in\Hyperbolic^2$, the first lift of some $f_X(\alpha_u)$ that intersects $\widetilde{\gamma_1}$ is denoted by $\widetilde{\alpha_1}$ and
the second lift of some $f_X(\alpha_v)$ is denoted by $\widetilde{\alpha_2}$.
Going along the opposite direction, the first lift of some $f_X(\alpha_x)$ is denoted by $\widetilde{\alpha_{-1}}$ and the second lift of some $f_X(\alpha_y)$ is denoted by $\widetilde{\alpha_{-2}}$. 
Suppose that $\widetilde{\alpha_1}$ and $\widetilde{\alpha_{-1}}$ are oriented upward.

Fix $i=1,\ldots,m$.
Without loss of generality, we further assume that $u=i$.

The key point of the following argument is that while the exact position of $f_X(\tau^{-4}(\gamma)) \subset X$ is difficult to describe,
we can determine the endpoint on $\partial \Hyperbolic^2$ of any lift of $f_X(\tau^{-4}(\gamma))$ to $\Hyperbolic^2$.
More precisely, we start by fixing a complementary component of all lifts of $f_X(a_1),\ldots,f_X(a_m)$ in $\Hyperbolic^2$.
We then shear half of the plane along a boundary line of this component,
glue the two components along this boundary
and repeat this process iteratively.
Each lift of $\gamma$ yields a union of arcs that, when joined along the lifts of $f_X(\alpha_i)$, shares the same endpoints as the corresponding lift of $f_X(\tau^{-4}(\gamma))$.

Fix the complementary component bounded by $\widetilde{\alpha_1}$ and $\widetilde{\alpha_{-1}}$.
The shearing along $\widetilde{\alpha_1}$
that interprets $T_{\alpha_u}$ is a hyperbolic isometry $\phi_1\in\Isom(\Hyperbolic^2)$ acting on the left side of $\widetilde{\alpha_1}$,
whose translation length is $L_{\alpha_u}([(X,f_X)])$.
Therefore, the twist $T_{\alpha_u}^{r_u}$ is interpreted by $\phi_1^{r_u}$ acting on the left side of $\widetilde{\alpha_1}$.
Similarly,
the closed geodesic homotopic to $f_X(\alpha_x)\subset X$ is interpreted by a hyperbolic isometry $\phi_{-1}\in\Isom(\Hyperbolic^2)$
of which $\widetilde{\alpha_{-1}}$ is the axis, whose translation length is $L_{\alpha_x}([(X,f_X)])$.
Therefore, the twist $T_{\alpha_x}^{r_x}$ is interpreted by $\phi_{-1}^{-r_x}$ acting on the right side of $\widetilde{\alpha_{-1}}$.

\begin{figure}[H]
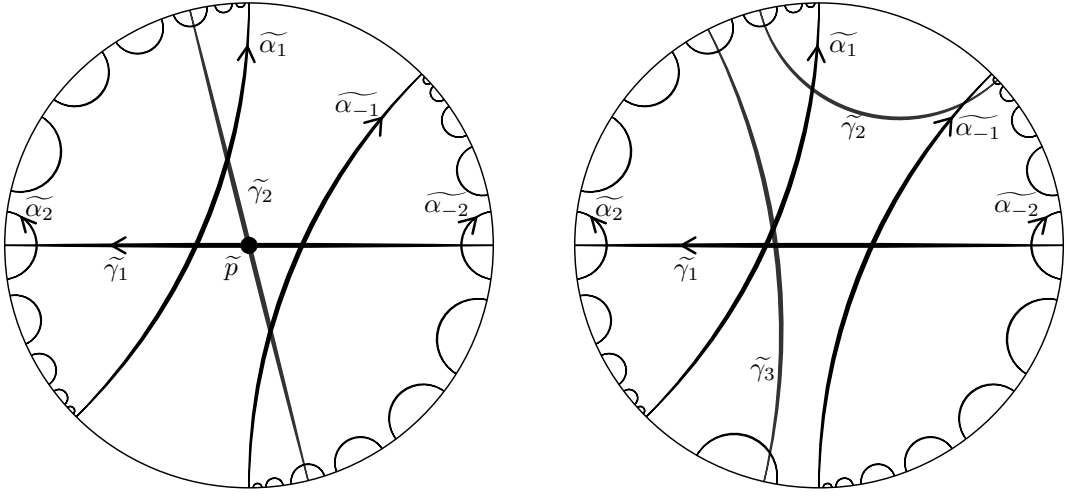

\centering
\begin{minipage}{.5\textwidth}
  \centering
  \def\svgwidth{.9\textwidth}
  \input{pdf_tex/multi-twist-lifting---left.pdf_tex}
\end{minipage}%
\begin{minipage}{.5\textwidth}
  \centering
  \def\svgwidth{.9\textwidth}
  \input{pdf_tex/multi-twist-lifting---right.pdf_tex}
\end{minipage}
\caption{Lifts of $\gamma$ and $T^4(\gamma)$ given a multi-twist $T$ along $\boldsymbol{\alpha}$.}
\label{figure::The lifts of gamma and T^4(gamma)}
\end{figure}

Suppose that the twists within $T$ along $\alpha_u$ and $\alpha_x$ have the same direction.
Without lose of generality, we assume that $r_u>0$ and $r_x>0$.
There exists a lift $\widetilde{\gamma_2}$ of the (unique) geodesic homotopic to $f_X(\tau^{-4}(\gamma))\subset X$ which connects two boundary points separated by both $\phi_1^{4r_u}(\widetilde{\alpha_2})$
and $\phi_{-1}^{-4r_x}(\widetilde{\alpha_{-2}})$.
Let $\widetilde{p}\in\Hyperbolic^2$ be the intersection $\widetilde{\gamma_1}\cap\widetilde{\gamma_2}$.
Let $\psi_1,\psi_2$ be hyperbolic isometries along $\widetilde{\gamma_1}$, $\widetilde{\gamma_2}$ corresponding to $f_X(\gamma)$, $f_X(\tau^{-4}(\gamma))$ respectively.
Therefore,
\begin{align*}
3\Big(
L_{\gamma}([(X,f_X)])+
L_{\gamma}(T^4\cdot [(X,f_X)])
\Big)
&=
3\Big(
L_{\gamma}([(X,f_X)])+
L_{\tau^{-4}(\gamma)}([(X,f_X)])
\Big)
\\
&=
d_{\Hyperbolic^2}(\widetilde{p},\psi_1^3(\widetilde{p}))
+
d_{\Hyperbolic^2}(\widetilde{p},\psi_2^3(\widetilde{p})).
\end{align*}
Meanwhile,
$\psi_1^3(\widetilde{p})$ and $\psi_2^3(\widetilde{p})$
are separated by both $\widetilde{\alpha_2}$ and
$\phi_1^{4r_u}(\widetilde{\alpha_2})$.
The desired inequality follows from Lemma \ref{lemma::two points partitioned by two geodesics}.

Suppose that twists within $T$ along $\alpha_u$ and $\alpha_x$ have different directions.
Without loss of generality, we assume that $r_u>0$ and $r_x<0$.
In this case, we have $\Int(\boldsymbol{\alpha},\gamma)\ge 2$.
Take the lift $\widetilde{\gamma_2}$ of the geodesic homotopic to $f_X(\tau^{-4}(\gamma))\subset X$ which connects two boundary points separated by both
$\phi_1^{4r_u}(\widetilde{\alpha_2})$
and
$\phi_{-1}^{-4r_x}(\widetilde{\alpha_{-2}})$.
Note that $\widetilde{\gamma_2}$ does not intersect $\widetilde{\gamma_1}$.
The geodesic $\widetilde{\gamma_3}=\phi_1^{-1}(\widetilde{\gamma_2})$ is another lift of the geodesic homotopic to $f_X(\tau^{-4}(\gamma))$,
which connects two boundary points that are separated by $\phi_1^{4r_u-1}(\widetilde{\alpha_2})$ and $\phi_1^{-1}(\widetilde{\alpha_{-1}})$.
Hence $\widetilde{\gamma_3}$ intersects $\widetilde{\gamma_1}$,
where the intersection is denoted by $\widetilde{p}$.
Let $\psi_1$, $\psi_3$ be hyperbolic isometries along $\widetilde{\gamma_1}$, $\widetilde{\gamma_3}$
corresponding to $f_X(\gamma)$, $f_X(\tau^{-4}(\gamma))$ respectively.
Therefore, $\psi_1^3(\widetilde{p})$ and $\psi_3^3(\widetilde{p})$ are separated by both $\widetilde{\alpha_2}$ and $\phi_1^{4r_u-1}(\widetilde{\alpha_2})$.
Again, the desired inequality follows from Lemma \ref{lemma::two points partitioned by two geodesics}.
\end{proof}

The multi-twist of a hyperbolic surface that provides a very slight deformation of the hyperbolic structure should be along a set of very short closed geodesics.
We formulate this property in Proposition \ref{proposition::bounded_lmax} and Proposition \ref{proposition::l<=K*epsilon}, which should be well-known.

\begin{proposition}
\label{proposition::bounded_lmax}
Given $\mu\in\Z_{>0}$, there exists a constant $l_{\text{max}}=l_{\text{max}}(h,\mu)$ that depends only on $h$ and $\mu$ such that,
for any multi-twist $T=T_{\alpha_1}^{r_1}\circ\cdots\circ T_{\alpha_m}^{r_m}\in \Mod_h$ along
$\boldsymbol{\alpha}=\{\alpha_1,\ldots,\alpha_m\}$
and $[(X,f_X)]\in\Teich_h$,
if
\begin{equation*}
d_{\Teich}([(X,f_X)],T\cdot [(X,f_X)])\le 2\mu
\end{equation*}
then for all $i=1,\ldots,m$, we get
\begin{equation*}
L_{\alpha_i}([(X,f_X)]) \le l_{\text{max}}.
\end{equation*}
\end{proposition}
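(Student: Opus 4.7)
The strategy is to feed a single short test curve $\gamma$ into Lemma \ref{lemma::multi-twist_sum>=l_X(alpha_i)}, which already bounds every $L_{\alpha_i}([(X,f_X)])$ as soon as $L_\gamma([(X,f_X)])$ and $L_\gamma(T^4\cdot[(X,f_X)])$ are controlled and $\Int(\boldsymbol{\alpha},\gamma)\ge 1$. The control of the second length is immediate from Wolpert's lemma applied to the hypothesis: since $T$ acts by isometries on $(\Teich_h,d_{\Teich})$, iteration of the triangle inequality gives $d_{\Teich}([(X,f_X)],T^4\cdot[(X,f_X)])\le 8\mu$, hence $L_\gamma(T^4\cdot[(X,f_X)])\le e^{16\mu}L_\gamma([(X,f_X)])$.

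\textbf{Finding a short $\gamma$.} The crux is to produce $\gamma$ with $L_\gamma([(X,f_X)])$ bounded purely in terms of $h$ and intersecting $\boldsymbol{\alpha}$. For this I would invoke Bers' theorem on short pants decompositions: there exist a constant $B_h$ depending only on $h$ and simple closed geodesics $\beta_1,\ldots,\beta_{3h-3}$ forming a pants decomposition of the hyperbolic surface $X$ with $l_X(\beta_j)\le B_h$ for every $j$. I then split into two cases. If some $\beta_j$ has positive geometric intersection number with some $f_X(\alpha_k)$, take $\gamma=f_X^{-1}(\beta_j)$: this simple closed curve on $\Sigma_h$ satisfies $L_\gamma([(X,f_X)])\le B_h$ and $\Int(\boldsymbol{\alpha},\gamma)\ge |r_k|\cdot \Int(\alpha_k,\gamma)\ge 1$. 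Otherwise $\boldsymbol{\alpha}$ is disjoint from the whole pants decomposition, so each $\alpha_i$ lies inside a single pair of pants; since a pair of pants admits no essential non-peripheral simple closed curve, each $\alpha_i$ is isotopic to a boundary component and hence to some $\beta_j$, yielding directly $L_{\alpha_i}([(X,f_X)])\le B_h$.

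\textbf{Conclusion and main obstacle.} In the first case, Lemma \ref{lemma::multi-twist_sum>=l_X(alpha_i)} combined with the Wolpert bound gives
\begin{equation*}
\tfrac{1}{3}L_{\alpha_i}([(X,f_X)])\;\le\; L_\gamma([(X,f_X)])+L_\gamma(T^4\cdot[(X,f_X)])\;\le\;(1+e^{16\mu})B_h
\end{equation*}
for every $i=1,\ldots,m$, so setting $l_{\text{max}}(h,\mu)=3(1+e^{16\mu})B_h$ (which also dominates the bound $B_h$ appearing in the second case) completes the argument. The non-trivial external input is Bers' pants decomposition theorem; everything else is a clean combination of Lemma \ref{lemma::multi-twist_sum>=l_X(alpha_i)}, Wolpert's length distortion inequality, and the classification of essential simple closed curves on a pair of pants. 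The only subtle point is making sure that the dichotomy on intersections with the pants decomposition is exhaustive, which rests precisely on that classification.
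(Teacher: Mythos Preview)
Your proposal is correct and follows essentially the same approach as the paper: a Bers short pants decomposition, the dichotomy ``$\alpha_i$ is isotopic to a pants curve'' versus ``some pants curve intersects $\boldsymbol{\alpha}$'', Wolpert's lemma to control $L_\gamma(T^4\cdot[(X,f_X)])$ via $d_{\Teich}([(X,f_X)],T^4\cdot[(X,f_X)])\le 8\mu$, and Lemma~\ref{lemma::multi-twist_sum>=l_X(alpha_i)} to conclude. The only cosmetic difference is that the paper organises the dichotomy per curve $\alpha_i$ rather than globally, and writes the Bers constant explicitly as $21(h-1)$.
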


\begin{remark}
Technically, there is no need to restrict $\mu$ to being an integer in Proposition \ref{proposition::bounded_lmax}, as well as in Proposition \ref{proposition::l<=K*epsilon}.
However, if $\phi$ is an infinite order peripheral monodromy of a holomorphic map $F:B\rightarrow \mathcal{M}_h$,
then some power $\phi^\mu \eqqcolon T$ being a multi-twist, with $\mu \in \Z_{>0}$.
Consequently, for any marking $[(X,f_X)]$ of the image $F(b)$, where $b\in U \subset B$ lies in the cusp region, the required inequality holds because $l_B(\partial U) = 2$. 
\end{remark}

\begin{proof}
Without loss of generality, we assume that $(X,f_X)$ is a hyperbolic representative of the given $[(X,f_X)]$.
There exists a geodesic pants decomposition $\mathcal{P}_X=\{\gamma_i\}$ of $X$ with each $l_X(\gamma_i)$ bounded above by Bers' constant (cf. Theoem 12.8 in \cite{FarbMargalit2011}).
More precisely, $l_X(\gamma_i)\le 21(h-1)$ for each $\gamma_i\in\mathcal{P}_X$.
Suppose that $\alpha_{X,i}\subset X$ is the (unique) geodesic homotopic to $f_X(\alpha_i)$, for each $\alpha_i\in\boldsymbol{\alpha}$,
and
set $\boldsymbol{\alpha}_X=\{\alpha_{X,i}\mid i=1,\ldots,m\}$.
There are two cases to consider for all $i=1,\ldots,m$.
\\
\textbf{Case 1: $\alpha_{X,i}\in \mathcal{P}_X$.}
Then, $L_{\alpha_i}([(X,f_X)])=l_X(\alpha_{X,i}) \le 21(h-1)$.
\\
\textbf{Case 2: $\alpha_{X,i}\not\in \mathcal{P}_X$.}
Then, there exists a simple closed curve $\gamma\subset \Sigma_h$ such that
$\Int(\boldsymbol{\alpha},\gamma)\ge 1$ and
$L_{\gamma}([(X,f_X)]) \le 21(h-1)$.
By Wolpert's Lemma \ref{lemma::wolpert}, 
\begin{equation*}
L_{\gamma}(T^4\cdot [(X,f_X)])
\le
\exp{\{2\cdot d_{\Teich}([(X,f_X)],T^4\cdot [(X,f_X)])\}}
\cdot L_{\gamma}([(X,f_X)])
\le
e^{16\mu}\cdot L_{\gamma}([(X,f_X)]).
\end{equation*}
Hence, by Lemma \ref{lemma::multi-twist_sum>=l_X(alpha_i)},
we get
\begin{align*}
L_{\alpha_i}([(X,f_X)])
&\le 3\Big(
L_{\gamma}([(X,f_X)]) + L_{\gamma}(T^4\cdot [(X,f_X)])
\Big)\\
&\le
3L_{\gamma}([(X,f_X)])(1+e^{16\mu})
\le 63(h-1)(1+e^{16\mu})\eqqcolon l_{\text{max}}(h,\mu).
\end{align*}
\end{proof}

We shall consider Fenchel-Nielsen coordinates for $\Teich_h$ associated with a pants decomposition $\mathcal{P}$ of $\Sigma_h$.
Let $\mathcal{P}=\{C_i\}$ be a set of closed curves on $\Sigma_h$.
The \emph{length parameter} of $C_i$ is denoted by $l_i=L_{C_i}([(X,f_X)])$.
However, the \emph{twist parameter} $\theta_i$ is chosen to be proportional along $C_i$ so that a positive Dehn twist along $C_i$ changes the
twist parameter by adding $2\pi$.
For $[(X,f_X)],[(X',f_{X'})] \in\Teich_h$, the \emph{Fenchel-Neilsen distance} with respect to $\mathcal{P}$ is defined by
\begin{equation*}
    d_{\text{FN},\mathcal{P}}([(X,f_X)],[(X',f_{X'})])=\sup_i \max\{|\log{l_i}-\log{l_i'}|,|l_i\theta_i-l_i'\theta_i'|\}
\end{equation*}
where $[(X,f_X)],[(X',f_{X'})]$ have Fenchel-Nielsen coordinates $c=(l_i,\theta_i)_i$ and $c'=(l_i',\theta_i')_i$ respectively.
Fenchel-Nielsen distance is introduced and investigated in \cite{ALPSS2011Fenchel}. 
In fact, we have the following quasi-isometric relation between Fenchel-Nielsen distance and Teichm\"uller distance.

\begin{proposition}[Proposition 8.4 and Corollary 8.8 in \cite{ALPSS2011Fenchel}]
\label{proposition::quai-isometry_FN_and_Teich}
Let $[(X,f_X)]$, $[(X',f_{X'})]\in\Teich_h$ be arbitrary and $\mathcal{P}=\{C_i\}$ be a pants decomposition of $\Sigma_h$
such that $[(X,f_X)]$, $[(X',f_{X'})]$ have Fenchel-Nielsen coordinates $c=(l_i,\theta_i)_i$, $c'=(l'_i,\theta'_i)_i$ respectively.
Suppose that, for some constants $N_1,N_2>0$, we have $l_i,l'_i\le N_1$ for all $i=1,2,\ldots$ and $d_{\text{FN},\mathcal{P}}([(X,f_X)],[(X',f_{X'})])\le N_2$.
Then,
\begin{equation*}
d_{\Teich}([(X,f_X)],[(X',f_{X'})])
\le
d_{\text{FN},\mathcal{P}}([(X,f_X)],[(X',f_{X'})])
\cdot N_3(N_1,N_2)
\end{equation*}
and
\begin{equation*}
d_{\text{FN},\mathcal{P}}([(X,f_X)],[(X',f_{X'})])
\le
d_{\Teich}([(X,f_X)],[(X',f_{X'})])
\cdot N_4(N_1,N_2)
\end{equation*}
where the constants  $N_3(N_1,N_2)$ and $N_4(N_1,N_2)$ depend only on $N_1$ and $N_2$.
\end{proposition}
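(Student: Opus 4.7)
The two inequalities are independent and I would prove them separately, following the strategy of \cite{ALPSS2011Fenchel}. For both, fix hyperbolic representatives $(X, f_X)$ and $(X', f_{X'})$ and work cuff-by-cuff along the pants decomposition $\mathcal{P}=\{C_i\}$.

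For the easier direction $d_{\text{FN},\mathcal{P}} \le N_4(N_1) \cdot d_{\Teich}$, I would control length and twist coordinates of each cuff separately. The length contribution is immediate: Wolpert's Lemma gives $|\log l_i - \log l'_i| \le 2 d_{\Teich}([(X,f_X)], [(X',f_{X'})])$. For the twist contribution, I would parameterize a Teichm\"uller geodesic $t\mapsto [(X_t, f_{X_t})]$ joining the two points, and invoke Wolpert's derivative estimates for the twist: along the geodesic, $|\partial_t(l_i(t)\theta_i(t))|$ is bounded by a constant depending only on an upper bound of $l_i(t)$. Since the hypothesis $l_i,l'_i\le N_1$ at the endpoints combined with Wolpert's length distortion implies $l_i(t)$ remains controlled (up to a factor $e^{2N_2}$) along the whole geodesic, integration yields $|l_i\theta_i - l'_i\theta'_i| \le C(N_1) \cdot d_{\Teich}$, completing the bound after taking the supremum over $i$.

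For the harder direction $d_{\Teich} \le N_3(N_1,N_2) \cdot d_{\text{FN},\mathcal{P}}$, I would construct an explicit quasiconformal diffeomorphism $\psi:X\to X'$ in the homotopy class of $f_{X'}\circ f_X^{-1}$ and estimate its dilatation. Using the collar lemma, decompose both surfaces into the union of geodesic pairs of pants $\{P_k\}$, $\{P'_k\}$ and symmetric collar annuli $A_i\supset C_i$, $A'_i\supset C'_i$ of definite modulus. On each pair of pants, lengths $l_i, l'_i$ of the three boundary curves differ by a factor at most $e^{N_2}$, so there is a quasiconformal map $P_k\to P'_k$ matching boundary lengths and parameterizations with dilatation bounded by a function of $\max_i|\log l_i-\log l'_i|$. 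On each collar annulus $A_i$, one accounts for the remaining twist discrepancy by a shear: modelling $A_i$ and $A'_i$ as flat cylinders of circumferences $l_i, l'_i$ and height $\sim \log(1/l_i)$, the twist shift $l_i\theta_i\to l'_i\theta'_i$ is realised by an affine map whose dilatation is controlled by the ratio of the twist difference to the collar width.

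The main obstacle is glueing these quasiconformal pieces along $\partial A_i$ while keeping the dilatation controlled by a single constant $N_3(N_1,N_2)$ depending only on the upper length bound $N_1$ and the FN distance bound $N_2$. The delicate regime is when some cuff length $l_i$ is very small: the pants are then cusp-like and the collar is very wide, but since the FN distance also controls $|\log l_i-\log l'_i|$, the two cusps on the two surfaces have comparable geometry, and the width of the collar absorbs the shear, so the dilatation contribution of the collar stays bounded. After assembling $\psi$ from the pants and collar pieces, the total dilatation $K(\psi)$ satisfies $\log K(\psi)\le N_3(N_1,N_2)\cdot d_{\text{FN},\mathcal{P}}$, and the definition of the Teichm\"uller distance then gives the desired inequality.
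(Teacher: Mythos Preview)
The paper does not actually prove this proposition; its proof is a two–line citation to Proposition~8.4 and Corollary~8.8 of \cite{ALPSS2011Fenchel}. Your sketch is an attempt to reproduce the arguments behind that citation, so in a sense you are doing more than the paper does, and the outline (explicit quasiconformal map built from pants and collars for the first inequality; Wolpert--type length/twist control for the second) is the right one.

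There is, however, a genuine issue in your argument for the second inequality. You bound $l_i(t)$ along the Teichm\"uller geodesic by $N_1e^{2N_2}$, but $N_2$ is a bound on $d_{\text{FN}}$, not on $d_{\Teich}$, so this step is unjustified as written; to control $l_i(t)$ you would first have to invoke the already--proved first inequality, obtaining $d_{\Teich}\le N_2\cdot N_3(N_1,N_2)$ and hence only $l_i(t)\le N_1e^{2N_2N_3}$. Either way your $N_4$ ends up depending on both $N_1$ and $N_2$, contrary to the stated $N_4(N_1)$. This dependence in fact appears unavoidable: for an iterated Dehn twist $T_C^n$ at a cuff of fixed length, $d_{\text{FN}}$ grows linearly in $n$ while $d_{\Teich}$ grows only like $\log n$ (e.g.\ via Kerckhoff's extremal--length formula), so $d_{\text{FN}}/d_{\Teich}$ is bounded only once $d_{\text{FN}}\le N_2$ is imposed. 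Thus the proposition's claimed dependence for $N_4$ is likely overstated; this does not, however, affect its use in Proposition~\ref{proposition::l<=K*epsilon}, where an a~priori bound on $d_{\text{FN}}$ in terms of $h$ and $\mu$ is available.
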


We improve Proposition \ref{proposition::bounded_lmax} using Proposition \ref{proposition::quai-isometry_FN_and_Teich}.

\begin{proposition}
\label{proposition::l<=K*epsilon}
Given $\mu\in\Z_{>0}$,
there exists a constant $K_1=K_1(h,\mu)$ that depends only on $h$ and $\mu$ such that,
for any multi-twist $T=T_{\alpha_1}^{r_1}\circ\cdots\circ T_{\alpha_m}^{r_m}\in \Mod_h$ along
$\boldsymbol{\alpha}=\{\alpha_1,\ldots,\alpha_m\}$
and $[(X,f_X)]\in\Teich_h$,
if
\begin{equation*}
d_{\Teich}([(X,f_X)],T\cdot [(X,f_X)])\le 2\mu
\end{equation*}
then for all $i=1,\ldots,m$, we get
\begin{equation*}
L_{\alpha_i}([(X,f_X)])
\le
K_1 d_{\Teich}([(X,f_X)],T\cdot [(X,f_X)]).
\end{equation*}
\end{proposition}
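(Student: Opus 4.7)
The plan is to promote the uniform bound of Proposition \ref{proposition::bounded_lmax} to a linear bound via Fenchel--Nielsen coordinates. First I would invoke Proposition \ref{proposition::bounded_lmax} to get $L_{\alpha_i}([(X,f_X)]) \le l_{\text{max}}(h,\mu)$ for every $i=1,\ldots,m$. Since the curves $\alpha_i$ form a multi-curve they are pairwise disjoint, so they can be extended to a pants decomposition $\mathcal{P}=\{C_1,\ldots,C_{3h-3}\}$ of $\Sigma_h$ containing $\boldsymbol{\alpha}$. Because the $\alpha_i$ already have length $\le l_{\text{max}}$ on $(X,f_X)$, after cutting along $\boldsymbol{\alpha}$ one may complete $\boldsymbol{\alpha}$ to $\mathcal{P}$ using a Bers-type pants decomposition on each piece; this yields a constant $N_1=N_1(h,\mu)$ with $L_{C_j}([(X,f_X)]) \le N_1$ for all $j$.

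Next I observe that $T$ acts in Fenchel--Nielsen coordinates associated with $\mathcal{P}$ in a very controlled way: since every $\alpha_i$ lies in $\mathcal{P}$, the mapping class $T$ preserves the lengths of all curves of $\mathcal{P}$ and the twist parameters along curves of $\mathcal{P}\setminus\boldsymbol{\alpha}$, while shifting the twist parameter along each $\alpha_i$ by exactly $2\pi r_i$ (by the chosen normalization). In particular
\begin{equation*}
L_{C_j}(T\cdot[(X,f_X)])=L_{C_j}([(X,f_X)]) \le N_1 \quad \text{for all } j,
\end{equation*}
so the pair $[(X,f_X)], T\cdot[(X,f_X)]$ satisfies the length hypothesis of Proposition \ref{proposition::quai-isometry_FN_and_Teich} with constant $N_1$. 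Writing $l_i=L_{\alpha_i}([(X,f_X)])$, the definition of the Fenchel--Nielsen distance immediately gives, for each fixed $i \in \{1,\ldots,m\}$,
\begin{equation*}
d_{\text{FN},\mathcal{P}}\bigl([(X,f_X)],T\cdot[(X,f_X)]\bigr) \ge \bigl|l_i\theta_i - l_i(\theta_i+2\pi r_i)\bigr| = 2\pi l_i |r_i| \ge 2\pi l_i,
\end{equation*}
since $r_i \in \Z\setminus\{0\}$.

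Finally I apply the second inequality of Proposition \ref{proposition::quai-isometry_FN_and_Teich} (whose constant $N_4$ depends only on $N_1$, hence only on $h$ and $\mu$) to deduce
\begin{equation*}
2\pi l_i \le d_{\text{FN},\mathcal{P}}\bigl([(X,f_X)],T\cdot[(X,f_X)]\bigr) \le N_4(N_1)\cdot d_{\Teich}\bigl([(X,f_X)],T\cdot[(X,f_X)]\bigr),
\end{equation*}
which gives the desired bound with $K_1(h,\mu) \coloneqq N_4(N_1)/(2\pi)$.

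The step I expect to require the most care is the extension of $\boldsymbol{\alpha}$ to a pants decomposition $\mathcal{P}$ with all lengths controlled by a constant depending only on $h$ and $\mu$. Everything else is a clean translation between the two metrics on $\Teich_h$, but the Bers-style completion needs the a priori bound $l_{\text{max}}(h,\mu)$ from Proposition \ref{proposition::bounded_lmax} and a standard estimate bounding the Bers constant of a bordered hyperbolic surface in terms of the genus, the number of boundary components and the boundary lengths.
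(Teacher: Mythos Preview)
Your proof is correct and follows essentially the same route as the paper: extend $\boldsymbol{\alpha}$ to a pants decomposition with lengths controlled via Proposition~\ref{proposition::bounded_lmax} and a Bers-type completion, then read off $L_{\alpha_i}$ from the Fenchel--Nielsen distance and compare with $d_{\Teich}$ via Proposition~\ref{proposition::quai-isometry_FN_and_Teich}. Your observation that $T$ preserves every $L_{C_j}$ (since $\boldsymbol{\alpha}\subset\mathcal{P}$) is in fact slightly cleaner than the paper's argument, which bounds $L_{C_j}(T\cdot[(X,f_X)])$ by Wolpert's lemma instead.
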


\begin{proof}
As in Proposition \ref{proposition::bounded_lmax}, we assume that $(X,f_X)$ is a hyperbolic representative of the given $[(X,f_X)]$ and let $\boldsymbol{\alpha}_X$ be the set of geodesics on $X$ homotopic to $f_X(\alpha_i)$ for every $\alpha_i\in\boldsymbol{\alpha}$.
Therefore, there exists a(nother) geodesic pants decomposition $\mathcal{P}_X=\{\gamma_i\}$ of $X$ such that 
\begin{itemize}[-]
\item $\boldsymbol{\alpha}_X\subseteq \mathcal{P}_X$;

\item $l_X(\gamma)\le \Bers(\boldsymbol{\alpha}_X)$ for each $\gamma\in \mathcal{P}_X$,
where $\Bers(\boldsymbol{\alpha}_X)$ is a variation of Bers' constant that depends only on $h$ and lengths of every geodesics in $\boldsymbol{\alpha}_X$,
therefore depends only on $h$ and $\mu$.
\end{itemize}
Set $C_i=f_X^{-1}(\gamma_i)$ and then $\mathcal{P}=\{C_i\}$ is a pants decomposition of $\Sigma$ such that $L_{C_i}([(X,f_X)])\le \Bers(\boldsymbol{\alpha}_X)$, for each $C_i$.
By Wolpert's Lemma \ref{lemma::wolpert}, we further have
\begin{equation*}
L_{C_i}(T\cdot [(X,f_X)]) \le \exp{\{2\cdot d_{\Teich}([(X,f_X)],T\cdot [(X,f_X)]) \}} \cdot L_{C_i}([X,f_X])
\le e^{4\mu} \Bers(\boldsymbol{\alpha}_X).
\end{equation*}
Therefore, by Proposition \ref{proposition::quai-isometry_FN_and_Teich}, there exists a constant $K_1$ depending only on $\Bers(\boldsymbol{\alpha}_X)$ and $e^{4\mu}\Bers(\boldsymbol{\alpha}_X)$ such that
\begin{equation*}
L_{\alpha_i}([(X,f_X)])
\le
\frac{1}{2 \pi}
d_{\text{FN},\mathcal{P}}([(X,f_X)],T\cdot [(X,f_X)])
\le
K_1 d_{\Teich}([(X,f_X)],T\cdot [(X,f_X)])
\end{equation*}
for all $i=1,\ldots,m$.
\end{proof}

From now on, we consider a hyperbolic cusp region $U$ and a non-constant map $F:U\rightarrow\mathcal{M}_h$ that is distance-decreasing for $(1/2) d_U$ on $U$ and $d_{\mathcal{M}}$ on $\mathcal{M}_h$.

Suppose that $U=\langle g \rangle \backslash \mathcal{B}$ with $\mathcal{B}\subset\Hyperbolic^2$ a horoball and $g\in \Aut(\Hyperbolic^2)$ a parabolic isometry.
Therefore, the map $F$ can be lifted to a map $\widetilde{F}:\mathcal{B}\rightarrow \Teich_h$.
Let $\phi\in \Mod_h$ be such that, given a generating loop $\gamma\subset U$ based at $p\in U$ and a lift $\widetilde{p}\in \mathcal{B}$ of $p$, one can lift $F(\gamma)$ to a path joining $\widetilde{F}(\widetilde{p})$ to $\phi \cdot \widetilde{F}(\widetilde{p})$.
This mapping class $\phi$ should satisfy the inequality 
$\epsilon / 2 \ge d_{\Teich}(\widetilde{F}(\widetilde{p}), \phi \cdot \widetilde{F}(\widetilde{p}))$,
where $0<\epsilon\le 2$ and the horocycle $H_{\epsilon}\subset U$ of length $\epsilon$ contains $p$.
Such a mapping class is called the monodromy of $F$.

We suppose that a power $\phi^{\mu}$ is exactly a multi-twist $T$ along $\boldsymbol{\alpha}$.
Each point $p\in U$ determines the unique horocycle $H_{\varepsilon}\subset U$ such that $p\in H_{\varepsilon}$.
Each lift $\widetilde{p}\in \mathcal{B}$ of $p$ determines the 
geodesic length $l=L_{\alpha_1}(\widetilde{F}(\widetilde{p}))$.
We associate the length amount $\epsilon$ and the length amount $l$ to show that $F$ is a quasi-isometric embedding.

\begin{theorem}
\label{theorem::cusp_region->end is quasi-isometric}
Given $\epsilon>0$,
there exists
$K_2=K_2(h,\mu,\epsilon)$, depending only on $h$, $\mu$ and $\epsilon$, that satisfies the following:
Suppose that $\sys(F(p_{\text{max}}))\ge \epsilon$ for some $p_{\text{max}}\in\partial U$.
Then, for any pair of points $(p_1,p_2)$ in $U$,
we have
\begin{equation*}
\frac{1}{2} d_U(p_1,p_2)\ge
d_{\mathcal{M}}(F(p_1),F(p_2))\ge
\frac{1}{2} d_U(p_1,p_2)-K_2.
\end{equation*}
\end{theorem}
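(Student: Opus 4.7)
The upper bound is immediate from $F$ being distance-decreasing for $(1/2)d_U$ and $d_{\mathcal{M}}$. For the lower bound, I parametrise $U$ by its horocycle length: identify $\mathcal{B}$ with $\{z\in\Hyperbolic^2 \mid \Image(z)\geq 1\}$ quotiented by $z\mapsto z+2$, so that $\partial U$ is the horocycle at $y=1$ of length $2$ and each $p\in U$ has horocycle length $\varepsilon_p = 2/\Image(\widetilde{p})\in(0,2]$. The strategy is to use $\sys:\mathcal{M}_h\to \R_{>0}$ as a ``radial coordinate'' that separates the images $F(p_1)$, $F(p_2)$ at different cusp depths.

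\textbf{Key estimates.} A loop once around the horocycle through $p$ has $U$-length $\varepsilon_p$ and lifts to a path from $\widetilde{F}(\widetilde{p})$ to $\phi\cdot\widetilde{F}(\widetilde{p})$ in $\Teich_h$, so distance-decreasing gives $d_{\Teich}(\widetilde{F}(\widetilde{p}),\phi^\mu\widetilde{F}(\widetilde{p}))\leq\mu\varepsilon_p/2\leq\mu\leq 2\mu$. Applying Proposition \ref{proposition::l<=K*epsilon} to the multi-twist $T=\phi^\mu$ then yields $L_{\alpha_i}(\widetilde{F}(\widetilde{p}))\leq K_1\mu\varepsilon_p/2$ for all $i$, and since each $\alpha_i$ is an essential simple closed curve (because $r_i\neq 0$),
\[
\sys(F(p))\leq K_1\mu\varepsilon_p/2.
\]
On the other hand, Wolpert's lemma applied to a curve realising the systole shows that $\log\sys$ is $2$-Lipschitz on $\Teich_h$, and because $\sys$ is $\Mod_h$-invariant it descends to a $2$-Lipschitz function on $(\mathcal{M}_h, d_{\mathcal{M}})$. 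Combined with distance-decreasing and the hypothesis $\sys(F(p_{\max}))\geq\epsilon$, one gets
\[
\sys(F(p))\geq e^{-2d_{\mathcal{M}}(F(p),F(p_{\max}))}\epsilon \geq e^{-d_U(p,p_{\max})}\epsilon \geq \tfrac{1}{2}e^{-C_0}\epsilon\,\varepsilon_p,
\]
where the last inequality uses the elementary horoball estimate $d_U(p,p_{\max})\leq\log(2/\varepsilon_p)+C_0$ for a universal constant $C_0$.

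\textbf{Conclusion and main obstacle.} Given $p_1, p_2 \in U$ with $\varepsilon_{p_1}\leq\varepsilon_{p_2}$, the two bounds above combined with the Wolpert-Lipschitz property of $\sys$ applied to $F(p_1), F(p_2)$ produce
\[
2\,d_{\mathcal{M}}(F(p_1),F(p_2)) \geq \log\sys(F(p_2))-\log\sys(F(p_1)) \geq \log(\varepsilon_{p_2}/\varepsilon_{p_1}) - C_0 - \log(K_1\mu/\epsilon).
\]
Combining with the horoball estimate $d_U(p_1,p_2)\leq\log(\varepsilon_{p_2}/\varepsilon_{p_1})+C_0'$ gives $d_{\mathcal{M}}(F(p_1),F(p_2))\geq (1/2)d_U(p_1,p_2)-K_2$ with $K_2=K_2(h,\mu,\epsilon)$ depending only on these parameters (through $K_1$, $C_0$, $C_0'$, and $\epsilon$). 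When $\varepsilon_{p_2}/\varepsilon_{p_1}$ is too small for the displayed lower bound to be positive, $d_U(p_1,p_2)$ is itself bounded by a constant depending only on $h,\mu,\epsilon$, and the inequality holds trivially after enlarging $K_2$. The main difficulty is bridging the point-wise multi-twist estimate, which only controls systole from above, with a two-point lower bound on Teichm\"uller distance; the prescribed systole at $p_{\max}$ is what makes the Wolpert-Lipschitz property usable to produce the matching lower bound on $\sys(F(p_2))$.
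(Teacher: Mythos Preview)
Your proof is correct and uses the same essential ingredients as the paper's---Proposition~\ref{proposition::l<=K*epsilon} for the upper systole bound at depth~$\varepsilon_p$, Wolpert's lemma, and elementary horoball geometry---but organizes them differently. The paper tracks the single length function $L_{\alpha_1}$ (defined on $\Teich_h$, so it fixes lifts) and first proves the one-sided estimate $d_{\mathcal{M}}(F(p),F(p_{\max}))\ge \tfrac12 d_U(p,p_{\max})-K'_2$; for two arbitrary points it then applies the reverse triangle inequality $d_{\mathcal{M}}(q_1,q_2)\ge |d_{\mathcal{M}}(q_1,q_{\max})-d_{\mathcal{M}}(q_2,q_{\max})|$ in $\mathcal{M}_h$, combining the lower bound at $p_1$ with the distance-decreasing upper bound at $p_2$. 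You instead work entirely on $\mathcal{M}_h$ with the $\Mod_h$-invariant function $\sys$, first converting the hypothesis at $p_{\max}$ into a two-sided estimate $c_1\varepsilon_p\le \sys(F(p))\le c_2\varepsilon_p$ valid at every $p$, and then comparing $F(p_1)$ and $F(p_2)$ directly via the $2$-Lipschitz property of $\log\sys$. Your route is a bit more conceptual (no lifts, a single radial coordinate on moduli space) at the cost of the extra intermediate step bounding $\sys(F(p_2))$ from below; the paper's route is more direct computationally but needs the detour through $p_{\max}$ for the general pair. Both yield the same constant dependence.
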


\begin{proof}
Let $p_{\max}\in\partial U$ and $p\in H_{\varepsilon}\subset U$ be arbitrary with $\varepsilon\le 2$.
Take a lift $\widetilde{p_{\max}}\in \mathcal{B}$ of $p_{\max}$ and a lift $\widetilde{p}\in \mathcal{B}$ of $p$ such that $d_{\mathcal{M}}(F(p_{\max}), F(p))=d_{\Teich}(\phi_{\max} \cdot \widetilde{F}(\widetilde{p_{\max}}), \phi \cdot \widetilde{F}(\widetilde{p}))$,
for some $\phi_{\max}, \phi \in \Mod_h$.
For convenience, we set $q_{\max}=F(p_{\max})$, $q=F(p)$, $\widetilde{q_{\max}}=\widetilde{F}(\widetilde{p_{\max}})$ and $\widetilde{q}=\widetilde{F}(\widetilde{p})$.
By Wolpert's Lemma \ref{lemma::wolpert}, Proposition \ref{proposition::l<=K*epsilon} and the triangle inequality in $(U,d_U)$, we have
\begin{align*}
d_{\mathcal{M}}(q,q_{\max})
&=d_{\Teich}(\widetilde{q},\phi^{-1}\circ \phi_{\max}\cdot \widetilde{q_{\max}})
\ge
\frac{1}{2}\log{
  \frac{L_{\alpha_1}(\phi^{-1}\circ \phi_{\max}\cdot \widetilde{q_{\max}})}{L_{\alpha_1}(\widetilde{q})}
}\\
&\ge
\frac{1}{2}\log{
  \frac{\sys(\widetilde{q_{\max}})}{K_1(h,\mu) d_{\Teich}(\widetilde{q}, T\cdot \widetilde{q})}
}
\ge
\frac{1}{2}\log{
  \frac{\sys(q_{\max})}{K_1(h,\mu) \mu \epsilon / 2}
}
=
\frac{1}{2}\Big\{\log{
  \frac{\sys(q_{\max})}{K_1(h,\mu) \mu}
}
+
\log{\frac{2}{\epsilon}}\Big\}\\
&\ge
\frac{1}{2}d_U(p,p_{\max})-K'_2
\end{align*}
where $K'_2=1-(1/2)\log{\sys(q_{\max})}+(1/2)\log{K_1(h,\mu) \mu}$.

In general, let $p_1,p_2\in U$ be arbitrary.
Set $q_1=F(p_1)$, $q_2=F(p_2)$ and
take the corresponding horocycles
$H_{\varepsilon_1}\ni p_1$, $H_{\varepsilon_2}\ni p_2$.
Using the above inequality and triangle inequalities in both $(U,d_U)$ and $(\mathcal{M}_h,d_{\mathcal{M}})$, we conclude that
\begin{align*}
\frac{1}{2}d_U(p_1,p_2)
&\ge
d_{\mathcal{M}}(q_1,q_2)
\ge
|d_{\mathcal{M}}(q_1,q_{\max})-d_{\mathcal{M}}(q_2,q_{\max})|
\ge
\frac{1}{2}|d_U(p_1,p_{\max})-d_U(p_2,p_{\max})|-K'_2\\
&\ge
\frac{1}{2} \max\Big\{
  \Big(\log{\frac{2}{\varepsilon_1}}-2\Big)-
  \Big(\log{\frac{2}{\varepsilon_2}}+2\Big),
  \Big(\log{\frac{2}{\varepsilon_2}}-2\Big)-
  \Big(\log{\frac{2}{\varepsilon_1}}+2\Big)
\Big\}-K'_2\\
&=
\frac{1}{2} \max\Big\{
  \log{\frac{\varepsilon_1}{\varepsilon_2}},
  \log{\frac{\varepsilon_2}{\varepsilon_1}}
\Big\}-2-K'_2\\
&\ge
\frac{1}{2}(d_U(p_1,p_2)-2)-2-K'_2
=
\frac{1}{2}d_U(p_1,p_2)-3-K'_2.
\end{align*}
\end{proof}
\subsection{Proof of Theorem \ref{thmx::quasi-isometry}}
\label{subsection::Theorem A}


Consider an oriented hyperbolic surface $B$ of type $(g,n)$, which has $n$ cusps.
Let $U_i$ be the cusp region of the $i$-th cusp, which is of area $2$ and bounded by a horocycle of length $2$, for $i=1,\ldots, n$.
The complement is a compact hyperbolic surface with boundary, denoted by $B_{cp}\subset B$.

The proof of Theorem \ref{thmx::quasi-isometry} - (\RNum{1}) is made up of two lemmata.
The first lemma claims that the holomorphic map restricted to a cusp region $U_i$ is a quasi-isometric embedding whose parameters depend not only on $(g,n)$, $h$ and $\sys(B)$ but also on $\sys(F(b))$ for an arbitrary point $b\in B_{cp}$.
The second lemma claims that $\sys(F(b))$ is bounded uniformly for $b\in B_{cp}$.
Theorem \ref{thmx::quasi-isometry} - (\RNum{2}) is a consequence of Theorem \ref{thmx::quasi-isometry} - (\RNum{1}) due to the fact that $\diam(\overline{B_{cp}})$ has an upper bound based on $\sys(B)$.

\begin{lemma}
\label{lemma::quasi-isometric embedding}
Given $\epsilon>0$,
there exists a constant $K_3=K_3(g,n,h,\epsilon)$, depending only on $(g,n)$, $h$ and $\epsilon$, such that the following holds:
Let $B$ be an oriented hyperbolic surface of type $(g,n)$ and let $F:B\rightarrow \mathcal{M}_h$ be a non-constant holomorphic map with a monodromy homomorphism $F_*\in \Hom(\pi_1(B,t),\Mod_h)$
such that the peripheral monodromy of the $i$-th cusp is of infinite order, for some $i=1,\ldots,n$.
Suppose that $\sys(B)\ge \epsilon$ and
$\sys(F(b))\ge \epsilon$ for some $b\in B_{cp}$.
Then, for each pair of points $(p_1,p_2)\in U_i\times U_i$, we have
\begin{equation*}
\frac{1}{2}d_B(p_1,p_2)\ge
d_{\mathcal{M}}(F(p_1),F(p_2))\ge
\frac{1}{2}d_B(p_1,p_2)-K_3.
\end{equation*}
\end{lemma}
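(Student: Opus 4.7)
The plan is to apply Theorem \ref{theorem::cusp_region->end is quasi-isometric} to the restriction $F|_{U_i}:U_i\to\mathcal{M}_h$, since the upper half of the asserted inequality, $\tfrac{1}{2}d_B\geq d_{\mathcal{M}}$, is automatic from the distance-decreasing property of holomorphic maps for Kobayashi distances. Three hypotheses of that theorem must be verified in a way whose constants depend only on $(g,n,h,\epsilon)$: (a) the peripheral monodromy $\phi$ of the $i$-th cusp has a power $\phi^{\mu}$ which is a multi-twist, with $\mu$ bounded in terms of $h$ alone; (b) some $p_{\max}\in\partial U_i$ satisfies $\sys(F(p_{\max}))\geq \epsilon'$ for a uniform $\epsilon'=\epsilon'(g,n,h,\epsilon)>0$; and (c) the intrinsic metric $d_{U_i}$ controls $d_B|_{U_i\times U_i}$.

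Item (a) follows from the Bers classification recalled in Subsection \ref{subsection::Teichmuller}: Corollary \ref{corollary::peripheral monodromy} gives $\tau(\phi)=0$, while the infinite order hypothesis rules out periodicity, so $\phi$ is reducible with unattained translation distance, hence $\phi^{\mu}$ is a multi-twist for some $\mu\leq\mu(h)$ depending only on $h$.

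For item (b), Mumford's compactness (Subsection \ref{subsection::Mumford}) applied to hyperbolic surfaces of type $(g,n)$ with systole at least $\epsilon$ yields a uniform bound $\diam(\overline{B_{cp}})\leq D(g,n,\epsilon)$. Pick any $p_{\max}\in\partial U_i\subset B_{cp}$ and lifts $\widetilde{b},\widetilde{p_{\max}}\in\Hyperbolic^2$ with $d_{\Hyperbolic^2}(\widetilde{b},\widetilde{p_{\max}})$ arbitrarily close to $d_B(b,p_{\max})$. The distance-decreasing property of $\widetilde{F}$ gives
\begin{equation*}
    d_{\Teich}\bigl(\widetilde{F}(\widetilde{b}),\widetilde{F}(\widetilde{p_{\max}})\bigr)\leq \tfrac{1}{2}d_B(b,p_{\max})\leq \tfrac{1}{2}D(g,n,\epsilon).
\end{equation*}
Let $\gamma$ be a closed geodesic realising $\sys(F(p_{\max}))$; by Wolpert's lemma applied to its homotopy class,
\begin{equation*}
    \sys(F(b))\leq L_{\gamma}\bigl(\widetilde{F}(\widetilde{b})\bigr)\leq e^{D(g,n,\epsilon)}\cdot L_{\gamma}\bigl(\widetilde{F}(\widetilde{p_{\max}})\bigr)=e^{D(g,n,\epsilon)}\cdot \sys(F(p_{\max})),
\end{equation*}
so $\sys(F(p_{\max}))\geq \epsilon\cdot e^{-D(g,n,\epsilon)}\eqqcolon \epsilon'$.

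With (a) and (b) established, Theorem \ref{theorem::cusp_region->end is quasi-isometric} applies to $F|_{U_i}$ with constant $K_2=K_2(h,\mu(h),\epsilon')$, yielding
\begin{equation*}
    d_{\mathcal{M}}(F(p_1),F(p_2))\geq \tfrac{1}{2}d_{U_i}(p_1,p_2)-K_2
\end{equation*}
for all $p_1,p_2\in U_i$. For item (c), any path in $U_i$ is also a path in $B$, so $d_{U_i}(p_1,p_2)\geq d_B(p_1,p_2)$; combining with the automatic upper bound gives the claim with $K_3=K_2(h,\mu(h),\epsilon'(g,n,\epsilon))$, depending only on $(g,n,h,\epsilon)$. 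The main obstacle is step (b): transferring the systole lower bound hypothesis from an interior point $b\in B_{cp}$ to the cusp boundary $\partial U_i$ is essential in order to feed Theorem \ref{theorem::cusp_region->end is quasi-isometric}, and requires both a uniform diameter control of $B_{cp}$ via Mumford's compactness and Wolpert's lemma to propagate the systole bound across bounded Teichm\"uller distance.
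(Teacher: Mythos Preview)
Your proof is correct and follows essentially the same approach as the paper's: bound $\mu$ via the Bers classification, propagate the systole lower bound from $b$ to some $p_{\max}\in\partial U_i$ using the Mumford-compactness diameter bound on $B_{cp}$ together with Wolpert's lemma, then invoke Theorem~\ref{theorem::cusp_region->end is quasi-isometric}. Your explicit treatment of the $d_{U_i}$ versus $d_B$ comparison in item~(c) is a detail the paper leaves implicit.
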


\begin{proof}
Since $F$ is holomorphic, it is automatically distance-decreasing for $(1/2) d_B$ on $B$ and $d_{\mathcal{M}}$ on $\mathcal{M}_h$.
Now regard the hyperbolic surface $B$ as the union of the compact region $B_{cp}$ and $n$ more disjoint cusp regions $U_1,\ldots,U_n$ each bounded by a horocycle of length $2$.
Select an arbitrary boundary point $p_{\max,i}\in \partial U_i$ and
let $\diam{(B_{cp})}$ be the diameter of $B_{cp}$.

A peripheral monodromy of the $i$-th cusp, denoted by $\phi$, is reducible, of infinite order and has no pseudo-Anosov reduced component.
Therefore,
some power $\phi^{\mu}$ is identity on each component, where $\mu$ is bounded above by a constant determined by $h$.
Hence, $\phi^{\mu}$ is a multi-twist.

Suppose that $\sys(F(b))\ge \epsilon$ for some $b\in B_{cp}$.
Let $(p_1,p_2)$ be a pair of points in $U_i$.
By Theorem \ref{theorem::cusp_region->end is quasi-isometric}, the difference between
$d_{\mathcal{M}}(F(p_1),F(p_2))$
and
$(1/2)d_B(p_1,p_2)$
is bounded by a constant depending only on $h,\mu$ and a lower bound of $\sys(F(p_{\max,i}))$.
By Mumford's compactness,
the diameter $\diam{(B_{cp})}$ is bounded by a constant determined by
$(g,n)$ and $\epsilon$.
Wolpert's Lemma \ref{lemma::wolpert} then shows a mutual dependence between $\sys(F(b))$ and $\sys(F(p_{\max,i}))$.
Hence, the unique parameter used in the desired inequality depends only on $(g,n)$, $h$ and $\epsilon$.
\end{proof}

\begin{lemma}
\label{lemma::bounded systole for compact part}
Given $\epsilon>0$,
there exists a constant $K_4=K_4(g,n,h,\epsilon)$, depending only on $(g,n)$, $h$ and $\epsilon$, such that the following holds:
Let $B$ be an oriented hyperbolic surface of type $(g,n)$ such that $\sys(B)\ge \epsilon$
and let $F:B\rightarrow \mathcal{M}_h$ be a non-constant holomorphic map.
Then, for any $p\in B_{cp}$, we have
\begin{equation*}
    \sys(F(p)) \ge K_4.
\end{equation*}
\end{lemma}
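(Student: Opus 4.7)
The plan is to apply Theorem \ref{theorem::Ct_belongs_to_compactK} to obtain a basepoint $t'_B \in B$ at which $F(t'_B)$ lies in a uniform compact subset of $\mathcal{M}_h$, and then to propagate the resulting systole bound to an arbitrary $p \in B_{cp}$ using the distance-decreasing property of $F$ for the Kobayashi/Teichm\"uller distances.

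More precisely, Theorem \ref{theorem::Ct_belongs_to_compactK} supplies a constant $N'' = N''(g,n,\epsilon)$, a compact subset $\mathcal{K}' \subset \mathcal{M}_h^{\ge \delta_0}$ with $\delta_0 := \tau_h/N''^{3h-3}$, and an orientation preserving diffeomorphism $f'_B : \Sigma_{g,n} \to B$ sending $s$ to some $t'_B$ with $F(t'_B) \in \mathcal{K}'$; in particular $\sys(F(t'_B)) \ge \delta_0$, where $\delta_0$ depends only on $(g,n,h,\epsilon)$.

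The crucial step is to bound $d_B(t'_B, p)$ uniformly for $p \in B_{cp}$. Since $\sys(B) \ge \epsilon$, Mumford's compactness applied to $B$ itself yields $\diam(B_{cp}) \le D_1(g,n,\epsilon)$, so it is enough to bound $d_B(t'_B, B_{cp})$. Unwinding the constructions in the proofs of Theorems \ref{theorem::question 2_for_b=0} and \ref{theorem::Ct_belongs_to_compactK}, the point $t'_B$ may be taken to lie on a union $\Delta_B \subset B$ of closed geodesics whose total length is bounded by some $L = L(g,n,\epsilon)$. A direct computation in the standard cusp model (a lifted geodesic is a Euclidean semicircle of radius $r$ reaching height $r$, contributing hyperbolic length $\approx 2\log r$ to the portion above height $1$) shows that the portion of a geodesic lying inside a length-$2$ horocyclic cusp region has hyperbolic length approximately twice its maximal depth; consequently a closed geodesic of length at most $L$ can penetrate any $U_i$ to depth at most $L/2 + O(1)$. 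This gives $d_B(t'_B, B_{cp}) \le L/2 + O(1)$, whence $d_B(t'_B, p) \le D_1 + L/2 + O(1) =: D$ for every $p \in B_{cp}$.

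To conclude, the distance-decreasing property of $F$ gives $d_{\mathcal{M}}(F(t'_B), F(p)) \le D/2$, and Wolpert's Lemma then yields $\sys(F(p)) \ge e^{-D}\sys(F(t'_B)) \ge \delta_0\, e^{-D} =: K_4$. The only non-routine input is the depth estimate for bounded-length closed geodesics, which I expect to be the main obstacle since it requires opening up the construction inside Theorem \ref{theorem::Ct_belongs_to_compactK} to exhibit $t'_B$ on a short geodesic; the remainder is a direct assembly of Theorem \ref{theorem::Ct_belongs_to_compactK}, Mumford's compactness, Wolpert's Lemma, and the fact that holomorphic maps decrease Kobayashi distances.
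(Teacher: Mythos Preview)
Your proposal is correct and is precisely the approach the paper sketches in one line (``derive from Theorem \ref{theorem::Ct_belongs_to_compactK} and Wolpert's Lemma''). The depth estimate you flag as the main obstacle is in fact immediate without opening up the construction of $\Delta_B$: since each standard loop has a representative of length $\le N''$ based at $t'_B$, and any loop of length $<2d$ based at a point of depth $d$ in a cusp region stays in that cusp region (hence is homotopic to a power of a single peripheral loop), the generators of $\pi_1(B,t'_B)$ could not all be realised by such short loops if $t'_B$ lay at depth $>N''/2$ in some $U_i$.
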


\begin{proof}
One can derive this from Theorem \ref{theorem::Ct_belongs_to_compactK} and Wolpert's Lemma \ref{lemma::wolpert}.
\end{proof}
\subsection{Quasi-isometrically embedded fundamental polygon}
\label{subsection::Theorem B}

As in Theorem \ref{thmx::quasi-isometry-fundamental-domain-n=0} from the introduction,
we provide a necessary condition for the existence of quasi-isometrically embedded fundamental domains in this subsection, for the general case.

\begin{theorem}
\label{theorem::quasi-isometry-fundamental-domain}
Let $(g,n)$, $h$ and $\epsilon$ satisfy $2g-2+n>0$, $h\ge 2$ and $\epsilon >0$.
There exists a constant $K=K(g,n,h,\epsilon)$, depending only on $g,n,h$ and $\epsilon$, such that the following holds.
Let $B=\Gamma \backslash \Hyperbolic^2$ be an oriented hyperbolic surface of type $(g,n)$ with $\sys(B)\ge \epsilon$
and cusp regions $U_1,\ldots, U_n\subset B$.
Let $D\subset \Hyperbolic^2$ be a convex fundamental polygon for $B$, having exactly $n$ ideal points.
Suppose that
$F:B\rightarrow\mathcal{M}_h$ is a non-constant holomorphic map with a monodromy homomorphism $F_*\in \Hom(\pi_1(B,t),\Mod_h)$.
Assume the following conditions:
\begin{itemize}[-]
\item The peripheral monodromies of all cusps are of infinite order.

\item For every pair $i\neq j$ with $i,j \in \{1,\ldots,n\}$, there exists a geodesic segment $\kappa_{i,j}\subset B$ joining $\partial U_i$ to $\partial U_j$, with a lift $\widetilde{\kappa_{i,j}} \subset D$,
such that peripheral monodromies of $U_i$ and $U_j$ are not disjointed along $\kappa_{i,j}$,
\end{itemize}
Then, the restriction
$\left.\widetilde{F}\right|_{D}: (D,(1/2)d_{\Hyperbolic^2}) \rightarrow (\Teich_h,d_{\Teich})$ is a $(2,K+\diam(D_{\text{cp}}))$-quasi-isometric embedding,
where $D_{\text{cp}}$ is the compact part of $D$ obtained by removing the lift of all cusp regions.
\end{theorem}

In the case $(g,n)=(0,n)$, suppose that $(\phi_1,\ldots,\phi_n)$ is a global monodromy,
then there exists a fundamental polygon $D\subset \Hyperbolic^2$ of $B$ and segments $\kappa_{i,j}\subset B$ together with lifts $\widetilde{\kappa_{i,j}}\subset D$ that satisfy the following equivalence:
the peripheral monodromies of $U_i$ and $U_j$ are disjointed along $\kappa_{i,j}$ if and only if $\phi_i$ and $\phi_j$ are disjointed.
On the other hand, a different choice of $\gamma_1, \ldots, \gamma_n$ changes $(\phi_1,\ldots,\phi_n)$ by a sequence of Hurwitz moves:
\begin{equation*}
(\ldots,\phi_{i}\circ\phi_{i+1}\circ\phi_i^{-1},\phi_i,\ldots)
\xmapsfrom{L_i}
(\ldots,\phi_i,\phi_{i+1},\ldots)
\xmapsto{R_i}
(\ldots,\phi_{i+1},\phi_{i+1}^{-1}\circ\phi_i\circ\phi_{i+1},\ldots).
\end{equation*}
Thereby, the existence of a fundamental polygon $D$ as in Theorem \ref{theorem::quasi-isometry-fundamental-domain} such that $\left.\widetilde{F}\right|_{D}$ is a quasi-isometric embedding is equivalent to
the existence of a tuple being Hurwitz equivalent to $(\phi_1,\ldots,\phi_n)$ whose components are of infinite order and pairwise non-disjointed, as in Theorem \ref{thmx::quasi-isometry-fundamental-domain-n=0}.

Theorem \ref{theorem::quasi-isometry-fundamental-domain} comes from the following lemma.

\begin{lemma}
Given $\epsilon>0$,
there exists a constant $K_5=K_5(g,n,h,\epsilon)$, depending only on $(g,n)$, $h$ and $\epsilon$, such that the following holds:
Let $B$ be an oriented hyperbolic surface of type $(g,n)$ with $\sys(B)\ge \epsilon$
and
let $D\subset \Hyperbolic^2$ be a convex fundamental polygon of $B$ with exactly $n$ ideal points.
Let $F:B\rightarrow \mathcal{M}_h$ be a non-constant holomorphic map with a monodromy homomorphism $F_*\in \Hom(\pi_1(B,t), \Mod_h)$.
For some distinct $i\neq j$, where $i,j \in \{1,\ldots,n\}$,
if peripheral monodromies of the $i$-th and the $j$-th cusps are of infinite order and are not disjointed along some geodesic segment $\kappa_{i,j}\subset B$ with a lift $\widetilde{\kappa_{i,j}}\subset D$,
then for each pair of points $(p_1,p_2)\in U_i\times U_j$,
where $\widetilde{p_1}\in D$ and $\widetilde{p_2}\in D$ are lifts of $p_1$ and $p_2$,
we have the inequality
\begin{equation*}
\frac{1}{2}d_{\Hyperbolic^2}(\widetilde{p_1},\widetilde{p_2})\ge
d_{\Teich}(\widetilde{F}(\widetilde{p_1}),\widetilde{F}(\widetilde{p_2}))\ge
\frac{1}{4}d_{\Hyperbolic^2}(\widetilde{p_1},\widetilde{p_2})-K_5-l_B(\kappa_{i,j}).
\end{equation*}
\end{lemma}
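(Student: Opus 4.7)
The upper bound $d_{\Teich}(\widetilde{F}(\widetilde{p_1}),\widetilde{F}(\widetilde{p_2}))\le \frac{1}{2}d_{\Hyperbolic^2}(\widetilde{p_1},\widetilde{p_2})$ is immediate from the Kobayashi distance-decreasing property, so the content is the lower bound. The plan is to exploit the not-disjointed hypothesis through a Collar Lemma / Wolpert Lemma sandwich. Let $\widetilde{U_i},\widetilde{U_j}\subset\Hyperbolic^2$ denote the horoballs at the two ideal vertices of $D$ corresponding to $U_i,U_j$, set $\widetilde{b_i}=\partial\widetilde{U_i}\cap\widetilde{\kappa_{i,j}}$ and $\widetilde{b_j}=\partial\widetilde{U_j}\cap\widetilde{\kappa_{i,j}}$, and write $a_\ell=d_{\Hyperbolic^2}(\widetilde{p_\ell},\partial\widetilde{U_\ell})$ for $\ell=1,2$. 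The not-disjointed hypothesis furnishes curves $\alpha\in\boldsymbol{\alpha}_i$ and $\beta\in\boldsymbol{\alpha}_j$ with $\Int(\alpha,\beta)\ge 1$, where $\boldsymbol{\alpha}_i,\boldsymbol{\alpha}_j$ are the multi-curves appearing in the multi-twist powers of the peripheral monodromies $\phi_i,\phi_j$ along $\widetilde{\kappa_{i,j}}$.

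Arguing as in the proof of Theorem \ref{theorem::cusp_region->end is quasi-isometric} --- bounding $d_{\Teich}(\widetilde{F}(\widetilde{p_\ell}),\phi_\ell^{\mu_\ell}\cdot\widetilde{F}(\widetilde{p_\ell}))$ by the length of the horocycle through $\widetilde{p_\ell}$ and invoking Proposition \ref{proposition::l<=K*epsilon} --- yields the cusp-side estimates $L_\alpha(\widetilde{F}(\widetilde{p_1}))\le C_1 e^{-a_1}$ and $L_\beta(\widetilde{F}(\widetilde{p_2}))\le C_1 e^{-a_2}$. Since $\alpha$ crosses $\beta$, the Collar Lemma applied on the hyperbolic surfaces $F(p_2)$ and $F(p_1)$ forces the cross-side estimates $L_\alpha(\widetilde{F}(\widetilde{p_2}))\ge 2a_2-C_2$ and $L_\beta(\widetilde{F}(\widetilde{p_1}))\ge 2a_1-C_2$, valid once $a_1,a_2$ are sufficiently large. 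Wolpert's lemma applied first to $\alpha$, then to $\beta$, then produces the two lower bounds
\[
2\,d_{\Teich}(\widetilde{F}(\widetilde{p_1}),\widetilde{F}(\widetilde{p_2}))\ge a_\ell+\log(2a_{3-\ell}-C_2)-\log C_1,\qquad \ell=1,2,
\]
whose average reads $d_{\Teich}(\widetilde{F}(\widetilde{p_1}),\widetilde{F}(\widetilde{p_2}))\ge (a_1+a_2)/4-C_3$. This averaging step is precisely the source of the factor $1/4$ in the statement, in contrast to the $1/2$ of Theorem \ref{thmx::quasi-isometry}.

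To pass from $a_1+a_2$ to $d_{\Hyperbolic^2}(\widetilde{p_1},\widetilde{p_2})$, note that the horocycle arc $\partial\widetilde{U_\ell}\cap D$ has length at most $2$ (it fits in one period of the parabolic stabilizer of $\widetilde{U_\ell}$, which projects to the length-$2$ horocycle $\partial U_\ell$ in $B$), so an elementary upper half plane computation gives $d_{\Hyperbolic^2}(\widetilde{p_\ell},\widetilde{b_\ell})\le a_\ell+C_4$ with $C_4$ universal. The triangle inequality along the broken path $\widetilde{p_1}\,\widetilde{b_i}\,\widetilde{b_j}\,\widetilde{p_2}$ then yields $d_{\Hyperbolic^2}(\widetilde{p_1},\widetilde{p_2})\le a_1+a_2+l_B(\kappa_{i,j})+2C_4$, and combining with the previous estimate proves the claim, in fact with the stronger error $-l_B(\kappa_{i,j})/4$; the bounded regime where one of $a_\ell$ is small is dispatched by a one-sided application of Theorem \ref{theorem::cusp_region->end is quasi-isometric} from the deep cusp, together with the distance-decreasing bound on the broken path. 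The main obstacle will be verifying that every constant $C_1,\ldots,C_4$ and the multi-twist power $\mu_\ell$ depends only on $(g,n,h,\epsilon)$: this reduces, via Mumford's compactness and Lemma \ref{lemma::bounded systole for compact part}, to a uniform lower bound on $\sys(F(\widetilde{b_\ell}))$, which in turn supplies the quantitative control needed by Proposition \ref{proposition::l<=K*epsilon} and the quantitative Collar Lemma.
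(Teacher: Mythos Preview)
Your proposal is correct and follows essentially the same route as the paper: bound $L_\alpha$ small at $\widetilde{p_1}$ and $L_\beta$ small at $\widetilde{p_2}$ via Proposition~\ref{proposition::l<=K*epsilon}, use the Collar Lemma (from $\Int(\alpha,\beta)\ge 1$) to force the cross-lengths large, apply Wolpert's Lemma to each curve, average the two resulting inequalities to obtain the factor $1/4$, and finish with a triangle-inequality conversion to $d_{\Hyperbolic^2}(\widetilde{p_1},\widetilde{p_2})$.

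The only noteworthy differences are cosmetic. The paper uses the \emph{constant} Collar bound $L_{\alpha}(\widetilde{q_2})\ge 2\arcsinh\bigl(1/\sinh(K_1\mu_2)\bigr)$, which is valid for all $\epsilon_1,\epsilon_2\in(0,2]$ and so dispenses with your separate ``bounded regime'' case; your sharper linear bound $L_\alpha(\widetilde{q_2})\ge 2a_2-C_2$ is correct but forces the extra case split (and the appeal to Lemma~\ref{lemma::bounded systole for compact part} via Theorem~\ref{theorem::cusp_region->end is quasi-isometric} there is indeed what is needed). Conversely, your broken-path conversion through $\widetilde{b_i},\widetilde{b_j}$ yields the error term $l_B(\kappa_{i,j})$ exactly as stated in the lemma, whereas the paper's conversion via $\diam(\widetilde{B_{cp}}\cap D)$ gives a coarser (but still adequate) bound.
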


\begin{proof}
Without loss of generality, we assume that $i=1$ and $j=2$.
Suppose that $p_1\in H_{\epsilon_1}\subset U_1$
where $H_{\epsilon_1}$ is the horocycle of length $0<\epsilon_1\le 2$ within the cusp region $U_1$.
Suppose that $p_2\in H_{\epsilon_2}\subset U_2$
where $H_{\epsilon_2}$ is the horocycle of length $0<\epsilon_2\le 2$ within the cusp region $U_2$.

Consider the peripheral monodromies $\phi_1$ and $\phi_2$ associated to $\kappa$.
Therefore, some power $\phi_1^{\mu_1}$ is a multi-twist along a multi-curve $\boldsymbol{\alpha}_1$ and some power $\phi_2^{\mu_2}$ is a multi-twist along a multi-curve $\boldsymbol{\alpha}_2$, where both $\mu_1$ and $\mu_2$ are bounded above by a constant determined by $h$.
There exist $\alpha_1\in\boldsymbol{\alpha}_1$ and $\alpha_2\in\boldsymbol{\alpha}_2$ such that $\Int(\alpha_1,\alpha_2)\ge 1$.

Take $q_1=F(p_1)$, $q_2=F(p_2)\in\mathcal{M}_h$ and
set $\widetilde{q_1}=\widetilde{F}(\widetilde{p_1})$, $\widetilde{q_2}=\widetilde{F}(\widetilde{p_2})\in\Teich_h$.
By Proposition \ref{proposition::l<=K*epsilon},
since
$d_{\Teich}(\widetilde{q_1},\phi_1^{\mu_1}\cdot \widetilde{q_1})
\le \mu_1 \cdot \epsilon_1
\le 2\mu_1$ and
$d_{\Teich}(\widetilde{q_2},\phi_2^{\mu_2}\cdot \widetilde{q_2})
\le \mu_2 \cdot \epsilon_2
\le 2\mu_2$
we have
\begin{align*}
L_{\alpha_1}(\widetilde{q_1})
\le K_1(h,\mu_1) \cdot d_{\Teich}(\widetilde{q_1},\phi_1^{\mu_1}\cdot \widetilde{q_1})
\le K_1(h,\mu_1)\cdot \mu_1 \cdot \epsilon_1,\\
L_{\alpha_2}(\widetilde{q_2})
\le K_1(h,\mu_2) \cdot d_{\Teich}(\widetilde{q_2},\phi_2^{\mu_2}\cdot \widetilde{q_2})
\le K_1(h,\mu_2)\cdot \mu_2 \cdot \epsilon_2.
\end{align*}
Besides, since $\Int(\alpha_1,\alpha_2)\ge 1$, we have
\begin{equation*}
\sinh{\Big(\frac{L_{\alpha_1}(\widetilde{q_1})}{2}\Big)}
\sinh{\Big(\frac{L_{\alpha_2}(\widetilde{q_1})}{2}\Big)}
\ge 1
\text{~~and~~}
\sinh{\Big(\frac{L_{\alpha_1}(\widetilde{q_2})}{2}\Big)}
\sinh{\Big(\frac{L_{\alpha_2}(\widetilde{q_2})}{2}\Big)}
\ge 1
\end{equation*}
which implies that
$L_{\alpha_1}(\widetilde{q_2})\ge 2\arcsinh{\frac{1}{\sinh{K_1(h,\mu_2)\cdot \mu_2}}}$
and
$L_{\alpha_2}(\widetilde{q_1})\ge 2\arcsinh{\frac{1}{\sinh{K_1(h,\mu_1)\cdot \mu_1}}}$.
Therefore, by Wolpert's Lemma \ref{lemma::wolpert}, we have
\begin{align*}
d_{\Hyperbolic^2}(\widetilde{p_1},\widetilde{p_2})
&\ge 2d_{\Teich}(\widetilde{F}(\widetilde{p_1}),\widetilde{F}(\widetilde{p_2}))
= 2d_{\Teich}(\widetilde{q_1},\widetilde{q_2})
\ge \frac{1}{2}
\log{\frac{L_{\alpha_1(\widetilde{q_2})}}{L_{\alpha_1(\widetilde{q_1})}}}
+ \frac{1}{2}
\log{\frac{L_{\alpha_2(\widetilde{q_1})}}{L_{\alpha_2(\widetilde{q_2})}}}\\
&\ge 
\frac{1}{2}\big(
\log{\frac{2}{\epsilon_1}}+\log{\frac{2}{\epsilon_2}}
\big)
-K_{5,1,2}(h,\mu_1,\mu_2)
-K_{5,2,1}(h,\mu_1,\mu_2)
\end{align*}
where
\begin{align*}
K_{5,1,2}(h,\mu_1,\mu_2)=
\frac{1}{2}
\log{
\frac{
K_1(h,\mu_1)\cdot \mu_1
}{
\arcsinh{
\frac{1}
{
\sinh{K_1(h,\mu_2)\cdot\mu_2}
}
}
}
},
\\
K_{5,2,1}(h,\mu_1,\mu_2)=
\frac{1}{2}
\log{
\frac{
K_1(h,\mu_2)\cdot \mu_2
}{
\arcsinh{
\frac{1}
{
\sinh{K_1(h,\mu_1)\cdot\mu_1}
}
}
}
}.
\end{align*}
Using triangle inequality in $(\Hyperbolic^2,d_{\Hyperbolic^2})$ and the fact that $D$ is convex and bounded by geodesic segments, we conclude that
\begin{align*}
d_{\Hyperbolic^2}(\widetilde{p_1},\widetilde{p_2})
&\ge 2d_{\Teich}(\widetilde{q_1},\widetilde{q_2})
\ge \frac{1}{2}d_{\Hyperbolic^2}(\widetilde{p_1},\widetilde{p_2})
- \frac{1}{2} \diam(\widetilde{B_{cp}}\cap D)
- K_{5,1,2}(h,\mu_1,\mu_2)
- K_{5,2,1}(h,\mu_1,\mu_2)
\end{align*}
where $\widetilde{B_{cp}}\subset \Hyperbolic^2$ is the lift of $B_{cp}\subset B$.
\end{proof}
\section{Examples and applications}
\label{section::example}

This section is intended to provide several examples, remarks concerning and consequences of Theorem \ref{thmx::quasi-isometry} and Theorem \ref{thmx::quasi-isometry-fundamental-domain-n=0}.
We focus on holomorphic curves in $\mathcal{M}_2$.
Let $\gamma_1,\gamma_2,\gamma_3,\gamma_4,\gamma_5$ and $\tau,\sigma$ be closed curves on $\Sigma_2$ represented in Figure \ref{figure::genus2-5Dehntwists} and Figure \ref{figure::genus2-2Dehntwists}.
It is well-known (cf. \cite[Theorem 4.8]{Birman1975} and \cite[Figure 1]{auroux2003genus2}) that $\Mod_2$ is generated by the five Dehn twists $T_{\gamma_1}, T_{\gamma_2}, T_{\gamma_3}, T_{\gamma_4}, T_{\gamma_5}$.
One can check that
$I = (T_{\gamma_1}\circ T_{\gamma_2}\circ T_{\gamma_3}\circ T_{\gamma_4})^5$.

\begin{figure}[htb]
\centering
\begin{minipage}{.45\textwidth}
  \centering
  \def\svgwidth{.5\textwidth}
\begingroup%
  \makeatletter%
  \providecommand\color[2][]{%
    \errmessage{(Inkscape) Color is used for the text in Inkscape, but the package 'color.sty' is not loaded}%
    \renewcommand\color[2][]{}%
  }%
  \providecommand\transparent[1]{%
    \errmessage{(Inkscape) Transparency is used (non-zero) for the text in Inkscape, but the package 'transparent.sty' is not loaded}%
    \renewcommand\transparent[1]{}%
  }%
  \providecommand\rotatebox[2]{#2}%
  \newcommand*\fsize{\dimexpr\f@size pt\relax}%
  \newcommand*\lineheight[1]{\fontsize{\fsize}{#1\fsize}\selectfont}%
  \ifx\svgwidth\undefined%
    \setlength{\unitlength}{566.92913386bp}%
    \ifx\svgscale\undefined%
      \relax%
    \else%
      \setlength{\unitlength}{\unitlength * \real{\svgscale}}%
    \fi%
  \else%
    \setlength{\unitlength}{\svgwidth}%
  \fi%
  \global\let\svgwidth\undefined%
  \global\let\svgscale\undefined%
  \makeatother%
  \begin{picture}(1,1.5)%
    \lineheight{1}%
    \setlength\tabcolsep{0pt}%
    \put(0,0){\includegraphics[width=\unitlength,page=1]{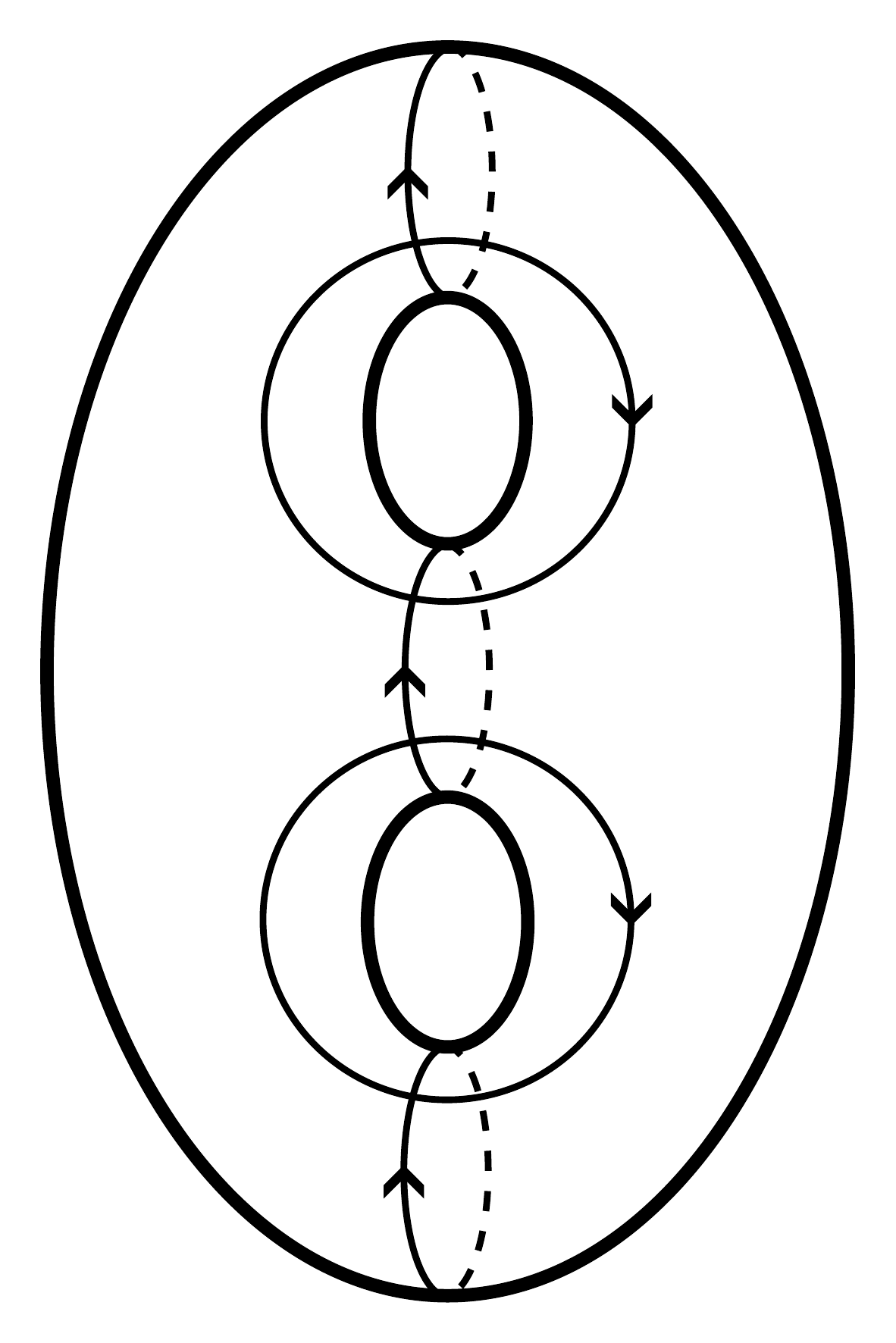}}%
    \put(0.33912133,1.30618634){\color[rgb]{0,0,0}\makebox(0,0)[lt]{\lineheight{1.25}\smash{\begin{tabular}[t]{l}$\gamma_1$\end{tabular}}}}%
    \put(0.18251001,1.02474879){\color[rgb]{0,0,0}\makebox(0,0)[lt]{\lineheight{1.25}\smash{\begin{tabular}[t]{l}$\gamma_2$\end{tabular}}}}%
    \put(0.32759882,0.73706686){\color[rgb]{0,0,0}\makebox(0,0)[lt]{\lineheight{1.25}\smash{\begin{tabular}[t]{l}$\gamma_3$\end{tabular}}}}%
    \put(0.1889703,0.44508915){\color[rgb]{0,0,0}\makebox(0,0)[lt]{\lineheight{1.25}\smash{\begin{tabular}[t]{l}$\gamma_4$\end{tabular}}}}%
    \put(0.32544264,0.16570674){\color[rgb]{0,0,0}\makebox(0,0)[lt]{\lineheight{1.25}\smash{\begin{tabular}[t]{l}$\gamma_5$\end{tabular}}}}%
  \end{picture}%
\endgroup%

  \caption{Five closed curves on $\Sigma_2$ along which the Dehn twists generate $\Mod_2$.
  We have chosen their orientations for later use.}
  \label{figure::genus2-5Dehntwists}
\end{minipage}%
\hfill
\begin{minipage}{.45\textwidth}
  \centering
  \def\svgwidth{.5\textwidth}
\begingroup%
  \makeatletter%
  \providecommand\color[2][]{%
    \errmessage{(Inkscape) Color is used for the text in Inkscape, but the package 'color.sty' is not loaded}%
    \renewcommand\color[2][]{}%
  }%
  \providecommand\transparent[1]{%
    \errmessage{(Inkscape) Transparency is used (non-zero) for the text in Inkscape, but the package 'transparent.sty' is not loaded}%
    \renewcommand\transparent[1]{}%
  }%
  \providecommand\rotatebox[2]{#2}%
  \newcommand*\fsize{\dimexpr\f@size pt\relax}%
  \newcommand*\lineheight[1]{\fontsize{\fsize}{#1\fsize}\selectfont}%
  \ifx\svgwidth\undefined%
    \setlength{\unitlength}{566.92913386bp}%
    \ifx\svgscale\undefined%
      \relax%
    \else%
      \setlength{\unitlength}{\unitlength * \real{\svgscale}}%
    \fi%
  \else%
    \setlength{\unitlength}{\svgwidth}%
  \fi%
  \global\let\svgwidth\undefined%
  \global\let\svgscale\undefined%
  \makeatother%
  \begin{picture}(1,1.5)%
    \lineheight{1}%
    \setlength\tabcolsep{0pt}%
    \put(0,0){\includegraphics[width=\unitlength,page=1]{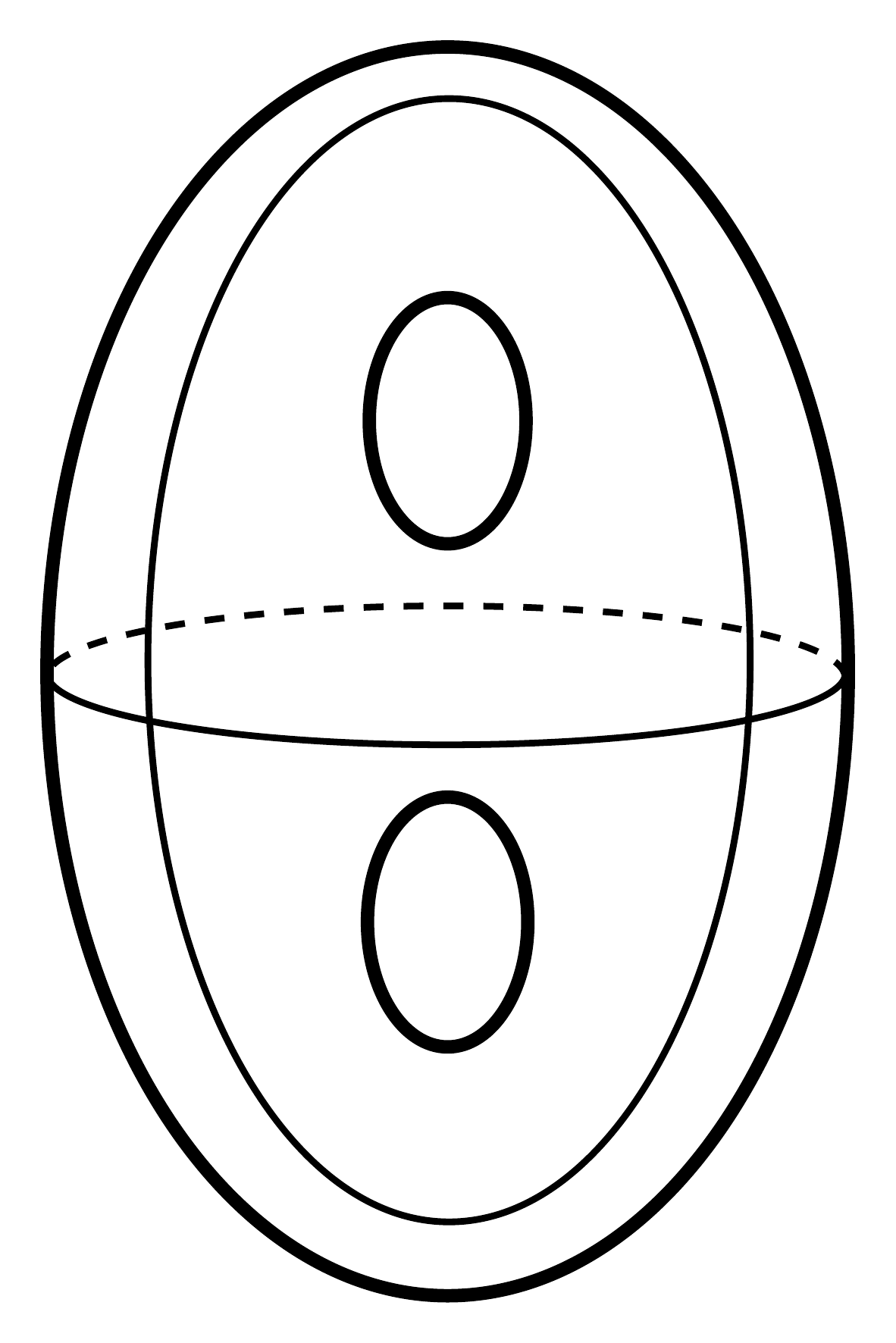}}%
    \put(0.47205299,0.70439407){\color[rgb]{0,0,0}\makebox(0,0)[lt]{\lineheight{1.25}\smash{\begin{tabular}[t]{l}$\sigma$\end{tabular}}}}%
    \put(0.47201294,1.30094854){\color[rgb]{0,0,0}\makebox(0,0)[lt]{\lineheight{1.25}\smash{\begin{tabular}[t]{l}$\tau$\end{tabular}}}}%
  \end{picture}%
\endgroup%

  \caption{Another pair of closed curves on $\Sigma_2$.}
  \label{figure::genus2-2Dehntwists}
\end{minipage}
\end{figure}

\subsection{Quasi-isometrically but non-isometrically immersed curves}

Theorem \ref{thmx::quasi-isometry} provides a sufficient condition on the monodromy homomorphism for a holomorphic map $F:B\rightarrow \mathcal{M}_h$ to be a quasi-isometric immersion.
On the other hand, the monodromy homomorphism of an isometric immersion $F:B\rightarrow \mathcal{M}_h$ is essentially purely pseudo-Anosov (see Definition \ref{definition::eppA} and Theorem \ref{theorem::Teichmuller curve is eppA}).
Therefore, we present a criterion for a holomorphic curve to be quasi-isometrically but not isometrically immersed.

\begin{criterion}
Let $B$ be an oriented hyperbolic surface of type $(g,n)$ and $F:B\rightarrow \mathcal{M}_h$ be a non-constant holomorphic map. Suppose that
(a) the monodromy homomorphism is not essentially purely pseudo-Anosov, 
(b) all peripheral monodromies are of infinite order.
Then, the holomorphic curve $F(B)\subset \mathcal{M}_h$ is quasi-isometrically but not isometrically immersed.
\end{criterion}

In the rest of this subsection, we construct a quasi-isometrically but not isometrically immersed holomorphic curve $F(B)\subset \mathcal{M}_2$ of type $(0,6)$.
In addition, there exists a desired fundamental polygon $D$ as in Theorem \ref{thmx::quasi-isometry-fundamental-domain-n=0} such that
$\left.\widetilde{F}\right|_{D}$ is a quasi-isometric embedding.

\begin{example}
\label{example::quasi-example}
Set $B=\C \setminus \{-2,-1,0,1,2\}$ and
\begin{equation*}
C'=\left\{
\big(
[X_0:X_1],[Y_0:Y_1],b
\big)
\in \C P^1 \times \C P^1\times B
\middle|
\begin{aligned}
X_0^6 Y_1^2 =&
(X_1+X_0 b)
(X_1-X_0 b)
(X_1 + X_0)\\
&(X_1 + 2X_0)
(X_1 - X_0)
(X_1 - 2X_0)
Y_0^2    
\end{aligned}
\right\}.
\end{equation*}
Let $\pi':C'\rightarrow B$ be a holomorphic map with $\pi'([X_0:X_1],[Y_0:Y_1],t)=t$.
Then, each fibre $\pi'^{-1}(b)$ is a double cover of $\C P^1$ via
$\pi'^{-1}(b) \ni ([X_0:X_1],[Y_0:Y_1],t)\mapsto [X_0:X_1]\in \C P^1$
with branch points
\begin{equation*}
P_1=[1:b], P_2=[1:2], P_3=[1:1], P_4=[1:-1], P_5=[1:-2], P_6=[1:-b]
\text{ and }
\infty=[0:1].
\end{equation*}
Therefore, the resolution at $[0:1]$ for every $t\in B$ is a holomorphic family $C/B$ of Riemann surfaces of genus $2$, say $\pi:C\rightarrow B$, which is non-isotrivial.
\end{example}

\begin{proposition}
\label{proposition::example1}
The classifying map of the holomorphic family $C/B$ in Example \ref{example::quasi-example} is a quasi-isometric but not-isometric immersion.
Moreover, the lift of the classifying map restricted to some fundamental polygon is a quasi-isometric embedding. 
\end{proposition}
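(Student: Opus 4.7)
The plan is to verify the hypotheses of Theorems \ref{thmx::quasi-isometry} and \ref{thmx::quasi-isometry-fundamental-domain} directly for this family. The base $B=\C\setminus\{-2,-1,0,1,2\}$ is the Riemann sphere with the six punctures $\{-2,-1,0,1,2,\infty\}$, hence a complete hyperbolic surface of type $(0,6)$; its systole is a fixed positive number, so the condition $\sys(B)\ge \epsilon$ is satisfied for any small enough $\epsilon$. Because the unordered configuration $\{\pm b,\pm 1,\pm 2\}$ of branch points depends non-trivially on $b$, the classifying map $F\colon B\to\mathcal{M}_2$ is non-constant.

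The first step is to identify the peripheral monodromies. As $b$ approaches a singular value $k\in\{-2,-1,0,1,2,\infty\}$ exactly two branch points collide: $P_1=[1:b]$ with $P_{i(k)}\in\{P_2,P_3,P_4,P_5\}$ when $k\in\{2,1,-1,-2\}$, and $P_1$ with $P_6=[1:-b]$ when $k\in\{0,\infty\}$. By the Picard--Lefschetz description of a nodal degeneration of a branched double cover, each peripheral monodromy is a single Dehn twist $T_{c_k}$, where $c_k\subset \Sigma_2$ is the vanishing cycle, realised as the simple closed curve double-covering a small embedded arc on $\C P^1$ joining the colliding pair. Dehn twists have infinite order, so Theorem \ref{thmx::quasi-isometry}~(\RNum{2}) shows that $F$ is a quasi-isometric immersion.

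For the non-isometric claim, by the Criterion preceding the example together with Theorem \ref{theorem::Teichmuller curve is eppA}, it is enough to exhibit a non-peripheral class whose image in $\Mod_2$ is not pseudo-Anosov. Let $\gamma_1, \gamma_2\subset B$ be peripheral loops around the cusps at $1$ and $2$; the class $[\gamma_1\cdot\gamma_2]\in\pi_1(B,t)$ is not conjugate to any peripheral loop of the six-punctured sphere, and its image is $T_{c_1}\cdot T_{c_2}$. Choosing the underlying arcs of $c_1, c_2$ to be disjoint apart from their common endpoint $P_1$ makes the two curves meet transversely at the unique preimage of $P_1$, so $\Int(c_1,c_2)=1$ on $\Sigma_2$. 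A regular neighbourhood of $c_1\cup c_2$ is then a one-holed torus whose boundary is preserved by $T_{c_1}\cdot T_{c_2}$; hence the product is reducible and $F$ cannot be isometric.

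For the fundamental polygon statement, I apply Theorem \ref{thmx::quasi-isometry-fundamental-domain} via its $(0,n)$ reformulation: it suffices to exhibit a tuple Hurwitz-equivalent to the global monodromy $(T_{c_{-2}},T_{c_{-1}},T_{c_0},T_{c_1},T_{c_2},T_{c_\infty})$ whose entries are of infinite order and pairwise non-disjointed. All entries are Dehn twists, and for any $j\neq k$ the curves $c_j, c_k$ share at least the preimage of $P_1$ (and additionally of $P_6$ when $\{j,k\}=\{0,\infty\}$), so they intersect essentially and are not isotopic; consequently no pair of the corresponding twists is disjointed. The original tuple therefore satisfies the hypothesis, and Theorem \ref{thmx::quasi-isometry-fundamental-domain} produces the desired quasi-isometric embedding of $\widetilde{F}$ on some fundamental polygon. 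The main technical point is the precise Dehn-twist description of the peripheral monodromies and the intersection pattern of the $c_k$; these are standard for branched double covers but must be checked to rule out accidental isotopies and to fix the correct ordering in the global monodromy tuple.
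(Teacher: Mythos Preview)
Your identification of the peripheral monodromies is incorrect, and this error propagates through all three parts of the argument. The branch locus of $C_b$ is $\{b,-b,1,-1,2,-2\}$, so both $P_1=b$ and $P_6=-b$ are moving. At $b\to 2$, the pair $(P_1,P_2)$ collides \emph{and simultaneously} the pair $(P_6,P_5)$ collides; likewise at $b\to 1,-1,-2$ two pairs collide. Moreover, at each of these values a moving branch point winds once around a fixed one, which in the braid group of the branch set is a \emph{full} twist $\sigma^2$, not a half-twist; under Birman--Hilden this lifts to a \emph{squared} Dehn twist. Hence the peripheral monodromies at $b=\pm1,\pm2$ are multi-twists of the form $T_{c}^{2}T_{c'}^{2}$ along two disjoint simple closed curves, not single Dehn twists. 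At $b=0$ only the pair $(P_1,P_6)$ collides, giving a single squared twist, and the monodromy at $\infty$ must be computed from the global relation; the paper finds it to be $T_\tau^{\pm 2}$.

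With the correct monodromies your specific arguments break down. For the non-isometric claim, the product of the two Dehn twists you write down is not the monodromy of any loop in $B$; the paper instead exhibits an explicit relation $(\phi_2\phi_1)^2=T_\sigma^{2}T_{\gamma_4}^{-4}T_{\gamma_2}^{-4}$ in $\Mod_2$, showing that the non-peripheral loop encircling the punctures at $1$ and $2$ twice has reducible (multi-twist) monodromy. For the fundamental-polygon claim, the assertion that ``the curves $c_j,c_k$ share the preimage of $P_1$'' no longer applies once each $\phi_k$ is a multi-twist along two curves, and one must check that for every pair $(\phi_j,\phi_k)$ some curve in the multi-curve of $\phi_j$ essentially intersects some curve in that of $\phi_k$; the paper does this by computing all the vanishing multi-curves explicitly in terms of $\gamma_1,\dots,\gamma_5,\tau$ and verifying pairwise intersection. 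Your overall strategy (verify the hypotheses of Theorems~\ref{thmx::quasi-isometry} and~\ref{thmx::quasi-isometry-fundamental-domain} and invoke the Criterion) is exactly the paper's, but the monodromy computation needs to be redone from scratch.
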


\begin{proof}
The base $B$ is a Riemann surface of type $(0,6)$.
To illustrate the monodromy homomorphism, 
we fix the base point $t\coloneqq 3\in B$ and investigate generic fibres at
$b\in \Gamma\subset B$
where $\Gamma$ is shown in
Figure \ref{figure::generic-positions}.
In fact, the resolution of $\pi'^{-1}(t)$ is the union of two copies of a single-valued branch that are glued along the boundary, where the boundary consists of three connected components.
Figure \ref{figure::single-valued-branch}
shows a piecewise correspondence between the algebraic curve $\pi^{-1}(t)$ and the topological surface $\Sigma_2$.

\begin{figure}[htb]
\centering
\begin{minipage}{.3\textwidth}
  \centering
  \def\svgwidth{.9\textwidth}
\begingroup%
  \makeatletter%
  \providecommand\color[2][]{%
    \errmessage{(Inkscape) Color is used for the text in Inkscape, but the package 'color.sty' is not loaded}%
    \renewcommand\color[2][]{}%
  }%
  \providecommand\transparent[1]{%
    \errmessage{(Inkscape) Transparency is used (non-zero) for the text in Inkscape, but the package 'transparent.sty' is not loaded}%
    \renewcommand\transparent[1]{}%
  }%
  \providecommand\rotatebox[2]{#2}%
  \newcommand*\fsize{\dimexpr\f@size pt\relax}%
  \newcommand*\lineheight[1]{\fontsize{\fsize}{#1\fsize}\selectfont}%
  \ifx\svgwidth\undefined%
    \setlength{\unitlength}{992.12598425bp}%
    \ifx\svgscale\undefined%
      \relax%
    \else%
      \setlength{\unitlength}{\unitlength * \real{\svgscale}}%
    \fi%
  \else%
    \setlength{\unitlength}{\svgwidth}%
  \fi%
  \global\let\svgwidth\undefined%
  \global\let\svgscale\undefined%
  \makeatother%
  \begin{picture}(1,0.28571429)%
    \lineheight{1}%
    \setlength\tabcolsep{0pt}%
    \put(0,0){\includegraphics[width=\unitlength,page=1]{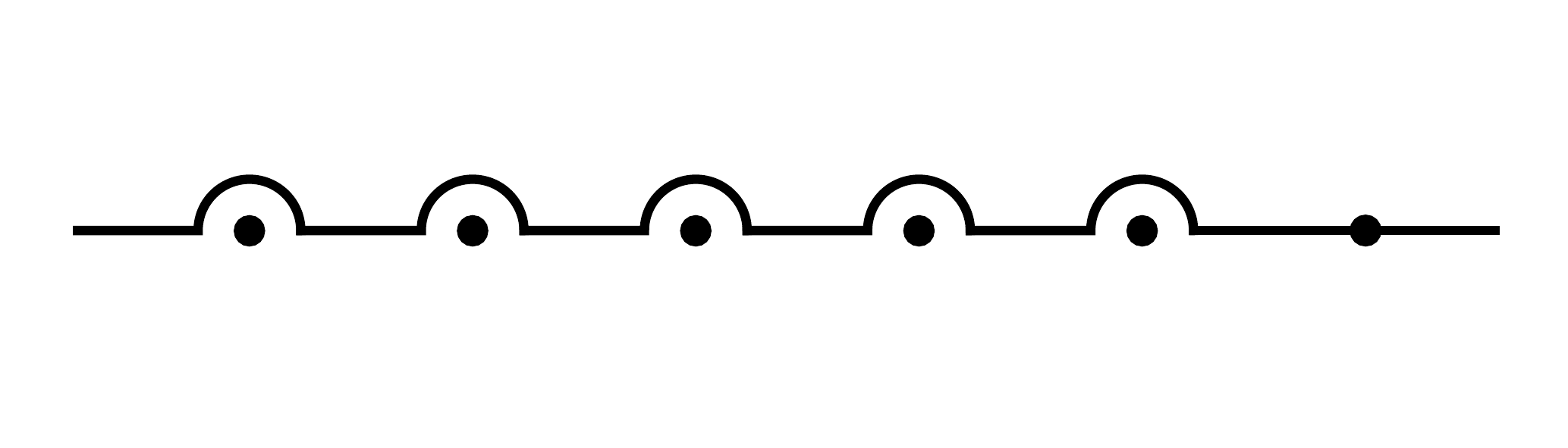}}%
    \put(0.12373876,0.06012466){\color[rgb]{0,0,0}\makebox(0,0)[lt]{\lineheight{1.25}\smash{\begin{tabular}[t]{l}$-2$\end{tabular}}}}%
    \put(0.26607627,0.06012466){\color[rgb]{0,0,0}\makebox(0,0)[lt]{\lineheight{1.25}\smash{\begin{tabular}[t]{l}$-1$\end{tabular}}}}%
    \put(0.42806859,0.0624784){\color[rgb]{0,0,0}\makebox(0,0)[lt]{\lineheight{1.25}\smash{\begin{tabular}[t]{l}$0$\end{tabular}}}}%
    \put(0.5704061,0.0624784){\color[rgb]{0,0,0}\makebox(0,0)[lt]{\lineheight{1.25}\smash{\begin{tabular}[t]{l}$1$\end{tabular}}}}%
    \put(0.71274362,0.06608407){\color[rgb]{0,0,0}\makebox(0,0)[lt]{\lineheight{1.25}\smash{\begin{tabular}[t]{l}$2$\end{tabular}}}}%
    \put(0.83542603,0.06970867){\color[rgb]{0,0,0}\makebox(0,0)[lt]{\lineheight{1.25}\smash{\begin{tabular}[t]{l}$t=3$\end{tabular}}}}%
    \put(0.75709935,0.22213488){\color[rgb]{0,0,0}\makebox(0,0)[lt]{\lineheight{1.25}\smash{\begin{tabular}[t]{l}$\Gamma\subset B$\end{tabular}}}}%
  \end{picture}%
\endgroup%

  \caption{The subset $\Gamma\subset B$ at each point of which the fibre is generic.}
  \label{figure::generic-positions}
\end{minipage}%
\begin{minipage}{.7\textwidth}
  \centering
  \def\svgwidth{.9\textwidth}
  \input{pdf_tex/single-valued-branch.pdf_tex}
  \caption{A piecewise correspondence between two copies of the single-valued branch at $t=3$ and $\Sigma_2$.}
  \label{figure::single-valued-branch}
\end{minipage}
\end{figure}

When $b\in \Gamma$ is approaching one of $-2$, $-1$, $0$, $1$ and $2$, there exist several closed curves on the generic fibre 
each joining two distinct branch points and vanishing when $b$ takes the limit.
These closed curves are called \emph{vanishing cycles}.
Figure \ref{figure::b-X-intersections}
tells us what the pair of branch points is for each vanishing cycle.
The peripheral monodromy is not the product of Dehn twists along vanishing cycles, but
the product of their squares.
One may compare its action on a transverse arc to the standard picture of a Dehn twist and a squared Dehn twist (see
Figure \ref{figure::branch_points_rotated}).

\begin{figure}[htb]
\centering
\begin{minipage}{.45\textwidth}
  \centering
  \def\svgwidth{1.0\textwidth}
  \input{pdf_tex/b-X-intersections.pdf_tex}
  \caption{Deformation of the branch points.}
  \label{figure::b-X-intersections}
\end{minipage}%
\hfill
\begin{minipage}{.45\textwidth}
  \centering
  \def\svgwidth{1.0\textwidth}
  \input{pdf_tex/branch_points_rotated.pdf_tex}
  \caption{A pair of branch points rotated clockwise and the squared Dehn twist.}
  \label{figure::branch_points_rotated}
\end{minipage}
\end{figure}

To identify the vanishing cycles, we deform the algebraic curve $\pi^{-1}(b)$ along $\Gamma$ and maintain the correspondence with $\Sigma_2$.
This deformation and all vanishing cycles at $-2$, $-1$, $0$, $1$, $2$ are depicted in Figure \ref{figure::peripheral-monodromy}.
We need the following:

\begin{figure}[htb]
\centering
\def\svgwidth{.9\textwidth}
\input{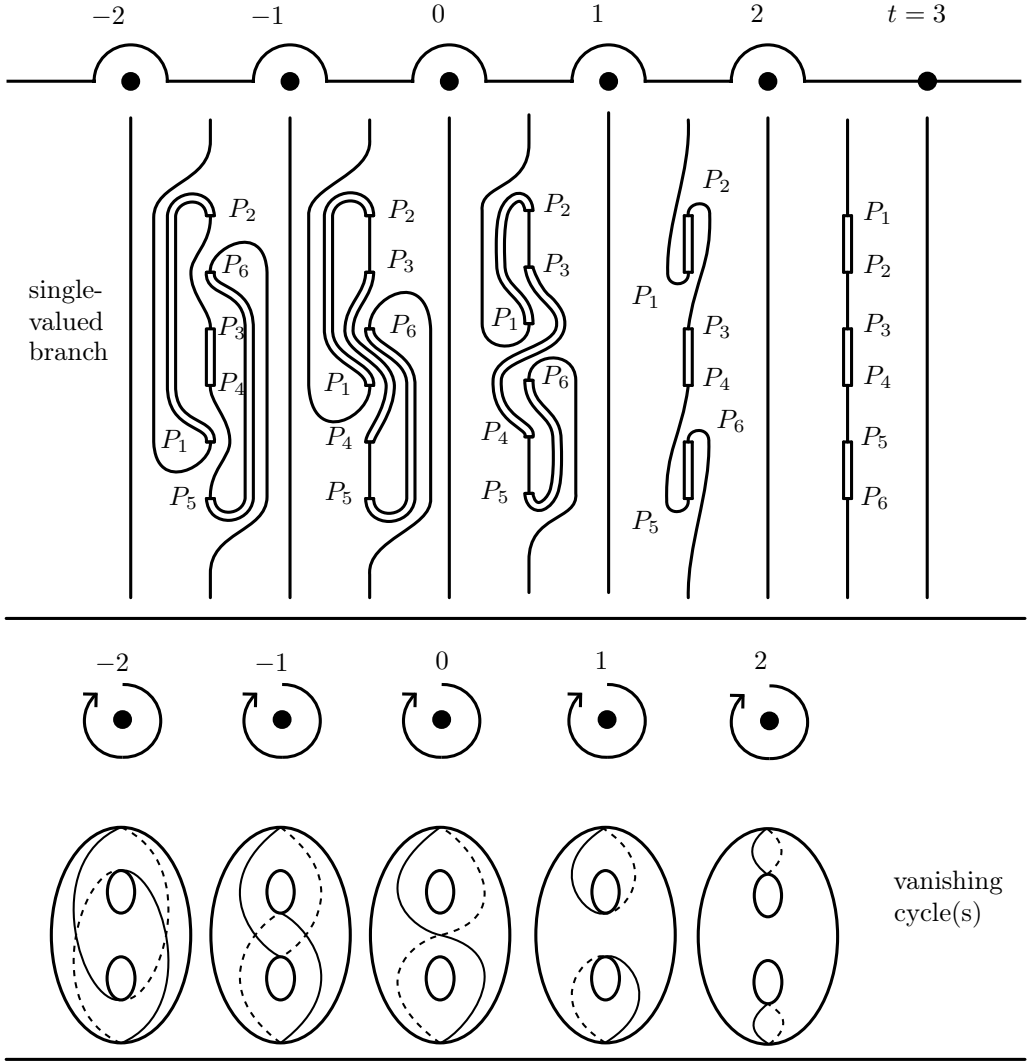}
\caption{Deformation of the generic fibre along $\Gamma$ is illustrated by the deformation of the single-valued branch, due to the correspondence between two copies of single-valued branch and $\Sigma_2$. One can further point out the vanishing cycle(s) at $2$, $1$, $0$, $-1$ and $-2$ respectively.}
\label{figure::peripheral-monodromy}
\end{figure}

\begin{lemma}
\label{lemma::triangle_operator_of_two_closed_curves}
Let $\gamma_1,\gamma_2\subset \Sigma_2$ be two closed curves on the (oriented) closed surface of genus $2$ such that $\Int(\gamma_1,\gamma_2)=1$.
Consider a path, denoted by $\gamma_1\triangle\gamma_2$, starting from some $p\in \gamma_1\setminus \gamma_2$, moving along $\gamma_1$ to the intersection,
turning right,
moving along $\gamma_2$ back to the intersection,
turning left and moving along $\gamma_1$ back to $p$.
Then, we have
\begin{equation*}
T_{\gamma_1\triangle\gamma_2}
=
T_{\gamma_2}^{-1}\circ T_{\gamma_1}\circ T_{\gamma_2}.
\end{equation*}
\end{lemma}

\begin{proof}[Proof of Lemma \ref{lemma::triangle_operator_of_two_closed_curves}]
This comes from the fact that $\gamma_1\triangle\gamma_2$
is homotopic to $T_{\gamma_2}^{-1}(\gamma_1)$.
\end{proof}

We return to the proof of Proposition \ref{proposition::example1}.
The monodromy homomorphism $F_*:\pi_1(B,t)\rightarrow\Mod_2$ is expressed as a sextuple,
denoted by $(\phi_{\infty},\phi_{-2},\phi_{-1},\phi_0,\phi_1,\phi_2)$,
where $\phi_{\infty}$, $\phi_{-2}$, $\phi_{-1}$, $\phi_0$, $\phi_1$ and $\phi_2$
are peripheral monodromies at $\infty$, $-2$, $-1$, $0$, $1$ and $2$.
Using $\phi_2^{1/2}$, $\phi_1^{1/2}$, $\phi_0^{1/2}$, $\phi_{-1}^{1/2}$ and $\phi_{-2}^{1/2}$ to denote half peripheral monodromies at $2$, $1$, $0$, $-1$ and $-2$ respectively, so that
\begin{align*}
\phi_2^{1/2}=&T_{\gamma_1}\circ T_{\gamma_5},\\
\phi_1^{1/2}=&T_{\gamma_1\triangle\gamma_2}\circ T_{\gamma_5\triangle\gamma_4}
= T_{\gamma_2}^{-1}\circ T_{\gamma_1} \circ T_{\gamma_2}\circ
T_{\gamma_4}^{-1}\circ T_{\gamma_5}\circ T_{\gamma_4},\\
\phi_0^{1/2}=&
T_{(\gamma_1\triangle \gamma_2)\triangle (\gamma_5\triangle \gamma_4\triangle \gamma_3)}
\\
=&
T_{\gamma_3}^{-1}\circ T_{\gamma_4}^{-1}\circ
T_{\gamma_5}^{-1}\circ T_{\gamma_4}\circ
T_{\gamma_3}\circ
T_{\gamma_2}^{-1} \circ T_{\gamma_1}\circ
T_{\gamma_2}\circ
T_{\gamma_3}^{-1}\circ T_{\gamma_4}^{-1}\circ
T_{\gamma_5}\circ T_{\gamma_4}\circ T_{\gamma_3},\\
\phi_{-1}^{1/2}=&
T_{\gamma_1\triangle\gamma_2\triangle\gamma_3}
\circ
T_{\gamma_5\triangle\gamma_4\triangle\gamma_3}
=
T_{\gamma_3}^{-1}\circ T_{\gamma_2}^{-1}\circ
T_{\gamma_1}\circ T_{\gamma_2}\circ T_{\gamma_3}
\circ
T_{\gamma_3}^{-1}\circ T_{\gamma_4}^{-1}\circ
T_{\gamma_5}\circ T_{\gamma_4}\circ T_{\gamma_3},
\\
\phi_{-2}^{1/2}=&
T_{\gamma_1\triangle\gamma_2\triangle\gamma_3\triangle\gamma_4}\circ
T_{\gamma_4\triangle\gamma_3\triangle\gamma_2\triangle\gamma_1}
\\
=&
T_{\gamma_4}^{-1}\circ T_{\gamma_3}^{-1}\circ
T_{\gamma_2}^{-1}\circ T_{\gamma_1}\circ
T_{\gamma_2}\circ T_{\gamma_3}\circ
T_{\gamma_4}\circ
T_{\gamma_2}^{-1}\circ T_{\gamma_3}^{-1}\circ
T_{\gamma_4}^{-1}\circ T_{\gamma_5}\circ
T_{\gamma_4}\circ T_{\gamma_3}\circ
T_{\gamma_2},
\end{align*}
we observe that
\begin{equation*}
T_{\tau}^2
\circ (\phi_{-2}^{1/2})^2
\circ (\phi_{-1}^{1/2})^2
\circ (\phi_0^{1/2})^2
\circ (\phi_1^{1/2})^2
\circ (\phi_2^{1/2})^2 = 1
\end{equation*}
and
\begin{equation*}
(\phi_2^{1/2})^2\circ (\phi_1^{1/2})^2\circ
(\phi_2^{1/2})^2\circ (\phi_1^{1/2})^2\circ
T_{\gamma_2}^4\circ T_{\gamma_4}^4\circ
T_{\sigma}^{-2} = 1
\end{equation*}
where $\tau$ and $\sigma$ are given in
Figure \ref{figure::genus2-2Dehntwists}.
As $\phi_\infty \circ \phi_{-2} \circ \phi_{-1} \circ \phi_{0} \circ \phi_{1} \circ \phi_{-2} = 1$,
the first equation shows that
the peripheral monodromy at $\infty$ is $\phi_\infty = T_{\tau}^2$, which is again a multi-twist.
The second equation shows that the monodromy along the essential closed curve $\beta_2 \cdot \beta_1 \cdot \beta_2 \cdot \beta_1$ is the multi-twist 
$T_{\sigma}^{2} \circ T_{\gamma_4}^{-4} \circ T_{\gamma_2}^{-4}$,
where the monodromy along $\beta_1$ is $\phi_1$ and the monodromy along $\beta_2$ is $\phi_2$.
Hence, the holomorphic family $C/B$ induces a quasi-isometrically but not isometrically immersed holomorphic curve.

The global monodromy $(\phi_{\infty},\phi_{-2},\phi_{-1},\phi_0,\phi_1,\phi_2)$ is a tuple in $\Mod_2$ whose components are of infinite order and pairwise intersecting.
Hence, the lift of the classifying map restricted to some fundamental polygon,
say $\left.\widetilde{F}\right|_{D}:D\rightarrow \Teich_2$,
is a quasi-isometric embedding.
\end{proof}

\subsection{Non quasi-isometrically embedded cusp regions}

In this subsection,
we provide a holomorphic curve of type $(0,8)$ in $\mathcal{M}_2$
for which a cusp region is not
quasi-isometrically embedded.
In fact, the corresponding peripheral monodromy is of finite order
and therefore this holomorphic curve does not satisfy the hypothesis of Theorem \ref{thmx::quasi-isometry} - (\RNum{1}).

\begin{example}
\label{example::non-quasi-cusp}
Set $B=\C \setminus \{-1,-1/2,-1/3,0,1/3,1/2,1\}$ and
\begin{equation*}
C=\left\{
\big(
[X_0:X_1],[Y_0:Y_1],b
\big)
\in \C P^1 \times \C P^1\times B
\middle|
\begin{aligned}
X_0^5 Y_1^2 =&
(X_1-(3b+2)X_0)
(X_1-bX_0)\\
&
(X_1+(3b-2)X_0)
(X_1+bX_0)\\
&
(X_1-X_0)Y_0^2
\end{aligned}
\right\}.
\end{equation*}
Let $\pi:C\rightarrow B$ be a holomorphic map with $\pi([X_0:X_1],[Y_0:Y_1],t)=t$.
Then, each fibre $\pi^{-1}(b)$ is a double cover of $\C P^1$ via
$\pi^{-1}(b) \ni ([X_0:X_1],[Y_0:Y_1],t)\mapsto [X_0:X_1]\in \C P^1$
with branch points
\begin{equation*}
P_1=[1:3b+2],
P_2=[1:b],
P_3=[1:1],
P_4=[1:-b],
P_5=[1:-3b+2]
\text{ and }
\infty=[0:1].
\end{equation*}
Therefore, $C/B$ is a holomorphic family of Riemann surfaces of genus $2$, say $\pi:C\rightarrow B$, which is non-isotrivial.    
\end{example}

\begin{proposition}
The classifying map $F:B\rightarrow \mathcal{M}_2$ of the holomorphic family $C/B$ in Example \ref{example::non-quasi-cusp} satisfies the following properties:
\begin{itemize}[-]
\item Peripheral monodromies at $\infty$ are of order $2$.

\item The restriction of $F$ to the cusp region at $\infty$ lies in a thick part of $\mathcal{M}_2$.
\end{itemize}
\end{proposition}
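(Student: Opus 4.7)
The plan is to find coordinates in which the degeneration of $C_b$ as $b\to\infty$ becomes transparent, and then read off the monodromy directly from the coordinate change itself. I will substitute $x = bt$ and $y = b^{5/2}s$ in the defining equation $y^2 = (x-3b-2)(x-b)(x+3b-2)(x+b)(x-1)$; the factor of $b^{5}$ arising from the five linear terms cancels, leaving
\begin{equation*}
s^2 = \bigl(t - 3 - \tfrac{2}{b}\bigr)(t-1)\bigl(t + 3 - \tfrac{2}{b}\bigr)(t+1)\bigl(t - \tfrac{1}{b}\bigr).
\end{equation*}
The six branch points in the $t$-coordinate, namely $\{3+2/b,\, 1,\, -3+2/b,\, -1,\, 1/b,\, \infty\}$, depend on $b$ only through $1/b$ and tend to the distinct points $\{-3,-1,0,1,3,\infty\}$ as $b\to\infty$; hence the limit equation defines a smooth hyperelliptic curve $C_\infty$ of genus $2$, and $[C_b]\to[C_\infty]$ in $\mathcal{M}_2$.

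To compute the peripheral monodromy at $\infty$, I will consider the loop $b(\theta) = Re^{i\theta}$ with $R$ large and $\theta:0\to 2\pi$. In the normalized $(t,s)$-trivialization the family depends only on $1/b = e^{-i\theta}/R$, which traces a small clockwise loop around $0$; each of the three moving branch points $3+2/b$, $-3+2/b$, $1/b$ sweeps out a tiny loop about its limit that does not encircle any other branch point. Hence the resulting loop in $\mathrm{Conf}_6(\C P^1)$ is contractible, and the monodromy in $(t,s)$ is the identity. However, the coordinate change $y = b^{5/2}s$ is multivalued: under $b\mapsto b e^{2\pi i}$ we have $b^{5/2}\mapsto b^{5/2}e^{5\pi i} = -b^{5/2}$. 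Transporting the identity in the $(t,s)$-trivialization back to the $(x,y)$-fibre therefore produces the automorphism $(x,y)\mapsto (x,-y)$ of $C_{b(0)}$, which is the unique nontrivial deck transformation of the hyperelliptic cover $C_b\to \C P^1$, namely the hyperelliptic involution $\iota\in\Mod_2$. Since $\iota^2=1$ and $\iota\ne 1$, the peripheral monodromy at $\infty$ has order exactly $2$.

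For the second assertion: $C_\infty$ is smooth, so $\sys([C_\infty])>0$. Combined with the convergence $F(b)\to[C_\infty]$ and the continuity of the systole function on $\mathcal{M}_2$, this gives a neighbourhood $V\subset B$ of $\infty$ with $F(V)\subset \mathcal{M}_2^{\ge\delta}$ for some $\delta>0$. The remainder $U_\infty\setminus V$ is a compact subset of $B$, so its image is compact in $\mathcal{M}_2$ and hence contained in a thick part. Therefore $F(U_\infty)$ lies in a single thick part of $\mathcal{M}_2$, as claimed.

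The main obstacle will be bookkeeping the half-integer power $b^{5/2}$ and verifying that the resulting $y\mapsto -y$ is genuinely the mapping class $\iota$ rather than a trivial rescaling. This is settled by recalling that $(x,y)\mapsto(x,-y)$ is the standard formula for the hyperelliptic involution on any affine hyperelliptic model, and that $\iota$ is central in $\Mod_2$, so the answer is independent of the chosen trivialization. A secondary check, that each of the three branch-point loops is individually contractible in the complement of the other five, holds for $R\gg 1$ because each has radius $O(1/R)$ while the six limit positions are pairwise $O(1)$ apart.
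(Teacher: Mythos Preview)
Your proof is correct and takes a genuinely different, more conceptual route than the paper.

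For the monodromy at $\infty$: the paper computes the half-monodromies $\phi_{-1}^{1/2},\ldots,\phi_{1}^{1/2}$ at all seven finite punctures by tracking vanishing cycles along the path $\Gamma$ (as in Figure~\ref{figure::peripheral-monodromy}), writes each as an explicit word in the Dehn twists $T_{\gamma_1},\ldots,T_{\gamma_5}$, and then verifies that the product $(\phi_{-1}^{1/2})^2\circ\cdots\circ(\phi_{1}^{1/2})^2$ has order~$2$. You instead rescale to $(t,s)=(x/b,\,y/b^{5/2})$, observe that in these coordinates the family depends holomorphically on $1/b$ and extends smoothly across $1/b=0$, so the loop around $b=\infty$ is contractible in the $(t,s)$-family and the only contribution is the sign picked up by $b^{5/2}$ under $b\mapsto be^{2\pi i}$. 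This directly identifies $\phi_\infty$ as the hyperelliptic involution $I\in\Mod_2$, which is stronger than merely establishing order~$2$.

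For the thick-part statement: the paper works on the real ray $b\in\R_{\ge N}$ and explicitly bounds the Kobayashi lengths of representatives $\beta_2,\beta_3,\beta_4,\beta_5$ of $f_b(\gamma_2),\ldots,f_b(\gamma_5)$, using that the ratios of distances between branch points stabilise. Your rescaling shows at once that $[C_b]\to[C_\infty]$ with $C_\infty$ smooth, so $F$ extends continuously over the puncture and the compactness of $\overline{U_\infty}$ in $B\cup\{\infty\}$ gives the conclusion immediately.

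What each approach buys: yours is shorter and yields the precise value $\phi_\infty=I$; the paper's route, while heavier, produces the full global monodromy octuple as explicit Dehn-twist words, which is the kind of data needed for the computations in Subsection~\ref{subsection::genus2Lef}.
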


\begin{proof}
The base $B$ is a Riemann surface of type $(0,8)$.
To illustrate the monodromy homomorphism, we fix the base point $t\coloneqq 2\in B$ and investigate generic fibres at $b\in\Gamma\subset B$ where $\Gamma$ is given in Figure \ref{figure::example2-generic-positions}.

\begin{figure}[htb]
\centering
\def\svgwidth{.9\textwidth}
\input{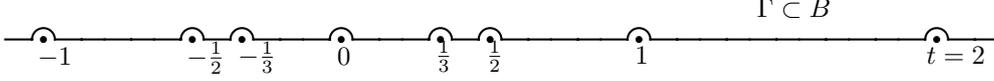}
\caption{The subset $\Gamma\subset B$ of generic positions we are looking at.}
\label{figure::example2-generic-positions}
\end{figure}

Figure \ref{figure::example2-single-valued-branch} shows a piecewise correspondence between $C_t\coloneqq\pi^{-1}(t)$ and the topological surface $\Sigma_2$.
Therefore, the monodromy homomorphism $F_*:\pi_1(B,t)\rightarrow \Mod_2$ is expressed as an octuple, denoted by
$(\phi_{\infty}, \phi_{-1}, \phi_{-1/2}, \phi_{-1/3}, \phi_0, \phi_{1/3}, \phi_{1/2}, \phi_1)$, where $\phi_{\infty}$, $\phi_{-1}$, $\phi_{-1/2}$, $\phi_{-1/3}$, $\phi_0$, $\phi_{1/3}$, $\phi_{1/2}$ and $\phi_1$ are peripheral monodromies at $\infty$, $-1$, $-1/2$, $-1/3$, $0$, $1/3$, $1/2$ and $1$.

\begin{figure}[htb]
\centering
\def\svgwidth{.63\textwidth}
\input{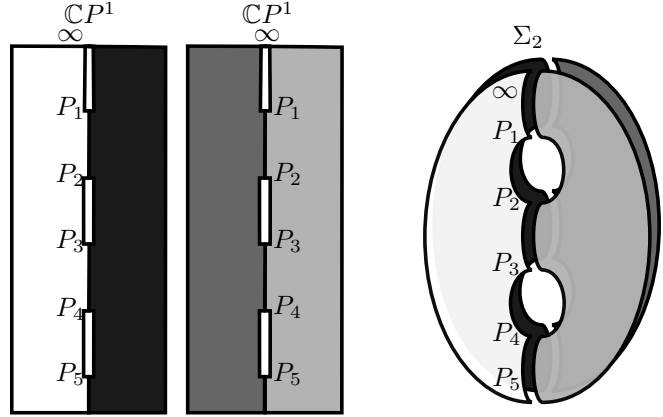}
\caption{A piecewise correspondence between $C_t$ and $\Sigma_2$.}
\label{figure::example2-single-valued-branch}
\end{figure}

The half peripheral monodromies are given by the following.
\begin{align*}
\phi_1^{1/2} &= T_{\gamma_3} \circ T_{\gamma_5},\\
\phi_{1/2}^{1/2} &=
T_{\gamma_3\triangle(\gamma_5\triangle\gamma_4)}
= T_{\gamma_4}^{-1} \circ T_{\gamma_5}^{-1} \circ T_{\gamma_4} \circ T_{\gamma_3}
\circ T_{\gamma_4}^{-1} \circ T_{\gamma_5} \circ T_{\gamma_4},\\
\phi_{1/3}^{1/3} &=
T_{\gamma_5\triangle \gamma_4}
= T_{\gamma_4}^{-1} \circ T_{\gamma_5} \circ T_{\gamma_4},\\
\phi_0^{1/2} &=
T_{\gamma_3\triangle\gamma_4} \circ T_{(\gamma_5\triangle\gamma_4)\triangle(\gamma_3\triangle\gamma_2)}\\
&= T_{\gamma_4}^{-1} \circ T_{\gamma_3} \circ T_{\gamma_4} \circ
T_{\gamma_2}^{-1} \circ T_{\gamma_3}^{-1} \circ T_{\gamma_2} \circ
T_{\gamma_4}^{-1} \circ T_{\gamma_5} \circ T_{\gamma_4} \circ
T_{\gamma_2}^{-1} \circ T_{\gamma_3} \circ T_{\gamma_2},\\
\phi_{-1/3}^{1/2} &=
T_{\gamma_3\triangle\gamma_2} = 
T_{\gamma_2}^{-1} \circ T_{\gamma_3} \circ T_{\gamma_2},\\
\phi_{-1/2}^{1/2} &=
T_{(\gamma_3\triangle\gamma_4)\triangle\gamma_2)} =
T_{\gamma_2}^{-1} \circ T_{\gamma_4}^{-1} \circ T_{\gamma_3} \circ T_{\gamma_4} \circ T_{\gamma_2},\\
\phi_{-1}^{1/2} &=
T_{\gamma_2} \circ T_{\gamma_4}.
\end{align*}
We compute that
$(\phi_{-1}^{1/2})^2 \circ
(\phi_{-1/2}^{1/2})^2 \circ
(\phi_{-1/3}^{1/2})^2 \circ
(\phi_{0}^{1/2})^2 \circ
(\phi_{1/3}^{1/2})^2 \circ
(\phi_{1/2}^{1/2})^2 \circ
(\phi_{1}^{1/2})^2$
is of order $2$.

\begin{remark}
By Theorem \ref{thmx::quasi-isometry} (or more directly, Proposition \ref{proposition::torsion-peripheral}), the finite order peripheral monodromy at $\infty$
forces $\left.F\right|_U$ to lie in a compact subset of $\mathcal{M}_2$.
The following provides an alternative computational verification.
\end{remark}

Fix an orientation preserving diffeomorphism $f_t:\Sigma_2\rightarrow C_t$ marking $C_t$ and endow each $C_b\coloneqq \pi^{-1}(b)$ with the marking $f_b$ along $\Gamma$, for $b\in \Gamma$.
We take a sufficiently large $N>0$.
To see that the restriction of $F$ to the cusp region at $\infty$ lies in a thick part of $\mathcal{M}_2$, it suffices to show that $\sys(C_b)$ is bounded away from $0$, for $b\in \R_{\ge N}$.
From now on, we consider only generic positions $b\in \R_{\ge N}$ and generic fibres $C_b$ at $b\in \R_{\ge N}$.

Half the hyperbolic distance on $C_b$ is equal to the Kobayashi distance on $C_b$.
Recall that the Kobayashi pseudo-norm on $T C_b$ is defined by $\Kob_{C_b}(x,v)=\inf_{\phi} \{1/c\}$
for $x\in C_b$ and $v\in T_x C_b$, where the infimum is taken over all holomorphic maps $\phi:\Delta \rightarrow C_b$ satisfying $\phi(0)=x$
and $(d\phi)_0 (\partial/\partial z)=c\cdot v$.
In order to obtain a very coarse estimation of $\Kob_{C_b}$,
we make the following restrictions on $\phi$: 
(\RNum{1}) $\phi(\Delta)$ lies in a single-valued branch, which is a subset of the affine chart $\C$;
(\RNum{2}) $\phi:\Delta\rightarrow\C$ is the composition of a linear map and a translation, meaning that $\phi(\Delta)\subset \C$ is also a disc away from $P_1,\ldots,P_5$.

The branch points $P_1,\ldots,P_5$ are colinear.
We observe that each ratio
\begin{equation*}
r_{i,j,k,l}(b)\coloneqq 
\frac{ d_{\R^2}(P_i,P_j) }{ d_{\R^2}(P_k,P_l) }
\end{equation*}
of euclidean distances converges as $b\rightarrow\infty$, for $i\neq j$ and $k\neq l$.
Set
\begin{equation*}
r_{\text{min}}\coloneqq (1/2)\min_{i,j,k,l}\{ \lim_{b\rightarrow\infty} r_{i,j,k,l}(b) \}
\text{~~~and~~~}
r_{\text{max}}\coloneqq (1/2)\max_{i,j,k,l}\{ \lim_{b\rightarrow\infty} r_{i,j,k,l}(b) \}.
\end{equation*}
Therefore, for each $i\in\{2,3,4,5\}$, there exists a closed curve $\beta_i$ lying in a single-valued branch and homotopic to $f_b(\gamma_i)$
such that
$l_{\R^2}(\beta_i)\le 12 r_{\text{max}}+\epsilon$
and
$d_{\R^2}(x,P_j)\ge r_{\text{min}}-\epsilon$ for all $x\in \beta_i$ and $j\in\{1,2,3,4,5\}$,
where $\epsilon>0$ is sufficiently small and determined by $N$ (see Figure \ref{figure::Kob}).

\begin{figure}[htb]
\centering
\def\svgwidth{.15\textwidth}
\input{pdf_tex/Kob_1.pdf_tex}
\hspace{1cm}
\def\svgwidth{.15\textwidth}
\begingroup%
  \makeatletter%
  \providecommand\color[2][]{%
    \errmessage{(Inkscape) Color is used for the text in Inkscape, but the package 'color.sty' is not loaded}%
    \renewcommand\color[2][]{}%
  }%
  \providecommand\transparent[1]{%
    \errmessage{(Inkscape) Transparency is used (non-zero) for the text in Inkscape, but the package 'transparent.sty' is not loaded}%
    \renewcommand\transparent[1]{}%
  }%
  \providecommand\rotatebox[2]{#2}%
  \newcommand*\fsize{\dimexpr\f@size pt\relax}%
  \newcommand*\lineheight[1]{\fontsize{\fsize}{#1\fsize}\selectfont}%
  \ifx\svgwidth\undefined%
    \setlength{\unitlength}{113.38582677bp}%
    \ifx\svgscale\undefined%
      \relax%
    \else%
      \setlength{\unitlength}{\unitlength * \real{\svgscale}}%
    \fi%
  \else%
    \setlength{\unitlength}{\svgwidth}%
  \fi%
  \global\let\svgwidth\undefined%
  \global\let\svgscale\undefined%
  \makeatother%
  \begin{picture}(1,4)%
    \lineheight{1}%
    \setlength\tabcolsep{0pt}%
    \put(0,0){\includegraphics[width=\unitlength,page=1]{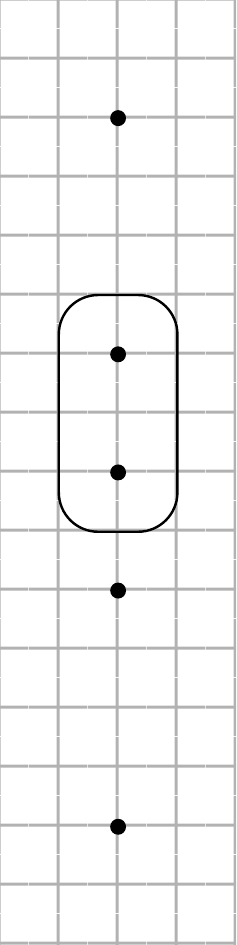}}%
    \put(0.55453305,3.41972213){\color[rgb]{0,0,0}\makebox(0,0)[lt]{\lineheight{1.25}\smash{\begin{tabular}[t]{l}$P_1$\end{tabular}}}}%
    \put(0.55453305,2.42284031){\color[rgb]{0,0,0}\makebox(0,0)[lt]{\lineheight{1.25}\smash{\begin{tabular}[t]{l}$P_2$\end{tabular}}}}%
    \put(0.55453305,1.41673355){\color[rgb]{0,0,0}\makebox(0,0)[lt]{\lineheight{1.25}\smash{\begin{tabular}[t]{l}$P_4$\end{tabular}}}}%
    \put(0.55453305,1.92479362){\color[rgb]{0,0,0}\makebox(0,0)[lt]{\lineheight{1.25}\smash{\begin{tabular}[t]{l}$P_3$\end{tabular}}}}%
    \put(0.55453305,0.42816353){\color[rgb]{0,0,0}\makebox(0,0)[lt]{\lineheight{1.25}\smash{\begin{tabular}[t]{l}$P_5$\end{tabular}}}}%
    \put(0.02086199,2.16494942){\color[rgb]{0,0,0}\makebox(0,0)[lt]{\lineheight{1.25}\smash{\begin{tabular}[t]{l}$\beta_3$\end{tabular}}}}%
    \put(0.75,3.75000005){\color[rgb]{0,0,0}\makebox(0,0)[lt]{\lineheight{1.25}\smash{\begin{tabular}[t]{l}$\C$\end{tabular}}}}%
  \end{picture}%
\endgroup%

\hspace{1cm}
\def\svgwidth{.15\textwidth}
\begingroup%
  \makeatletter%
  \providecommand\color[2][]{%
    \errmessage{(Inkscape) Color is used for the text in Inkscape, but the package 'color.sty' is not loaded}%
    \renewcommand\color[2][]{}%
  }%
  \providecommand\transparent[1]{%
    \errmessage{(Inkscape) Transparency is used (non-zero) for the text in Inkscape, but the package 'transparent.sty' is not loaded}%
    \renewcommand\transparent[1]{}%
  }%
  \providecommand\rotatebox[2]{#2}%
  \newcommand*\fsize{\dimexpr\f@size pt\relax}%
  \newcommand*\lineheight[1]{\fontsize{\fsize}{#1\fsize}\selectfont}%
  \ifx\svgwidth\undefined%
    \setlength{\unitlength}{113.38582677bp}%
    \ifx\svgscale\undefined%
      \relax%
    \else%
      \setlength{\unitlength}{\unitlength * \real{\svgscale}}%
    \fi%
  \else%
    \setlength{\unitlength}{\svgwidth}%
  \fi%
  \global\let\svgwidth\undefined%
  \global\let\svgscale\undefined%
  \makeatother%
  \begin{picture}(1,4)%
    \lineheight{1}%
    \setlength\tabcolsep{0pt}%
    \put(0,0){\includegraphics[width=\unitlength,page=1]{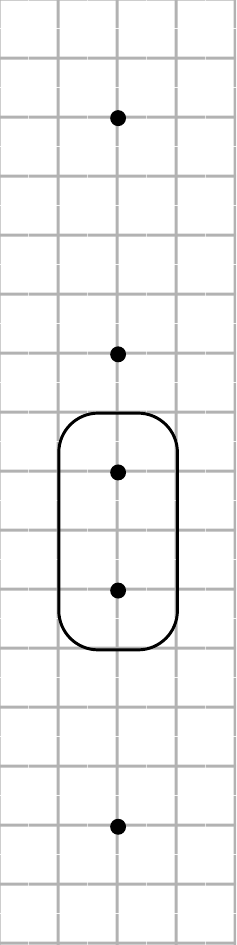}}%
    \put(0.55453305,3.41972213){\color[rgb]{0,0,0}\makebox(0,0)[lt]{\lineheight{1.25}\smash{\begin{tabular}[t]{l}$P_1$\end{tabular}}}}%
    \put(0.55453305,2.42284031){\color[rgb]{0,0,0}\makebox(0,0)[lt]{\lineheight{1.25}\smash{\begin{tabular}[t]{l}$P_2$\end{tabular}}}}%
    \put(0.55453305,1.41673355){\color[rgb]{0,0,0}\makebox(0,0)[lt]{\lineheight{1.25}\smash{\begin{tabular}[t]{l}$P_4$\end{tabular}}}}%
    \put(0.55453305,1.92479362){\color[rgb]{0,0,0}\makebox(0,0)[lt]{\lineheight{1.25}\smash{\begin{tabular}[t]{l}$P_3$\end{tabular}}}}%
    \put(0.55453305,0.42816353){\color[rgb]{0,0,0}\makebox(0,0)[lt]{\lineheight{1.25}\smash{\begin{tabular}[t]{l}$P_5$\end{tabular}}}}%
    \put(0.02086199,1.6622406){\color[rgb]{0,0,0}\makebox(0,0)[lt]{\lineheight{1.25}\smash{\begin{tabular}[t]{l}$\beta_4$\end{tabular}}}}%
    \put(0.75,3.75000005){\color[rgb]{0,0,0}\makebox(0,0)[lt]{\lineheight{1.25}\smash{\begin{tabular}[t]{l}$\C$\end{tabular}}}}%
  \end{picture}%
\endgroup%

\caption{Closed curves homotopic to $f_b(\gamma_2),\ldots,f_b(\gamma_5)$ with bounded Kobayashi lengths.}
\label{figure::Kob}
\end{figure}

Each of $\beta_2$, $\beta_3$, $\beta_4$ and $\beta_5$ has the hyperbolic length
\begin{align*}
l_{C_b}(\beta_i)
&= 2\cdot \int_0^1 \Kob_{C_b}(\beta_i(t),\dot{\beta}_i(t)) dt
\le 2\cdot \int_0^1 \frac{1}{\min_j\{d_{\R^2}(\beta_i(t),P_j)\} / |\dot{\beta}_i(t)|} dt
\\
&\le \frac{2}{r_{\min}-\epsilon} \int_0^1 |\dot{\beta}_i(t)| dt
= \frac{2 l_{\R^2}(\beta_i)}{r_{\min}-\epsilon}
\le \frac{2(12 r_{\text{max}}+\epsilon)}{r_{\text{min}}-\epsilon}.
\end{align*}
Hence $L_{\gamma_2}(C_b)$, 
$L_{\gamma_3}(C_b)$,
$L_{\gamma_4}(C_b)$ and
$L_{\gamma_5}(C_b)$
are uniformly bounded from above.
We conclude that $\sys(C_b)$ is bounded away from $0$.
\end{proof}

\subsection{Holomorphic genus-2 Lefschetz fibrations}
\label{subsection::genus2Lef}

Holomorphic genus-$2$ Lefschetz fibrations over a punctured sphere are quite well-understood by works of Siebert and Tian \cite{Siebert-Tian-2005-genus2} as well as Chakiris \cite{chakiris1983monodromy} and Smith \cite{Smith1999Lefschetz} (cf. also \cite{salepci2014lefschetz}).
In particular, there are only finitely many explicit possibilities for the global monodromies of genus-$2$ Lefschetz fibrations without reducible fibres (i.e. without separating vanishing cycles) up to Hurwitz moves.
By convention, we use $\bullet$ to denote the concatenation of tuples: $(\phi_1,\ldots,\phi_k)\bullet (\psi_1,\ldots,\psi_l) = (\phi_1,\ldots, \phi_k,\psi_1,\ldots,\psi_l)$.
The power of a tuple corresponds to a repeated concatenation with itself. The symbol $\prod$ represents
the concatenation of a family of tuples.

\begin{proposition}
\label{proposition::monodromy_genus2lef}
Given $n\ge 3$,
let $B$ be an oriented hyperbolic surface of type $(0,n)$
and
$F:B\rightarrow \mathcal{M}_2$ be a holomorphic map with a global monodromy $(\phi_1,\ldots,\phi_n)$.
Suppose each $\phi_i$ is the Dehn twist along a non-separating closed curve.
Then the global monodromy has the following properties:
\begin{enumerate}[(i)]
\item Using a finite sequence of Hurwitz moves, one can transform $(\phi_1,\ldots,\phi_n)$ into
the concatenation of tuples
$\mathcal{A}_1^p \bullet \mathcal{A}_2^q \bullet \mathcal{A}_3^r$
where $p,q,r$ are non-negative integers and
\begin{align*}
\mathcal{A}_1&=(T_{\gamma_1}, T_{\gamma_2}, T_{\gamma_3}, T_{\gamma_4}, T_{\gamma_5}, T_{\gamma_5}, T_{\gamma_4}, T_{\gamma_3}, T_{\gamma_2}, T_{\gamma_1})^2,\\
\mathcal{A}_2&=(T_{\gamma_1}, T_{\gamma_2}, T_{\gamma_3}, T_{\gamma_4})^5,~
\mathcal{A}_3=(T_{\gamma_1}, T_{\gamma_2}, T_{\gamma_3}, T_{\gamma_4}, T_{\gamma_5})^6.
\end{align*}

\item Using a finite sequence of Hurwitz moves, one can transform $(\phi_1,\ldots,\phi_n)$ into a tuple of Dehn twists along pairwise intersecting closed curves.
\end{enumerate}
\end{proposition}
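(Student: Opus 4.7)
The plan is to reduce Proposition~\ref{proposition::monodromy_genus2lef} to the classification of genus-$2$ holomorphic Lefschetz fibrations over $\mathbb{CP}^1$. First I would observe that, because each peripheral monodromy $\phi_i$ is a Dehn twist along a non-separating curve, one can fill in each cusp of $B$ by a Lefschetz singular fibre and extend the holomorphic family $C/B$ determined by $F$ to a holomorphic genus-$2$ Lefschetz fibration $\pi\colon X\to \overline{B}=\mathbb{CP}^1$ with exactly $n$ singular fibres, all of non-separating type. The relation $\phi_1\cdots\phi_n=1$ in $\pi_1(B,t)$ then expresses $(\phi_1,\ldots,\phi_n)$ as a positive factorization of $1\in\Mod_2$ by non-separating Dehn twists, and two such factorizations are Hurwitz equivalent precisely when the underlying Lefschetz fibrations are isomorphic.

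For part~(\RNum{1}) I would invoke the classification of monodromy factorizations of genus-$2$ Lefschetz fibrations without reducible fibres due to Chakiris~\cite{chakiris1983monodromy}, reproved and refined by Siebert--Tian~\cite{Siebert-Tian-2005-genus2} and Smith~\cite{Smith1999Lefschetz}: every positive factorization of $1$ by non-separating Dehn twists in $\Mod_2$ is Hurwitz equivalent to $\mathcal{A}_1^p\bullet \mathcal{A}_2^q\bullet \mathcal{A}_3^r$ for suitable non-negative integers $p,q,r$. The numerology is consistent: $\mathcal{A}_1$ multiplies to $I^2=1$, $\mathcal{A}_2$ multiplies to $I$ (forcing $q$ to be even for the total product to be $1$), and $\mathcal{A}_3$ multiplies to $1$.

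For part~(\RNum{2}) I would start from the normal form of part~(\RNum{1}) and construct a finite sequence of Hurwitz moves after which every pair of curves in the resulting tuple has non-zero geometric intersection. The key local identity is
\begin{equation*}
R_i\colon (\ldots,T_\alpha,T_\beta,\ldots) \mapsto (\ldots,T_\beta,T_{\tau_\beta^{-1}(\alpha)},\ldots),
\end{equation*}
so that if $(\alpha,\gamma)$ is a disjoint pair in the tuple separated by some $\beta$ with $\Int(\beta,\alpha)\ge 1$ and $\Int(\beta,\gamma)\ge 1$, conjugating $\alpha$ past $\beta$ replaces $\alpha$ by the non-separating curve $\tau_\beta^{-1}(\alpha)$, which intersects both $\beta$ and $\gamma$. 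Since each block $\mathcal{A}_j$ contains all five chain curves $\gamma_1,\ldots,\gamma_5$ with many repetitions, such mediating curves $\beta$ are always available.

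The main obstacle is global book-keeping: a local move that destroys one disjoint pair may re-introduce disjointness elsewhere. A workable strategy is induction on the length of the tuple, maintaining a growing left-prefix whose Dehn-twist curves pairwise intersect. At each step one uses Hurwitz moves to conjugate the next letter by suitable curves drawn from the processed prefix, producing a new non-separating representative that intersects every previous curve. This is possible because $\Mod_2$ is generated by Dehn twists along $\gamma_1,\ldots,\gamma_5$ already appearing in the tuple, and because intersection with a fixed curve is preserved, and can be strictly enlarged, by generic Dehn-twist conjugations. Verifying termination and the final pairwise-intersection property is the combinatorial core; the product relation $\phi_1\cdots\phi_n=1$ is automatically preserved under Hurwitz moves.
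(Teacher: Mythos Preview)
Your treatment of part~(\RNum{1}) matches the paper's: both simply cite Chakiris' classification theorem.

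For part~(\RNum{2}), however, your inductive strategy has a genuine gap, and the paper takes a substantially different route. The core problem with your approach is that Hurwitz moves do not let you conjugate a single entry by an arbitrary element of the prefix while leaving the prefix intact: the move $L_k$ on $(\ldots,T_{\delta_k},T_{\delta_{k+1}},\ldots)$ yields $(\ldots,T_{T_{\delta_k}(\delta_{k+1})},T_{\delta_k},\ldots)$, so the element you used to conjugate migrates out of its position. Maintaining a ``growing left-prefix whose Dehn-twist curves pairwise intersect'' is therefore not obviously stable under the moves you need. You acknowledge that ``verifying termination and the final pairwise-intersection property is the combinatorial core,'' but this is precisely the entire content of part~(\RNum{2}); leaving it unverified means the argument is only a heuristic. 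The vague claim that intersection ``can be strictly enlarged by generic Dehn-twist conjugations'' is also not enough: conjugating to force intersection with one curve can destroy intersection with another, and you give no mechanism to control this.

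The paper's proof avoids these difficulties by a two-layer strategy. First, for each of the three basic blocks $\mathcal{A}_1,\mathcal{A}_2,\mathcal{A}_3$ separately, an explicit (computer-found) sequence $q_i$ of Hurwitz moves is exhibited, and one verifies via the matrix of algebraic intersection numbers (Proposition~\ref{proposition::flat_of_Hurwitz_moves}) that after applying $q_i$ all curves in the block pairwise intersect \emph{and} all intersect $\gamma_1$. Second, because each block multiplies to a central element, different copies of a block can be simultaneously conjugated by arbitrary powers of $T_{\gamma_1}$ via Hurwitz moves; Ivanov's inequality (Lemma~\ref{lemma::ivanov}) then guarantees that for a sufficiently large power $N$, curves in blocks conjugated by $T_{\gamma_1}^{jN}$ and $T_{\gamma_1}^{j'N}$ with $j\neq j'$ also intersect. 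The explicit computer verification of Lemma~\ref{lemma::Hurwitz 1234554321} is what replaces the missing ``combinatorial core'' in your sketch, and the centrality trick is the key structural observation you are missing for handling multiple blocks.
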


Proposition \ref{proposition::monodromy_genus2lef} - (\RNum{1}) is Theorem B in \cite{chakiris1983monodromy}
and the holomorphic map $F:B\rightarrow \mathcal{M}_2$ is the classifying map of a holomorphic genus-$2$ Lefschetz fibration without separating vanishing cycles.
In this case, by Proposition \ref{proposition::monodromy_genus2lef} - (\RNum{2}), all hypotheses of Theorem \ref{thmx::quasi-isometry-fundamental-domain-n=0} hold for a specific fundamental polygon $D$
and we have the following corollary.

\begin{corollary}
Let $B=\Gamma\backslash \Hyperbolic^2$ be an oriented hyperbolic surface of type $(0,n)$, $n\ge 3$.
Let $F:B\rightarrow \mathcal{M}_2$ be the classifying map of a holomorphic genus-$2$ Lefschetz fibration without separating vanishing cycles.
Then, there exists a fundamental polygon $D$ of $B$ such that $\left.\widetilde{F}\right|_{D}: (D,(1/2)d_{\Hyperbolic^2}) \rightarrow (\Teich_2,d_{\Teich})$ is a $(2,K+\diam(D))$-quasi-isometric embedding, where $K=K(0,n,2,\sys(B))$ as in Theorem \ref{thmx::quasi-isometry-fundamental-domain-n=0}.
\end{corollary}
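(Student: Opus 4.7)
The plan is to reduce the statement to a direct application of Theorem \ref{thmx::quasi-isometry-fundamental-domain} by choosing a fundamental polygon $D$ adapted to a judicious Hurwitz representative of the global monodromy. First I would invoke Proposition \ref{proposition::monodromy_genus2lef} - (\RNum{2}): since $F$ classifies a holomorphic genus-$2$ Lefschetz fibration without separating vanishing cycles, every peripheral monodromy is a Dehn twist along a non-separating simple closed curve, so a finite sequence of Hurwitz moves transforms the global monodromy $(\phi_1,\ldots,\phi_n)$ into a tuple $(\psi_1,\ldots,\psi_n)$ of Dehn twists $\psi_i=T_{\beta_i}$ where the curves $\beta_i,\beta_j$ pairwise intersect for $i\neq j$.

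Next I would translate this purely algebraic statement on tuples into the geometric statement needed as the hypothesis of Theorem \ref{thmx::quasi-isometry-fundamental-domain}. As explained in the paragraph following the definition of disjointed peripheral monodromies, a choice of loops $\gamma_1,\ldots,\gamma_n$ realising the tuple $(\psi_1,\ldots,\psi_n)$ as the global monodromy corresponds to a fundamental convex polygon $D\subset\Hyperbolic^2$ of $B$ with exactly $n$ ideal points together with geodesic segments $\kappa_{i,j}\subset B$ having lifts $\widetilde{\kappa_{i,j}}\subset D$, such that the peripheral monodromies of $U_i$ and $U_j$ along $\kappa_{i,j}$ are disjointed if and only if $\psi_i$ and $\psi_j$ are disjointed as mapping classes. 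Since $\psi_i=T_{\beta_i}$ and $\psi_j=T_{\beta_j}$ with $\Int(\beta_i,\beta_j)\ge 1$, the multi-curves $\boldsymbol{\alpha_i}=\{\beta_i\}$ and $\boldsymbol{\alpha_j}=\{\beta_j\}$ associated to the unique multi-twist powers $\psi_i^1$, $\psi_j^1$ neither coincide nor are disjoint, so $\psi_i$ and $\psi_j$ are not disjointed. Consequently, peripheral monodromies of $U_i$ and $U_j$ are not disjointed along $\kappa_{i,j}$ for every $i\neq j$.

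Finally, I would check the remaining hypothesis: every $\psi_i$ is a single Dehn twist along a non-separating (hence essential) curve, so it is reducible of infinite order; in particular all peripheral monodromies are of infinite order, and $F$ is non-constant since the fibration is not isotrivial (the existence of vanishing cycles forces non-triviality). Both bullet points of Theorem \ref{thmx::quasi-isometry-fundamental-domain} then apply to $B$, $D$, and $F$ with parameter $\epsilon=\sys(B)$, yielding that $\left.\widetilde{F}\right|_{D}:(D,(1/2)d_{\Hyperbolic^2})\rightarrow(\Teich_2,d_{\Teich})$ is a $(2,K+\diam(D))$-quasi-isometric embedding with $K=K(0,n,2,\sys(B))$, as desired.

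The main obstacle, and the only non-mechanical point, is the dictionary between Hurwitz classes of $n$-tuples in $\Mod_h$ and fundamental convex polygons of $B$ together with joining geodesic segments $\kappa_{i,j}$. This is the content of the paragraph following Theorem \ref{thmx::quasi-isometry-fundamental-domain}, which I am taking as given; the remainder of the argument is essentially a bookkeeping exercise combining Chakiris' normal-form result in Proposition \ref{proposition::monodromy_genus2lef} with Theorem \ref{thmx::quasi-isometry-fundamental-domain}.
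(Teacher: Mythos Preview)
Your proposal is correct and follows essentially the same route as the paper: the corollary is deduced from Theorem \ref{thmx::quasi-isometry-fundamental-domain} by invoking Proposition \ref{proposition::monodromy_genus2lef} - (\RNum{2}) to obtain a Hurwitz-equivalent global monodromy whose components are Dehn twists along pairwise intersecting curves, and then using the dictionary between such tuples and fundamental polygons with segments $\kappa_{i,j}$. The paper states this in a single sentence before the corollary, and you have simply spelled out the details (infinite order of the $\psi_i$, non-disjointedness, non-constancy of $F$) in the expected way.
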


For positive integer $l$, recall that Hurwitz moves acting on 
a $l$-tuple $(\phi_1,\ldots,\phi_k)$ are given by
\begin{equation*}
(\ldots,\phi_{i}\circ\phi_{i+1}\circ\phi_i^{-1},\phi_i,\ldots)
\xmapsfrom{L_i}
(\ldots,\phi_i,\phi_{i+1},\ldots)
\xmapsto{R_i}
(\ldots,\phi_{i+1},\phi_{i+1}^{-1}\circ\phi_i\circ\phi_{i+1},\ldots).
\end{equation*}
We consider a closed curve $\delta\subset \Sigma_2$ and use $\vvec{\delta}$ to denote an orientation of $\delta$.
The \emph{algebraic intersection number} of two oriented closed curves 
$\vvec{\delta_1}$ and 
$\vvec{\delta_2}$,
denoted by $\hat{\Int}(\vvec{\delta_1},\vvec{\delta_2})$,
is defined as the sum of the indices of the intersection points of $\vvec{\delta_1}$ and $\vvec{\delta_2}$, where an intersection
point is of index $+1$ when the orientation of the intersection agrees with the
orientation of $\Sigma_{g,n}$ and is $-1$ otherwise.
Note that $\hat{\Int}(\vvec{\delta_1},\vvec{\delta_2})\neq 0$ only if $\Int(\delta_1,\delta_2)\neq 0$.

Let $\Omega_l$ be the set of $l$-tuples $(g_1,\ldots,g_l)$ where each $g_i$ is a positive Dehn twist in $\Mod_2$,
$\leftindex^{\sharp} \Omega_l$ be the set of $l$-tuples $(\vvec{\delta_1},\ldots,\vvec{\delta_l})$ where each $\vvec{\delta_i}$ is an orientation of some closed curve $\delta_i\subset \Sigma_2$.
There is a natural map $\natural: \leftindex^{\sharp}\Omega_l\rightarrow \Omega_l$ sending 
$(\vvec{\delta_1},\ldots,\vvec{\delta_l})$
to
$(T_{\delta_1},\ldots,T_{\delta_l})$.
We define the \emph{matrix of algebraic intersections} $\hat{M}=\hat{M}(\vvec{\delta_1},\ldots,\vvec{\delta_l}) \in \R^{l\times l}$ on every element in $\leftindex^{\sharp} \Omega_l$ by setting $\hat{M}_{i,j}=\hat{\Int}(\vvec{\delta_i},\vvec{\delta_j})$.

The maps $\leftindex^{\sharp} L_i$ and $\leftindex^{\sharp} R_i$ on $\leftindex^{\sharp} \Omega_l$
induced by the Hurwitz moves $L_i$ and $R_i$ are defined as follows.
\begin{equation*}
(\ldots, T_{\delta_i}(\vvec{\delta_{i+1}}), \vvec{\delta_i},\ldots)
\xmapsfrom{\leftindex^{\sharp} L_i}
(\ldots,\vvec{\delta_i},\vvec{\delta_{i+1}},\ldots)
\xmapsto{\leftindex^{\sharp} R_i}
(\ldots,\vvec{\delta_{i+1}},T_{\delta_{i+1}}^{-1}(\vvec{\delta_i}),\ldots).
\end{equation*}
We also have the maps
$\leftindex^{\flat} L_i$ and
$\leftindex^{\flat} R_i$
on $\R^{l\times l}$ defined by
\begin{equation*}
\hat{M}'=(m'_{j,k})
\xmapsfrom{\leftindex^{\flat} L_i}
\hat{M}=(m_{j,k}) 
\xmapsto{\leftindex^{\flat} R_i}
\hat{M}''=(m''_{j,k})
\end{equation*}
such that
$m'_{i,i}=m'_{i+1,i+1}=m''_{i,i}=m''_{i+1,i+1}=0$,
$m'_{i,i+1}=m_{i+1,i}=m''_{i,i+1}$,
$m'_{i+1,i}=m_{i,i+1}=m''_{i+1,i}$
and, for $j,k \not\in\{i,i+1\}$, that
$m'_{j,k}=m_{j,k}=m''_{j,k}$,
\begin{align*}
&m'_{i,k}=m_{i+1,k}+m_{i+1,i}m_{i,k},~
m'_{i+1,k}=m_{i,k}~,
m'_{j,i}=m_{j,i+1}-m_{i,i+1}m_{j,i},~
m'_{j,i+1}=m_{j,i},\\
&m''_{i,k}=m_{i+1,k},~
m''_{i+1,k}=m_{i,k}-m_{i,i+1}m_{i+1,k},~
m''_{j,i}=m_{j,i+1},~
m''_{j,i+1}=m_{j,i}+m_{i+1,i}m_{j,i+1}.
\end{align*}
Suppose that $q$ is a sequence of Hurwitz moves. We use $\leftindex^{\sharp}q$ to denote the sequence of corresponding maps on $\leftindex^{\sharp} \Omega_l$
and use $\leftindex^{\flat}q$ to denote the sequence of corresponding maps on $\R^{l\times l}$.

\begin{proposition}
\label{proposition::flat_of_Hurwitz_moves}
Let $(g_1,\ldots,g_l) \in \Omega_l$ be a tuple of positive Dehn twists in $\Mod_2$ where $g_i=T_{\delta_i}$ for $i=1,\ldots,l$.
Let $q$ be a sequence of Hurwitz moves.
Suppose that $(\vvec{\delta_1},\ldots,\vvec{\delta_l})\in \leftindex^{\sharp} \Omega_l$ is a lift of $(g_1,\ldots,g_l)$
and $(\vvec{\delta'_1},\ldots,\vvec{\delta'_l})\in \leftindex^{\sharp} \Omega_l$ is a lift of the resulting tuple $q\cdot (g_1,\ldots,g_l)$.
Then
\begin{equation*}
\leftindex^{\flat}q \cdot \hat{M} (\vvec{\delta_1},\ldots,\vvec{\delta_l}) =
\hat{M}(\vvec{\delta'_1},\ldots,\vvec{\delta'_l}).
\end{equation*}
\end{proposition}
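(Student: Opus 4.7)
The plan is to reduce to the case of a single Hurwitz move by induction on the length of $q$, and then to verify the transformation formula directly from the Picard--Lefschetz action of Dehn twists on $H_1(\Sigma_2;\Z)$. Concretely, if $q = q_2 \circ q_1$ and the claim is known for each of $q_1$ and $q_2$, then applying the identity first with $q_1$ to the given lift $(\vvec{\delta_1},\ldots,\vvec{\delta_l})$ and the canonical intermediate lift $\leftindex^{\sharp}q_1\cdot(\vvec{\delta_1},\ldots,\vvec{\delta_l})$, and then with $q_2$ to this intermediate lift, composes the matrix transformations $\leftindex^{\flat}q_2\circ\leftindex^{\flat}q_1$ and delivers the equality for $q$.

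For the base case $q=L_i$, the resulting tuple is $(T_{\delta_1},\ldots,T_{T_{\delta_i}(\delta_{i+1})},T_{\delta_i},\ldots,T_{\delta_l})$, and a natural lift is $\leftindex^{\sharp}L_i\cdot(\vvec{\delta_1},\ldots,\vvec{\delta_l})$, which places $T_{\delta_i}(\vvec{\delta_{i+1}})$ at position $i$ and $\vvec{\delta_i}$ at position $i+1$. The Picard--Lefschetz formula
\begin{equation*}
[T_{\delta_i}(\vvec{\delta_{i+1}})] = [\vvec{\delta_{i+1}}] + \hat{\Int}(\vvec{\delta_{i+1}},\vvec{\delta_i})\,[\vvec{\delta_i}] \quad \text{in } H_1(\Sigma_2;\Z),
\end{equation*}
combined with bilinearity and antisymmetry of $\hat{\Int}$, yields every entry of $\hat{M}\bigl(\leftindex^{\sharp}L_i\cdot(\vvec{\delta_1},\ldots,\vvec{\delta_l})\bigr)$ in precisely the form prescribed by the defining formulas for $\leftindex^{\flat}L_i$; for example, $m'_{i,k} = \hat{\Int}(T_{\delta_i}(\vvec{\delta_{i+1}}),\vvec{\delta_k}) = m_{i+1,k} + m_{i+1,i}\,m_{i,k}$, and the remaining entries indexed by $\{i,i+1\}$ and its complement are handled identically. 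The $R_i$ case is analogous, using $T_{\delta_{i+1}}^{-1}(\vvec{\delta_i}) = \vvec{\delta_i} + \hat{\Int}(\vvec{\delta_{i+1}},\vvec{\delta_i})\,\vvec{\delta_{i+1}}$.

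The last point is insensitivity to the choice of lifts on either side. Since $T_\delta = T_{-\delta}$, any lift of a tuple of positive Dehn twists differs from a canonical one by reversing orientations of some subset of components, and such a reversal flips the signs of the corresponding rows and columns of $\hat{M}$. Direct inspection of the defining formulas for $\leftindex^{\flat}L_i$ and $\leftindex^{\flat}R_i$ shows that both are equivariant with respect to this $\{\pm1\}^l$-action on matrices and on $\leftindex^{\sharp}\Omega_l$, so the equality extends from canonical lifts to arbitrary ones. The main obstacle is disciplined sign bookkeeping: once the Picard--Lefschetz convention is fixed and one tracks what happens to the two distinguished positions $i,i+1$ under reversal, the rest is a mechanical case analysis.
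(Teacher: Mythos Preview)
Your core argument—induction on the length of $q$, with the base case handled by the Picard–Lefschetz formula—is exactly the paper's approach, which is phrased there as commutativity of the diagram linking $\natural$, $\hat{M}$, and the three incarnations of each Hurwitz move.

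One caution on your final paragraph. The identity cannot literally hold for \emph{independent} choices of lifts on the two sides: fixing the input lift and reversing the orientation of a single output component leaves the left-hand side unchanged while negating a row and column of the right-hand side. What your equivariance observation actually establishes (correctly) is that for every input lift there is a uniquely determined output lift—namely $\leftindex^{\sharp}q\cdot(\vvec{\delta_1},\ldots,\vvec{\delta_l})$—for which the equality holds, and conversely every output lift arises this way since $\leftindex^{\sharp}L_i$ and $\leftindex^{\sharp}R_i$ act bijectively on the $\{\pm1\}^l$-fibres of $\natural$. The paper silently reads the proposition with this canonical pairing of lifts and does not discuss the ambiguity; your clause ``the equality extends from canonical lifts to arbitrary ones'' should be sharpened to say that the output lift is determined by, not independent of, the input lift. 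This is all that is needed downstream, where only the nonvanishing of off-diagonal entries—a sign-insensitive condition—is used.
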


\begin{proof}
It suffices to show that the following diagram is commutative.
\[\begin{tikzcd}
    {\Omega_l} & {\Omega_l} & {\Omega_l} \\
	{\leftindex^{\sharp} \Omega_l} & {\leftindex^{\sharp} \Omega_l} & {\leftindex^{\sharp} \Omega_l} \\
	{\R^{l\times l}} & {\R^{l\times l}} & {\R^{l\times l}}
	\arrow["{R_i}", from=1-2, to=1-3]
	\arrow["{\leftindex^{\sharp} R_i}", from=2-2, to=2-3]
	\arrow["{L_i}"', from=1-2, to=1-1]
	\arrow["{\leftindex^{\sharp} L_i}"', from=2-2, to=2-1]
	\arrow["{\leftindex^{\flat} R_i}", from=3-2, to=3-3]
	\arrow["{\leftindex^{\flat} L_i}"', from=3-2, to=3-1]
	\arrow["{\hat{M}}"', from=2-1, to=3-1]
	\arrow["{\hat{M}}"', from=2-2, to=3-2]
	\arrow["{\hat{M}}"', from=2-3, to=3-3]
	\arrow["\natural", from=2-1, to=1-1]
	\arrow["\natural", from=2-2, to=1-2]
	\arrow["\natural", from=2-3, to=1-3]
\end{tikzcd}\]
On the one hand, we have $T_{T_{\delta_i}(\delta_{i+1})}=T_{\delta_i}\circ T_{\delta_{i+1}}\circ T_{\delta_i}^{-1}$ and
$T_{T_{\delta_{i+1}}^{-1}(\delta_i)}=T_{\delta_{i+1}}^{-1}\circ T_{\delta_i} \circ T_{\delta_{i+1}}$ (see, e.g., \cite[Fact 3.7]{FarbMargalit2011}).
On the other hand, the algebraic intersection is well-defined on homology classes and we have
\begin{equation*}
[T_{\delta_i}(\vvec{\delta_{i+1}})] = [\vvec{\delta_{i+1}}] + \hat{\Int}(\vvec{\delta_{i+1}}, \vvec{\delta_{i}}) \cdot [\vvec{\delta_{i}}],
~~~
[T_{\delta_{i+1}}^{-1}(\vvec{\delta_i})] = [\vvec{\delta_i}] - \hat{\Int}(\vvec{\delta_i}, \vvec{\delta_{i+1}}) \cdot [\vvec{\delta_{i+1}}]
\end{equation*}
for $i=1,\ldots,l-1$ (see, e.g., \cite[Proposition 6.3]{FarbMargalit2011}).
\end{proof}

We prove Proposition \ref{proposition::monodromy_genus2lef} - (\RNum{2}) by starting with Lemma \ref{lemma::Hurwitz 1234554321} with a computer-assisted proof.

\begin{lemma}
\label{lemma::Hurwitz 1234554321}
For $i=1,2,3$,
there exists a finite sequence of Hurwitz moves $q_i$ such that the following holds:
Suppose that $\mathcal{A}_i$ is transformed by $q_i$ into a tuple of Dehn twists 
$(T_{\delta_{i,1}},\ldots,T_{\delta_{i,l_i}})$.
Then, for each pair of distinct curves in $\{\delta_{i,1},\ldots,\delta_{i,l_i}, \gamma_1\}$, the algebraic intersection is non-zero.
\end{lemma}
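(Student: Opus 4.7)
The plan is to leverage Proposition \ref{proposition::flat_of_Hurwitz_moves} to translate the claim into a purely combinatorial statement about integer matrices of algebraic intersection numbers, and then to exhibit concrete Hurwitz sequences that realise the desired outcome.

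First, I orient the chain curves so that $\hat{\Int}(\vvec{\gamma_i},\vvec{\gamma_{i+1}})=1$ for $i=1,\ldots,4$, while all other algebraic intersections among distinct $\gamma_j$'s vanish. Reading $\mathcal{A}_i$ position by position yields a canonical lift $(\vvec{\delta_{i,1}^0},\ldots,\vvec{\delta_{i,l_i}^0})\in \leftindex^{\sharp}\Omega_{l_i}$ together with an explicit initial matrix of algebraic intersections $\hat{M}_i^0$. To track the intersection with $\gamma_1$ it is convenient to append $\vvec{\gamma_1}$ in an auxiliary $(l_i+1)$-th slot and form $\hat{M}_i^{0,+}$, the $(l_i+1)\times(l_i+1)$ matrix of algebraic intersections among $(\vvec{\delta_{i,1}^0},\ldots,\vvec{\delta_{i,l_i}^0},\vvec{\gamma_1})$. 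A sequence $q_i$ of Hurwitz moves on $\mathcal{A}_i$ then acts on $\hat{M}_i^{0,+}$ by freezing the last coordinate, which plays the role of an index $j\notin\{i,i+1\}$ in the transformation rules of Proposition \ref{proposition::flat_of_Hurwitz_moves}. The lemma is thereby reduced to exhibiting $q_i$ for which the last column of $\leftindex^{\flat}q_i\cdot \hat{M}_i^{0,+}$ has no zero entry among its first $l_i$ coordinates.

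The construction of each $q_i$ proceeds by a propagation argument. The key local observation is that whenever two adjacent positions $j,j+1$ in the current tuple carry Dehn twists along curves $\vvec{\alpha},\vvec{\beta}$ with $\hat{\Int}(\vvec{\alpha},\vvec{\beta})=\pm 1$, then after the Hurwitz move $R_j$ the curve newly installed at position $j+1$ has algebraic intersection with $\vvec{\gamma_1}$ equal to $\hat{\Int}(\vvec{\alpha},\vvec{\gamma_1})\mp \hat{\Int}(\vvec{\beta},\vvec{\gamma_1})$; an analogous formula holds for $L_j$. Each of $\mathcal{A}_1,\mathcal{A}_2,\mathcal{A}_3$ contains many blocks of the form $(T_{\gamma_1},T_{\gamma_2},\ldots)$, so adjacent pairs $(\vvec{\gamma_1},\vvec{\gamma_2})$ or $(\vvec{\gamma_2},\vvec{\gamma_1})$ serve as seed positions providing non-zero intersections with $\vvec{\gamma_1}$. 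Iterating suitable Hurwitz moves spreads these non-zero intersections to neighbouring positions, and the cyclic repetition $\mathcal{A}_i=\mathcal{A}_{i,0}^{p_i}$ supplies sufficiently many disjoint seeds that the propagation never gets stuck at a $\gamma_1$-position (which must itself be replaced, since $\hat{\Int}(\vvec{\gamma_1},\vvec{\gamma_1})=0$).

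The main obstacle is therefore not conceptual but bookkeeping: one must display explicit sequences $q_1,q_2,q_3$ acting respectively on the $20$-, $20$- and $30$-tuples and verify, entry by entry, that the resulting vector of algebraic intersections with $\vvec{\gamma_1}$ is everywhere non-zero. This is a finite verification performable by hand using the formulas for $\leftindex^{\flat}L_i$ and $\leftindex^{\flat}R_i$, or confirmed by a short computer script; the conclusion of the lemma then follows.
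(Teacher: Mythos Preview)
You have misread the statement. The lemma requires that \emph{every} pair drawn from $\{\delta_{i,1},\ldots,\delta_{i,l_i},\gamma_1\}$ have non-zero algebraic intersection, not merely that each $\delta_{i,j}$ intersect $\gamma_1$. This is clear from how the lemma is used in the proof of Proposition~\ref{proposition::monodromy_genus2lef}~(ii), which needs both $\hat{\Int}(\gamma_1,\delta_{i,j})\neq 0$ and $\hat{\Int}(\delta_{i,j},\delta_{i,k})\neq 0$ for $j\neq k$, and from the paper's own proof, which checks that \emph{all} off-diagonal entries of $\leftindex^{\flat}q_i\cdot\hat{M}$ are non-zero. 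Your reduction to ``the last column of $\leftindex^{\flat}q_i\cdot \hat{M}_i^{0,+}$ has no zero entry'' is therefore too weak, and your propagation argument, which tracks only the single quantity $\hat{\Int}(\cdot,\vvec{\gamma_1})$, cannot by itself control the $\binom{l_i}{2}$ mutual intersections among the $\delta_{i,j}$.

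Even setting this aside, the proposal is a strategy rather than a proof: you do not exhibit any sequence $q_i$, and the heuristic that ``propagation never gets stuck'' is unsubstantiated---a Hurwitz move that creates a non-zero entry in one position can simultaneously annihilate an entry elsewhere, so a greedy propagation may well oscillate. The paper resolves this by displaying explicit sequences $q_1,q_2,q_3$ (of lengths $84$, $54$, $143$) and verifying by computer that the resulting $(l_i+1)\times(l_i+1)$ matrices have all off-diagonal entries non-zero. Your framework via Proposition~\ref{proposition::flat_of_Hurwitz_moves} and the appended $\gamma_1$-slot is exactly right; what is missing is the correct target condition and the concrete sequences that meet it.
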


\begin{proof}
We consider the sequence of Hurwitz moves $q_i$ below and show that the resulting $l_i$-tuple $(T_{\delta_{i,1}},\ldots,T_{\delta_{i,l_i}})=q_i\cdot \mathcal{A}_i$ satisfies the desired properties.
Applying $q_i$ to the $(l_i+1)$-tuple $\mathcal{A}_i\bullet\{T_{\gamma_1}\}$, we obtain a $(l_i+1)$-tuple of the form $(T_{\delta_{i,1}},\ldots,T_{\delta_{i,l_i}}, T_{\gamma_1})$
since each component in $q_i$ is neither $L_{l_i}$ nor $R_{l_i}$.
Suppose that
$(\vvec{\delta_1}, \ldots, \vvec{\delta_{l_i+1}}) \in \leftindex^{\sharp} \Omega_{l_i}$ is the lift of $\mathcal{A}_i\bullet\{T_{\gamma_1}\}$ where the orientations of $\vvec{\gamma_1},\ldots,\vvec{\gamma_5}$ are shown in Figure \ref{figure::genus2-5Dehntwists}.
Consider the matrix of algebraic intersections $\hat{M}=\hat{M}(\vvec{\delta_1}, \ldots, \vvec{\delta_{l_i+1}})$ and apply $\leftindex^{\flat} q_i$ to $\hat{M}$.
By Proposition \ref{proposition::flat_of_Hurwitz_moves}, it suffices to verify that $\leftindex^{\flat}q_i \cdot \hat{M}$ has non-zero off-diagonal entries.
\begin{align*}
q_1 = (&
R_{3},  L_{10}, L_{2},  R_{16}, L_{17}, L_{1},  R_{8},  R_{16}, R_{11}, R_{12}, R_{4},  L_{13}, L_{11}, L_{15}, R_{10}, R_{13}, L_{5},  R_{12}, R_{9},  L_{8},  \\&
R_{2},  L_{6},  L_{18}, L_{7},  R_{6},  R_{19}, R_{4},  L_{8},  R_{5},  L_{9},  L_{9},  L_{14}, L_{8},  L_{8},  R_{18}, L_{10}, R_{13}, L_{6},  L_{6},  R_{19}, \\&
R_{7},  R_{12}, L_{8},  R_{7},  L_{4},  L_{11}, R_{3},  R_{2},  R_{12}, R_{2},  R_{13}, L_{10}, L_{5},  L_{4},  L_{10}, L_{17}, L_{18}, R_{11}, L_{12}, L_{13}, \\&
R_{6},  R_{4},  L_{10}, L_{12}, L_{13}, L_{14}, R_{4},  R_{15}, R_{1},  L_{12}, L_{16}, L_{17}, R_{16}, L_{18}, L_{14}, L_{18}, R_{17}, R_{1},  L_{14}, L_{14}, \\&
R_{3},  L_{5},  L_{5}
),\\
q_2 = (&
R_{8},  R_{15}, L_{2},  L_{6},  R_{10}, L_{13}, R_{16}, R_{5},  L_{17}, L_{8},  R_{14}, R_{3},  R_{11}, L_{10}, R_{6},  L_{12}, R_{8},  L_{4},  L_{7},  L_{9},  \\&
R_{3},  R_{6},  R_{14}, L_{12}, R_{11}, L_{9},  R_{16}, L_{1},  L_{2},  L_{17}, R_{7},  R_{13}, L_{8},  R_{14}, R_{14}, R_{1},  L_{15}, L_{9},  L_{18}, L_{14}, \\&
R_{10}, R_{13}, L_{1},  R_{16}, L_{15}, R_{1},  R_{7},  R_{10}, R_{10}, L_{19}, R_{16}, R_{15}, R_{3}
),\\
q_3 = (&
L_{12}, R_{18}, L_{7},  R_{24}, L_{14}, R_{7},  L_{10}, L_{27}, L_{5},  R_{1},  R_{19}, R_{10}, L_{24}, R_{20}, L_{28}, L_{27}, L_{11}, L_{22}, L_{8},  L_{10}, \\&
R_{9},  R_{24}, R_{7},  R_{2},  R_{19}, L_{10}, R_{25}, R_{26}, L_{25}, L_{8},  R_{1},  R_{10}, R_{20}, L_{21}, L_{20}, L_{25}, L_{3},  L_{10}, R_{23}, L_{9},  \\&
L_{7},  R_{6},  R_{10}, L_{13}, L_{8},  L_{27}, R_{24}, L_{21}, R_{22}, R_{12}, R_{6},  L_{11}, L_{5},  R_{14}, R_{8},  L_{11}, L_{18}, R_{7},  R_{4},  L_{12}, \\&
R_{15}, L_{20}, R_{7},  L_{11}, L_{12}, R_{10}, L_{19}, L_{10}, R_{3},  R_{17}, L_{14}, L_{23}, L_{9},  L_{25}, R_{24}, R_{4},  R_{22}, L_{13}, L_{18}, R_{3},  \\&
L_{29}, L_{24}, R_{22}, R_{22}, L_{16}, L_{18}, R_{28}, L_{29}, L_{29}, L_{6},  R_{1},  L_{24}, L_{14}, R_{1},  L_{20}, R_{17}, L_{8},  R_{25}, R_{12}, R_{14}, \\&
R_{8},  L_{1},  L_{20}, L_{22}, R_{7},  L_{18}, R_{14}, L_{21}, R_{7},  R_{18}, R_{9},  R_{14}, L_{2},  R_{1},  L_{19}, R_{11}, R_{27}, R_{9},  R_{4},  L_{26}, \\&
R_{20}, L_{10}, L_{23}, L_{22}, R_{20}, R_{17}, R_{5},  R_{5},  L_{8}
).
\end{align*}
\end{proof}
In the proof of Lemma \ref{lemma::Hurwitz 1234554321} the matrix $\leftindex^{\flat}q_i\cdot \hat{M}$ is hard to obtain manually
but can be quickly solved by a computer.
We implement these computations in $\text{Python}$ and make our code available on GitHub: \url{https://github.com/AHdoc/HurwitzMoves_to_AlgIntersections}.

\begin{proof}[Proof of Proposition \ref{proposition::monodromy_genus2lef} - (\RNum{2})]
By Proposition \ref{proposition::monodromy_genus2lef} - (\RNum{1}) and Lemma \ref{lemma::Hurwitz 1234554321}, using a sequence of Hurwitz moves, one can always transform a global monodromy into the concatenation of sub-tuples
\begin{equation*}
\mathcal{A}_u
\bullet
(T_{\delta_{1,1}},\ldots,T_{\delta_{1,l_1}})^{p'-1}
\bullet
(T_{\delta_{2,1}},\ldots,T_{\delta_{2,l_2}})^{q'-1}
\bullet
(T_{\delta_{3,1}},\ldots,T_{\delta_{3,l_3}})^{r'-1}
\end{equation*}
with some $u\in\{1,2,3\}$ and non-negative integers $p'$, $q'$, $r'$
such that $p'+q'+r'-3=p+q+r-1$, 
$\hat{\Int}(\gamma_1,\delta_{i,j})\neq 0$ for $i=1,2,3$, $j=1,\ldots,l_i$
and $\hat{\Int}(\delta_{i,j},\delta_{i,k})\neq 0$ for
$i=1,2,3$,
$1\le j\neq k\le l_i$.
By Lemma \ref{lemma::ivanov}, 
making $N$ sufficiently large such that
$N-2>\Int(\delta_{i,j},\delta_{i',j'})$
for all $i=1,2,3$, $i'=1,2,3$, $1\le j\le l_i$, $1\le j'\le l_{i'}$, we have
\begin{equation*}
\Int(T_{\gamma_1}^N(\delta_{i,j}),\delta_{i',j'})
\ge
(N-2) \Int(\gamma_1,\delta_{i,j})\Int(\gamma_1,\delta_{i',j'}) - \Int(\delta_{i,j},\delta_{i',j'}) \ge 1.
\end{equation*}
Since $T_{\delta_{i,1}}\cdots T_{\delta_{i,l_i}}$ is central for $i=1,2,3$,
one can further transform the global monodromy into
\begin{align*}
\mathcal{A}_u \bullet
\prod_{j=1}^{p'}
(T_{T_{\gamma_1}^{jN}(\delta_{1,1})},\ldots,T_{T_{\gamma_1}^{jN}(\delta_{1,l_1})})
&\bullet
\prod_{j=p'+1}^{p'+q'}
(T_{T_{\gamma_1}^{jN}(\delta_{2,1})},\ldots,T_{T_{\gamma_1}^{jN}(\delta_{2,l_2})})\\
&\bullet
\prod_{j=p'+q'+1}^{p'+q'+r'}
(T_{T_{\gamma_1}^{jN}(\delta_{3,1})},\ldots,T_{T_{\gamma_1}^{jN}(\delta_{3,l_3})}).
\end{align*}
Applying Lemma \ref{lemma::Hurwitz 1234554321} again,
we replace $\mathcal{A}_u$ with $(T_{\delta_{u,1}},\ldots,T_{\delta_{u,l_u}})$,
as desired.
\end{proof}

\sloppy
\printbibliography[
    heading=bibintoc,
    title={References}
]

\end{document}